\documentclass[12pt]{amsart} 
\usepackage[mathscr]{eucal} 
\usepackage{amsmath,amsfonts} 
 \usepackage{hyperref}
 
\parskip=\smallskipamount 
 
\newtheorem{theorem}{Theorem}[section] 
\newtheorem{proposition}[theorem]{Proposition} 
\newtheorem{corollary}[theorem]{Corollary} 
\newtheorem{lemma}[theorem]{Lemma} 
 
\theoremstyle{definition} 
\newtheorem{definition}[theorem]{Definition} 
\newtheorem{example}[theorem]{Example}

\newtheorem{remark}[theorem]{Remark}

\makeatletter 
\@addtoreset{equation}{section} 
\makeatother

\newcommand{\CC}{{\mathbb C}} 
\newcommand{\FF}{{\mathbb F}}

\newcommand{\NN}{{\mathbb N}}

\newcommand{\RR}{{\mathbb R}} 
 
\newcommand{\cB}{{\mathcal B}} 
 
\newcommand{\cD}{{\mathcal D}} 
 
\newcommand{\cF}{{\mathcal F}} 
\newcommand{\cG}{{\mathcal G}} 
\newcommand{\cH}{{\mathcal H}} 
 
\newcommand{\cK}{{\mathcal K}} 
\newcommand{\cL}{{\mathcal L}} 
 
\newcommand{\cN}{{\mathcal N}}

\newcommand{\cR}{{\mathcal R}}

\newcommand{\cV}{{\mathcal V}} 

\newcommand{\cX}{{\mathcal X}} 
 
\newcommand{\bD}{\boldsymbol{D}}
\newcommand{\bG}{\boldsymbol{G}}
\newcommand{\bH}{\boldsymbol{H}}
\newcommand{\bK}{\boldsymbol{K}}
\newcommand{\bR}{\boldsymbol{R}}

\newcommand{\bU}{\boldsymbol{U}}
\newcommand{\bV}{\boldsymbol{V}}

\newcommand{\fK}{\mathfrak{K}}

\newcommand{\dom}{\operatorname{Dom}} 
 
\newcommand{\Ra}{\Rightarrow} 
\newcommand{\ran}{\operatorname{Ran}} 
 
\newcommand{\ra}{\rightarrow} 
\newcommand{\ol}{\overline}

\let\phi=\varphi 
\newcommand{\iac}{\mathrm{i}} 
\renewcommand{\ker}{\operatorname{Ker}}

\newcommand{\lin}{\operatorname{Lin}}

\newcommand{\Clos}{\operatorname{Clos}}
\newcommand{\Span}{\operatorname{Span}}
\renewcommand{\Re}{\operatorname{Re}}
\renewcommand{\Im}{\operatorname{Im}}

\newcommand{\nr}[1]{\vspace{0.1ex}\noindent\hspace*{12mm}\llap{\textup{(#1)}}} 
 
\title[Operator Valued Kernels]{Localised Operator Valued Kernels Invariant under Actions of $*$-Semigroupoids}
  
\dedicatory{To the memory of Heinz Langer, coauthor, mentor, and friend, who had one of the strongest influences on the 
development of operator theory in indefinite inner product spaces}
  
\author[A. Gheondea]{Aurelian Gheondea} 

\address{Institute of Mathematics of the Romanian Academy,
  Calea Grivi\c tei 21, 010702 Bucure\c sti, Rom\^ania, \emph{and}
  Department of Mathematics, Bilkent University, 06800 Bilkent, Ankara, 
      {\protect T\"urk\.iye}}
\email{A.Gheondea@imar.ro \textrm{and} aurelian@fen.bilkent.edu.tr} 

\begin{document}

\begin{abstract} We consider positive semidefinite kernels which have values given by bounded linear operators on certain 
bundles of Hilbert spaces and which are invariant under actions of $*$-semigroupoids. For these kernels, we prove that 
there exist generalised $*$-representations of the $*$-semigroupoids 
on the underlying reproducing kernel Hilbert spaces or, equivalently, on the 
underlying minimal linearisations, we characterise when the $*$-representations are performed 
by means of bounded operators and show that this always happens for inverse semigroupoids. Then, we consider 
Hermitian kernels which have values given by bounded linear operators on certain 
bundles of Hilbert spaces and which are invariant under actions of $*$-semigroupoids. Only those Hermitian kernels having 
certain boundedness properties can produce reproducing kernel Krein spaces but uniqueness is more complicated. However, for 
these kernels, generalised $*$-representations can be obtained. If $*$-representations with bounded linear operators 
are requested, then stronger boundedness conditions on the kernels are needed.
\end{abstract}

\keywords{ $*$-semigroupoid, $*$-representation, positive semidefinite kernel, Hermitian kernel, reproducing kernel, linearisation, 
Hilbert space, Krein space.}

\subjclass{Primary 46E22; Secondary  46C20, 18B40}
\maketitle

\section{Introduction}

Positive semidefinite scalar kernels have been considered since the beginning of the twentieth century, in connection with partial 
differential equations by S.~Zaremba \cite{Zaremba}, integral equations by J.~Mercer \cite{Mercer}, spaces of analytic 
functions by S.~Szeg\"o \cite{Szego}, S.~Bergman \cite{Bergman}, S.~Bochner \cite{Bochner1}, general analysis questions by
E.H.~Moore \cite{Moore}, stationary sequences by N.~Kolmogorov \cite{Kolmogorov}, until two general formalisations were 
performed by N.~Aronszajn \cite{Aronszajn} and L.~Schwartz \cite{Schwartz}. 
Then, this theory was extended to operator valued kernels, firstly by G.~Pedrick \cite{Pedrick} to locally convex spaces, then
by R.M.~Loynes \cite{Loynes1}, \cite{Loynes2}, \cite{Loynes3}, to vector valued Hilbert spaces, and so on. In the meantime,
this theory was used in many other domains of mathematics, 
such as: the theory of group representations by M.A.~Naimark \cite{Naimark1}, \cite{Naimark2},
 R.~Godement \cite{Godement}, S.~Bochner \cite{Bochner}, M.G.~Krein 
\cite{Krein1}, \cite{Krein2}, M.B.~Bekka and P.~de la Harpe \cite{Bekka}, function spaces by L.~de~Branges \cite{deBranges1},
canonical models in quantum mechanics by L.~de~Branges and J.~Rovnyak 
\cite{deBrangesRovnyak}, dilation theory by W.F.~Stinespring \cite{Stinespring}, B.~Sz.-Nagy \cite{BSzNagy}, 
D.E.~Evans and J.T.~Lewis \cite{EvansLewis}, T.~Constantinescu 
\cite{Constantinescu}, probability theory by K.R.~Parthasarthy \cite{Parthasarathy1}, \cite{Parthasarathy}, 
K.R.~Parthasarathy and K.~Schmidt \cite{ParthasarathySchmidt}, Hilbert $C^*$-modules by G.J.~Murphy \cite{Murphy}, and
Hilbert modules over locally $C^*$-algebras by the author \cite{Gheondea5}. 
A partial image of the big picture on the theory and the many
applications of the theory of positive semidefinite kernels can be seen in the monographs of 
A.~Berlinet and C.~Thomas-Agnan \cite{Berlinet}, V.R.~Paulsen and M.~Raghupathi \cite{PaulsenRaghupathi}, and
S.~Saitoh and Y.~Sawano \cite{SaitohSawano}. 

Reproducing kernel Pontryagin spaces first appeared at Sorjonen \cite{Sorjonen}.
A very fruitful research direction in the theory of kernels was opened by the seminal series of five articles 
of M.G.~Krein and H.~Langer \cite{KreinLanger1}, \cite{KreinLanger2},   \cite{KreinLanger3}, \cite{KreinLanger4},
 \cite{KreinLanger5}, in which Hermitian scalar kernels which are not semidefinite have been considered in connection with 
generalisations of the classical kernels of C.~Caratheodory, G.~Szeg\"o, and R.~Nevanlinna. These investigations triggered further research for the special case of Pontryagin spaces, where the existence 
and uniqueness are very similar to the case of Hilbert spaces, as can be seen in 
the monograph  of D.~Alpay, A.~Dijksma, J.~Rovnyak, H.~de Snoo \cite{AlpayDijksmaRovnyakDeSnoo}. 
For the more general case of Krein spaces, the situation is technically rather different. 
Here, it should be pointed out that, in the seminal paper of L.~Schwartz \cite{Schwartz}, reproducing kernel spaces, in his special 
interpretation of Hilbert spaces continuously contained in a given quasi-complete locally convex space, have been generalised to 
Krein spaces, which he called Hermitian spaces.
A related fact 
concerning realisations of Hermitian analytic kernels as operator colligations, also called transfer functions, in Krein spaces was 
studied by A.~Dijksma, H.~Langer, and H.S.V. de Snoo in \cite{DLS1}. D.~Alpay \cite{Alpay} 
showed that analyticity of the kernel is a sufficient condition for existence of the reproducing kernel Krein space, while, in general, 
as can be seen at L.~Schwartz \cite{Schwartz} and in the series of articles of 
T.~Constantinescu with the author \cite{ConstantinescuGheondea}, \cite{ConstantinescuGheondea0}, 
\cite{ConstantinescuGheondea1}, \cite{ConstantinescuGheondea2}, additional boundedness conditions are necessary. The 
result of Alpay was generalised to several complex variables in \cite{ConstantinescuGheondea3}.

In the last twenty years, positive semidefinite kernels and their reproducing kernel Hilbert spaces were 
found very useful in machine 
learning, as can be seen, for example, in the monographs of I.~Steinwart and C.~Christman \cite{SteinwartChristman} and that of  
B.~Sch\"olkopf and A.J. Smola \cite{ScholkopfSmola}. Then, 
operator valued kernels turned out to be extremely useful in learning theory \cite{Caponnetto}, \cite{Carmelli}, 
\cite{GheondeaTilki}, \cite{Hashimoto21},  \cite{Hashimoto22}, \cite{Hashimoto23a}, \cite{Hashimoto23b}, \cite{Hashimoto24},
\cite{Kadri}, \cite{Lambert}, \cite{Micchelli}, \cite{MBM},  among many other works. Moreover, recently, Hermitian kernels and their 
reproducing kernel Krein spaces have been found useful in machine learning \cite{OglicGartner}, \cite{Ong}.
It has been understood for some good time that kernels that are invariant under actions of certain 
semigroups or groups turned out to cover many other mathematical problems and were found to 
be useful in many applications, as pointed out in \cite{Bekka}, \cite{EvansLewis},
\cite{ParthasarathySchmidt}, \cite{Constantinescu}, \cite{ConstantinescuGheondea2}, \cite{Gheondea}, 
\cite{AyGh2015}, \cite{AyGh2017}, \cite{AyGh2019}. 

In view of the recent interest in the theory of groupoids in functional analysis, which was triggered by the investigations of
J.~Cuntz and W.~Krieger \cite{CuntzKrieger}, operator valued positive semidefinite maps on $*$-semigroupoids have been 
considered by B.~Udrea and the author in \cite{GheondeaUdrea}. It is known that operator valued kernels that are invariant 
under left actions of $*$-semigroups are models of a system that shows certain symmetry properties and, because of that, it 
contains many other problems: operator valued completely positive maps on $C^*$-algebras,
operator valued maps on groups which are positive semidefinite, semispectral measure, and many others.  It is expected that this 
feature will be preserved when $*$-semigroupoids are used instead of $*$-semigroups. From this point of view, 
the concept of partial dynamical system of  R.~Exel \cite{Exel} might be related with actions of semigroupoids. Another motivation 
for this research comes from the recent article \cite{GheondeaTilki} of the author and 
C.~Tilki in which it is pointed out that certain applications of operator valued kernels to machine learning require that the 
operator valued kernels be localised, that is, given a base set $X$ of inputs, to each $x\in X$ one should consider a Hilbert space 
$\cH_x$ that depends on $x$. This setting makes it natural to consider Hilbert space bundles over the input set $X$ which, in 
general, are not trivial in the sense of vector bundles and then, symmetry properties of the system require
partial actions, hence 
semigroupoids become the necessary tool.

In this article, we consider positive semidefinite kernels which have values given by bounded linear operators on certain 
bundles of Hilbert spaces and which are invariant under actions of $*$-semigroupoids. Many of the results that we obtain here 
are generalisations of some of the results obtained by T.~Constantinescu and the author in \cite{ConstantinescuGheondea1} 
and \cite{ConstantinescuGheondea2}, but there are significant differences. Because a left action of a 
$*$-semigroupoid $\Gamma$ on the set $X$ splits the set into a partition, kernels on the set $X$ are also split into a bundle of 
kernels and, consequently, positive semidefiniteness becomes partial.
For these kernels, we prove that there exist generalised, in the sense that the operators on the representation spaces 
are unbounded, $*$-representations of the $*$-semigroupoids 
on the underlying bundles of reproducing kernel Hilbert spaces, cf.\ Theorem~\ref{t:invariantu}, or, equivalently, on the 
underlying bundles of minimal linearisations, cf.\ Theorem~\ref{t:invariantul}. 
Also, we characterise when the $*$-representations are performed 
by means of bounded operators, cf.\ Theorem~\ref{t:invariantb}, where the class of operator valued kernels of bounded shift type 
shows its importance. Then, we show that in the case of inverse semigroupoids, hence in the particular case of groupoids, the 
boundedness condition is automatic, cf.\ Corollary~\ref{c:invsem}. 

In Section~\ref{s:hk}, we consider partial Hermitian kernels which have values given by bounded linear operators on certain 
bundles of Hilbert spaces and investigate the existence of Krein space linearisations, cf.\ Theorem~\ref{t:ksl}, 
and reproducing kernel Krein spaces, cf.\ Theorem~\ref{t:rkks}. Then we consider operator valued Hermitian kernels
which are invariant under actions of $*$-semigroupoids. Compared to  the 
case of positive semidefinite kernels, the difficulties that we encounter now become bigger: 
only those Hermitian kernels having 
certain boundedness properties can produce reproducing kernel Krein spaces and uniqueness is much 
more complicated, cf.\ Theorem~\ref{t:kslp}. However, as 
in the case of positive semidefinite kernels, for these Hermitian kernels, generalised $*$-representations can be obtained,
cf.\ Theorem~\ref{t:invariantuk}. 
If $*$-representations with bounded linear operators are requested, then stronger boundedness conditions, in terms of 
majorisations of the Hermitian kernels with certain partially positive semidefinite kernels of bounded shift type, 
are needed, cf.\ Theorem~\ref{t:invariantukbp}.
In the special case when the majorisation of the Hermitian kernel is performed with  partially positive semidefinite kernels of 
bounded shift type which are also invariant, the corresponding Krein space linearisations and 
reproducing kernel Krein spaces have special joint fundamental reducibility properties, cf.\
Theorem~\ref{t:invariantukb}.

Although the investigations in \cite{GheondeaUdrea} triggered our investigations from this article, the generalisations that we 
obtain to localised operator valued partially positive semidefinite kernels, and which are partially invariant under left actions of 
$*$-semigroupoids, do not fully cover the case of localised partially positive semidefinite maps on $*$-semigroupoids from
\cite{GheondeaUdrea}, because of many differences. Each of the two approaches covers some applications, with advantages 
and disadvantages, when compared with each other. For example, the results proven in this article have the 
advantage that they do not require that the $*$-semigroupoids have units, but the aggregation feature is missing. 
From this point of view, it remains to study in another 
paper the localised  partially Hermitian maps on $*$-semigroupoids and investigate when they produce $*$-representations of 
the underlying semigroupoids on bundles of Krein spaces and of which kind.

In order to make this article selfcontained and because here we use many concepts and results 
on localised operator valued kernels, $*$-semigroupoids and 
their representations, as well as Krein spaces and their linear operators, we included subsections that recall the necessary 
results and make clear the concepts and notation.

\section{Positive Semidefinite Operator Valued Kernels}\label{s:psovk}

\subsection{Notation and Basic Definitions.}\label{ss:nbd}
Let $X$ be a nonempty set and $\{\cH_x\}_{x\in X}$ be a bundle of
Hilbert spaces over $\FF$, either $\RR$ or $\CC$,
with base $X$, that is, $\cH_x$ is a Hilbert space over 
$\FF$, for any $x\in X$. Equivalently, we can use the notation
\begin{equation}\label{e:xast}
X\ast \bH :=\{(x,h)\mid x\in X,\ h\in \cH_x\}
\end{equation}
and let $\pi\colon X\ast \bH\ra X$ denote the canonical projection $\pi(x,h)=x$ for all $(x,h)\in X\ast \bH$. The 
pair $(X\ast \bH,\pi)$, or simply $X\ast \bH$, is called a \emph{Hilbert space bundle over $X$}. Technically, a
vector bundle of Hilbert spaces $X\ast \bH$ is same with the \emph{disjoint  union} of the family of Hilbert 
spaces $\bH$, denoted by $\bigsqcup_{x\in X}\cH_x$. This means that we identify the component $\cH_x$ of the 
disjoint union with the fibre $\{x\}\times \cH_x$ from $X\ast \bH$.
If $\cH_x=\cH$ for all $x\in X$ then $X\ast \bH=X\times \cH$ and, in this case, the bundle $X\times \cH$ is called \emph{trivial}.

A \emph{section}, or a \emph{vector field}, of the bundle, denoted by $f\colon X\ra X\ast \bH$, is a right inverse 
of $\pi$, that is, $\pi\circ f=\mathrm{Id}_X$. This means that $f(x)\in \cH_x$ for all $x\in X$. We will use alternatively 
$f_x=f(x)$, depending on the context and in order to simplify and/or clarify different aspects.
Technically, the 
collection of all sections of the 
bundle $X\ast \bH$, denoted by $\cF_\FF(X;\bH)$, coincides with the product of the family of Hilbert spaces $\bH$, 
that is, $\prod_{x\in X}\cH_x$, which is the collection of all maps $f\colon X\ra  \bigsqcup_{x\in X}\cH_x$ such 
that $f(x)\in \cH_x$ for all $x\in X$. Here we again identify $\cH_x$ with the fibre $\{x\}\times \cH_x$. This is important 
since $\cF_\FF(X;\bH)$ is naturally organised as a vector space, with respect to pointwise addition and multiplication 
with scalars. With respect to this, we identify $f\in \cF_\FF(X;\bH)$ with $(f_x)_{x\in X}$, where $f_x\in \cH_x$ for all 
$x\in X$. With this identification, $\cF_\FF(X;\bH)$ has the product topology, more precisely, for each $x\in X$ 
consider the seminorm
\begin{equation}\label{e:pexef}
p_x(f)=\|f(x)\|_{\cH_x},\quad f\in \cF_\FF(X;\bH).
\end{equation}
With respect to the family of seminorms $\{p_x\mid x\in X\}$, $\cF_\FF(X;\bH)$ becomes a
complete locally convex space. This topology coincides with the topology of pointwise convergence of nets in $\cF_\FF(X;\bH)$.

An \emph{$\bH$-operator valued 
kernel} $K$ is a mapping defined on $X\times X$ such that 
$K(y,x)\in\cB(\cH_x,\cH_y)$ for all $x,y\in X$. We denote by $\fK(X;\bH)$ the collection of all $\bH$-operator
valued kernels on $X$ and it is clear that $\fK(X;\bH)$ is a  vector space over $\FF$.
Given $K\in\fK(X;\bH)$, the adjoint kernel $K^*$ is defined 
by $K^*(x,y)=K(y,x)^*$, for all $x,y\in X$. Clearly, $K^*\in\fK(X;\bH)$. The kernel $K$ is called \emph{Hermitian} if 
$K=K^*$. Any kernel $K$ is a linear combination of two Hermitian kernels, more precisely, letting
\begin{equation}\label{e:rem}\Re(K):=(K+K^*)/2,\quad \Im(K):=(K-K^*)/2\iac,\end{equation}
we have
\begin{equation}\label{e:remek}
K=\Re(K)+\iac \Im(K).
\end{equation}
It is easy to see that $K\mapsto K^*$ is an involution, that is, it is conjugate linear and involutive.
In this way, $\fK(X;\bH)$ is a $*$-vector space. We denote by $\fK^h(X;\bH)$ the real vector space of all 
Hermitian $\bH$-operator valued kernels.

For each $x\in X$ and $h\in\cH_x$, we consider the section 
$\widehat h\in\cF_\FF(X;\bH)$, defined by
\begin{equation}\label{e:deltax}
(\widehat h)(y)=\begin{cases}  h,& \mbox{ if }y=x,\\ 0_{\cH_y},& \mbox{ otherwise.}
\end{cases}
\end{equation}
With a slight abuse of notation, for any $x\in X$, letting $\delta_x\colon X\ra \FF$ be the function $\delta_x(x)=1$ and 
$\delta_x(y)=0$ for $y\neq x$, for any $h\in \cH_x$, we can write
\begin{equation}\label{e:delta}
\widehat h=\delta_x h.
\end{equation}
This notation is useful under certain circumstances since it emphasises the role of $x$.
Then, for any $f\in\cF_\FF(X;\bH)$, we have
\begin{equation}\label{e:fesum}
f=\sum_{x\in X}  \widehat f_x=\sum_{x\in X} \delta_x f_x,
\end{equation}
where $f_x:=f(x)$ for all $x\in X$. Clearly, the convergence in \eqref{e:fesum} is pointwise.

Let $\cF_\FF^0(X;\bH)$ be the vector subspace consisting of all $f\in\cF_\FF(X;\bH)$ with 
finite support. Clearly, any section of type $\widehat h$ belongs to $\cF_\FF^0(X;\bH)$ and, for 
any $f\in\cF_\FF^0(X;\bH)$ there exists unique distinct elements $x_1,\ldots,x_n\in X$ and 
$h_i=f(x_i)\in\cH_{x_i}$, $i=1,\ldots,n$, such that 
\begin{equation*} f=\sum_{i=1}^n \widehat h_i=\sum_{i=1}^n \delta_{x_i}h_i.
\end{equation*}
 An inner product $\langle\cdot,\cdot\rangle_0\colon \cF_\FF^0(X;\bH)\times 
\cF_\FF^0(X;\bH)\ra \FF$ can be defined by
\begin{equation}\label{e:ipzero} \langle f,g\rangle_0=\sum_{x\in X}\langle f(x),g(x)
\rangle_{\cH_x},
\quad f,g\in \cF_\FF^0(X;\bH).
\end{equation}
In addition, 
let us observe that the sum in \eqref{e:ipzero} makes sense in the more general case when $f,g\in\cF_\FF(X;\bH)$ 
and at least one of $f$ or $g$ has finite support, the other can be arbitrary.  

Associated to the kernel $K\in\fK(X;\bH)$ there is a 
sesquilinear / bilinear form $\langle \cdot,\cdot\rangle_K\colon \cF_\FF^0(X;\bH)\times 
\cF_\FF^0(X;\bH)\rightarrow \FF$ defined by
\begin{equation}\label{e:ipk} \langle f,g\rangle_K=\sum_{x,y\in X}\langle K(y,x) 
f(x),g(y)\rangle_{\cH_y},\quad f,g\in \cF_\FF^0(X;\bH),
\end{equation}
that is, $\langle\cdot,\cdot\rangle_K$ is linear in the first variable 
and conjugate linear / linear in the second variable. Also, the form $\langle\cdot,\cdot
\rangle_K$ is Hermitian, that is, $\langle f,g\rangle_K=\overline{\langle g,f\rangle_K}$ for all 
$f,g\in \cF_\FF^0(X;\bH)$, if and only if the kernel $K$ is Hermitian. 

A \emph{convolution operator} $C_K\colon \cF_\FF^0(X;\bH)\ra \cF_\FF(X;\bH)$ can be defined by
\begin{equation}\label{e:conv} (C_Kf)(y)=\sum_{x\in X} K(y,x)f(x),\quad f\in
\cF_\FF^0(X;\bH),\ y\in X.
\end{equation}
Clearly, $C_K$ is a linear operator and, for all $x,y\in X$ and all $h\in\cH_x$, we have
\begin{equation}\label{e:cekaw}
(C_K \widehat h)(y)=(C_K \delta_x h)(y)= \sum_{z\in X} K(y,z)\delta_x(z) h=K(y,x)h.
\end{equation}
Introducing the notation, for $x\in X$, $K_x\colon X\ra \cF_\FF(X;\bH)$, $K_x(y):=K(y,x)$, for all $y\in X$, the previous equality 
can be expressed as
\begin{equation}\label{e:cekawi}
C_K \widehat h=C_K \delta_x h=K_xh,\quad x\in X,\ h\in \cH_x,
\end{equation}
consequently,
\begin{equation}\label{e:racek}
\ran(C_K)=\lin \{K_x\mid x\in X\}.
\end{equation}
With notation as in \eqref{e:ipzero} 
and \eqref{e:ipk}, we have
\begin{equation}\label{e:convzero} \langle C_Kf,g\rangle_0=\langle f,g\rangle_K,
\quad f,g\in \cF_\FF^0(X;\bH).
\end{equation}

By definition, the kernel $K$ is \emph{positive semidefinite} if it is Hermitian and the sesquilinear form 
$\langle\cdot,\cdot\rangle_K$ 
is nonnegative, that is, if $\langle f,f\rangle_K\geq 0$ for all 
$f\in\cF_\FF^0(X;\bH)$, equivalently, if for all $n\in \NN$,
all $x_1,\ldots, x_n\in X$, and all $h_1\in \cH_{x_1},\ldots, h_n\in \cH_{x_n}$, 
we have
\begin{equation}\label{e:psk} \sum_{i,j=1}^n \langle K(x_j,x_i)h_i,h_j
\rangle_{\cH_{x_j}} \geq 0.
\end{equation}
Another equivalent way of expressing \eqref{e:psk} is to say that the operator block 
matrix $[K(x_j,x_i)]_{i,j=1}^n$, when 
viewed as a bounded linear operator acting in the orthogonal 
direct Hilbert sum $\cH_{x_1}\oplus \cdots\oplus 
\cH_{x_n}$, is a nonnegative operator. On the other hand, the kernel $K$ is 
positive semidefinite if and only if the convolution operator, as defined in 
\eqref{e:conv}, is nonnegative on the inner product space 
$(\cF_\FF^0(X;\bH);\langle\cdot,\cdot\rangle_0)$, more precisely,
\begin{equation}
\langle C_Kf,f\rangle_0\geq 0,\quad f\in\cF_\FF^0(X;\bH).
\end{equation}

In case $\FF=\CC$, the fact that inequality \eqref{e:psk} holds for $n=2$ implies that $K$ is Hermitian, but the statement is not 
true for $\FF=\RR$, in view of the existence of $2\times 2$ matrices, for example, $\begin{bmatrix} 1 & 1 \\ 0 & 1\end{bmatrix}$,
which are positive but not Hermitian.
Because of that, in that case, a positive semidefinite kernel will always be 
assumed to be Hermitian. The collection of all
positive semidefinite $\bH$-operator valued kernels on $X$ is denoted by $\fK^+(X;\bH)$ and it is easy
to see that $\fK^+(X;\bH)$ is a strict convex cone of the $*$-vector space $\fK(X;\bH)$. Then, one considers the order $\leq$ 
induced by the cone $\fK^+(X;\bH)$ on the real vector space $\fK^h(X;\bH)$, that is, for $K,L\in \fK^h(X;\bH)$, we say that $L\leq K$ 
if $K-L\in \fK^+(X;\bH)$. With this notation, if $K\in \fK^h(X;\bH)$, then $K\in \fK^+(X;\bH)$ is equivalent with $K\geq 0$.

\subsection{Hilbert Space Linearisations.}\label{ss:hsl} Given an arbitrary bundle of Hilbert 
spaces $X\ast \bH$ and an $\bH$-operator valued kernel $K$, a \emph{Hilbert space
linearisation}, or a 
\emph{Hilbert space Kolmogorov decomposition}, of $K$ is, by definition, a pair $(\cK;V)$ subject to
the following conditions.
\begin{itemize}
\item[(kd1)] $\cK$ is a Hilbert space over $\FF$.
\item[(kd2)] $V=\{V(x)\}_{x\in X}$ is an 
operator bundle such that $V(x)\in \cB(\cH_x,\cK)$ for all $x\in X$.
\item[(kd3)] $K(x,y)=V(x)^* V(y)$ for all $x,y\in X$.
\end{itemize}
A linearisation $(\cK;V)$ is called \emph{minimal} if
\begin{itemize}
\item[(kd4)] $\cK$ is the closed span of $\{V(x)\cH_x\mid x\in X\}$.
\end{itemize}

The following theorem is Theorem 2 in \cite{GheondeaTilki} and 
a generalised version of the classical result  of A.N.~Kolmogorov \cite{Kolmogorov} 
and its many other partial generalisations, such
as K.R.~Parthasarathy and K.~Schmidt \cite{ParthasarathySchmidt}, D.J.~Evans and J.T.~Lewis 
\cite{EvansLewis}. This is also a special case of Theorem~3.1 from T.~Constantinescu and A.~Gheondea 
\cite{ConstantinescuGheondea1}. 

\begin{theorem}\label{t:kolmogorov} 
Given an arbitrary bundle of Hilbert 
spaces $X\ast \bH$ and an $\bH$-operator valued kernel $K$, the 
following assertions are equivalent.
\begin{itemize}
\item[(a)] $K$ is positive semidefinite.
\item[(b)] $K$ has a Hilbert space linearisation.
\end{itemize}
In addition, if $K$ is positive semidefinite then a minimal Hilbert space linearisation 
$(\cK;V)$ exists and it is unique, modulo unitary equivalence, that 
is, for any other minimal Hilbert space linearisation $(\cK';V')$ of $K$ there exists
a unitary operator $U\colon \cK'\ra \cK$ such that $V(x)=UV'(x)$, for all $x\in X$.
\end{theorem}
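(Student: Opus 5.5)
The proof follows the classical Kolmogorov construction, adapted to the bundle $X\ast\bH$.

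\emph{(b)$\Ra$(a).} Suppose $(\cK;V)$ is a Hilbert space linearisation. Then $K(x,y)^*=(V(x)^*V(y))^*=V(y)^*V(x)=K(y,x)$, so $K$ is Hermitian. Moreover, for $f=\sum_{i=1}^n\delta_{x_i}h_i\in\cF^0_\FF(X;\bH)$ we have
\begin{equation*}
\langle f,f\rangle_K=\sum_{i,j=1}^n\langle V(x_j)^*V(x_i)h_i,h_j\rangle_{\cH_{x_j}}=\Bigl\|\sum_{i=1}^n V(x_i)h_i\Bigr\|_\cK^2\geq 0 .
\end{equation*}
Hence $K\in\fK^+(X;\bH)$.

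\emph{(a)$\Ra$(b) and minimality.} Assume $K\geq 0$. Then $\langle\cdot,\cdot\rangle_K$ is a nonnegative Hermitian form on $\cF^0_\FF(X;\bH)$, and the Cauchy--Schwarz inequality holds for it. Consequently the isotropic set $\cN:=\{f\mid \langle f,f\rangle_K=0\}$ coincides with $\{f\mid \langle f,g\rangle_K=0 \text{ for all } g\}$, and in particular is a subspace.

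Let $\cK$ be the Hilbert space completion of the quotient $\cF^0_\FF(X;\bH)/\cN$, and define $V(x)h:=\widehat h+\cN$ for $h\in\cH_x$. Each $V(x)$ is bounded, since
\begin{equation*}
\|V(x)h\|^2=\langle K(x,x)h,h\rangle\leq\|K(x,x)\|\,\|h\|^2 .
\end{equation*}
The identity $\langle V(y)k,V(x)h\rangle_\cK=\langle K(x,y)k,h\rangle$ for $k\in\cH_y$, $h\in\cH_x$ gives $V(x)^*V(y)=K(x,y)$. Minimality holds because the classes of the sections $\widehat h$ span the dense quotient.

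An equivalent construction works directly with the range of the convolution operator: take $\ran(C_K)=\lin\{K_xh\}$ with inner product $\langle C_Kf,C_Kg\rangle:=\langle f,g\rangle_K$, complete it, and set $V(x)h=K_xh$. This produces the reproducing kernel space, but the quotient construction is enough here.

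\emph{Uniqueness.} Let $(\cK';V')$ be another minimal linearisation. Define $U_0$ on $\lin\{V'(x)\cH_x\}$ by
\begin{equation*}
U_0\sum_i V'(x_i)h_i:=\sum_i V(x_i)h_i .
\end{equation*}
Both sides have norm squared equal to $\sum_{i,j}\langle K(x_j,x_i)h_i,h_j\rangle$. So $U_0$ is well defined, since a vanishing left side forces a vanishing right side, and it is isometric. By (kd4) for both linearisations, $U_0$ has dense domain and dense range. It therefore extends to a unitary $U\colon\cK'\ra\cK$ with $UV'(x)=V(x)$ for all $x\in X$.

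The only delicate step is well-definedness of $U_0$, equivalently the identification of $\cN$ as a subspace via Cauchy--Schwarz. This is where Hermitian symmetry is needed; in the real case it is part of the definition of positive semidefiniteness.
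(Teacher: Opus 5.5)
Your proposal is correct and takes the paper's approach. The paper's explicit argument for (a)$\Ra$(b) is your quotient--completion of $(\cF_\FF^0(X;\bH);\langle\cdot,\cdot\rangle_K)$, with $V(x)h=\widehat h+\cN_K$ and boundedness from $\|V(x)h\|^2=\langle K(x,x)h,h\rangle_{\cH_x}$; your direct check of (b)$\Ra$(a) and your isometry-with-dense-range uniqueness argument match what the paper cites from the literature and carries out, in localised form, in the proof of Theorem~\ref{t:invariantul}. One small correction to your closing remark: well-definedness of $U_0$ follows from the norm identity in the Hilbert spaces $\cK'$ and $\cK$ and needs no Cauchy--Schwarz for $\langle\cdot,\cdot\rangle_K$, since that inequality is used only to see that $\cN$ is a subspace in the existence construction.
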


It is useful to recall the construction of the minimal linearisation $(\cK;V)$, provided that the kernel $K$ is supposed positive 
semidefinite, following \cite{GheondeaTilki}.  
First, we recall a formalisation of the quotient completion to a Hilbert space of an $\FF$-vector space 
$\cV$ with respect to a given nonnegative sesquilinear form $\cV\times\cV\ni (u,v)\mapsto q(u,v)\in\FF$, as 
follows, which is classical, see \cite{ConstantinescuGheondea1}, \cite{GheondeaTilki}. A pair $(\cH;\Pi)$ is called a \emph{Hilbert space induced} by $(\cV;q)$ if:
\begin{itemize}
\item[(ihs1)] $\cH$ is a Hilbert space.
\item[(ihs2)] $\Pi\colon \cV\ra \cH$ is a linear operator with dense range.
\item[(ihs3)] $q(u,v)=\langle \Pi u,\Pi v\rangle_\cH$, for all $u,v\in\cV$.
\end{itemize} 
Such an induced Hilbert space always exists and is unique, up to a unitary operator. More precisely, we
will use the following construction. Consider the vector subspace of $\cV$ defined by 
\begin{equation}\label{e:nequ}
\cN_q:=\{u\in\cV\mid q(u,u)=0\}=\{u\in\cV\mid q(u,v)=0\mbox{ for all }v\in\cV\},
\end{equation}
where the equality holds due to the Schwarz Inequality for $q$,
and then consider the quotient vector space $\cV/\cN_q$. Letting
\begin{equation}
\widetilde q(u+\cN_q,v+\cN_q):= q(u,v),\quad u,v\in \cV,
\end{equation}
we have a pre-Hilbert space $(\cV/\cN_q;\widetilde q)$ that can be completed to a Hilbert space 
$(\cH_q;\langle\cdot,\cdot\rangle_\cH)$. Letting $\Pi_q\colon \cV\ra \cH_q$ be defined by
\begin{equation}
\Pi_q u:=u+\cN_q\in \cV/\cN_q \subseteq \cH_q,\quad u\in \cV,
\end{equation} 
it is easy to see that $(\cH_q,\Pi_q)$ is a Hilbert space induced by $(\cV;q)$.

We will need the construction of a certain minimal Hilbert space linearisation.

\begin{proof}[Proof of the implication (a)$\Ra$(b) in Theorem~\ref{t:kolmogorov}]
Let us assume the $\bH$-op\-er\-a\-tor valued kernel $K$ is positive semidefinite (and symmetric, in the real case).
We consider the vector space $\cF_\FF^0(X;\bH)$ of $\bH$-valued sections
with finite support and the positive semidefinite, Hermitian, sesquilinear form
$\langle \cdot,\cdot\rangle_K$ defined as in \eqref{e:ipk}. We consider 
\begin{align}\label{e:cenek}
\cN_K & =\{f\in\cF_\FF^0(X;\bH)\mid \langle f,f\rangle_K=0\} \\
& =\{f\in\cF_\FF^0(X;\bH)\mid \langle f,g\rangle_K=0
\mbox{ for all }g\in F_0(X;\bH)\},\nonumber \end{align} then
let $(\cH_K;\Pi_K)$ be the Hilbert space induced by $(\cF_\FF^0(X;\bH);\langle\cdot,\cdot\rangle_K)$,
and let $\cK:=\cH_K$.
For each $x\in X$ let $V(x)\colon \cH_x\ra\cK$ be the operator defined by
\begin{equation}\label{e:vexeh}
V(x)h=\Pi_{K}(\widehat h)= \widehat h+\cN_K=\delta_x h+\cN_K\in\cK,\quad h\in\cH_x,\end{equation}
with notation as in \eqref{e:deltax} and \eqref{e:delta}. Then, for all $x,y\in X$ and all $h\in\cH_x$ and 
$g\in\cH_y$, in view of \eqref{e:cekaw} we have
\begin{align*} \langle V(x)h,V(y)g\rangle_K &  =\langle  \widehat h+\cN,\widehat g+\cN \rangle_K 
= \langle C_K \widehat h,\widehat g\rangle_0\\
& = \langle K_x h,\widehat g\rangle_0
 = \langle K(y,x)h,g\rangle_{\cH_y},
\end{align*} hence
$K(y,x)=V(y)^*V(x)$ for all $x,y\in X$. 

Then, for all $x\in X$ and all $h\in\cH_x$, we have
\begin{equation*} \langle V(x) h,V(x)h\rangle_K=
\langle K(x,x)h,h\rangle_{\cH_x}\leq \|K(x,x)\| \|h\|_{\cH_x}^2,
\end{equation*} hence $V(x)$ is bounded for all $x\in X$. 
Note that, by the very definition, $\cK$ is the closed span of $\{V(x)\cH_x\mid x\in X\}$.
All these show that $(\cK;V)$ is a minimal Hilbert space linearisation of the 
$\bH$-kernel $K$.
\end{proof}

Due to the uniqueness part in the previous theorem, for any $K\in\fK^+(X;\bH)$, we 
denote by $(\cK_K;V_K)$
the minimal Hilbert space linearisation of $K$, as constructed during the proof of the 
implication (a)$\Ra$(b).

\begin{remark} Let $(\cK;V)$ be a Hilbert space linearisation of the positive semidefinite 
$\bH$-kernel $K$, with notation as before. If $(\cK;V)$ is not minimal, one can consider 
the Hilbert space
\begin{equation*}
\cK_0:=\Clos\Span\{V(x)\cH_x\mid x\in X\},
\end{equation*}
and observe that, replacing $\cK$ with $\cK_0$ and, for each $x\in X$, replacing 
$V(x)$ with $V_0(x):=P_{\cK_0}V(x)$, we obtain a minimal linearisation $(\cK_0;V_0)$  
of $K$.\end{remark}

\subsection{Reproducing Kernel Hilbert Spaces.}\label{ss:rkhs} Let $X$ be a nonempty set and 
$X\ast \bH$ be a bundle of Hilbert spaces over $\FF$.
Given an $\bH$-operator valued kernel $K$, 
a  \emph{reproducing kernel Hilbert space} associated to $K$ is, by definition, a Hilbert space 
$\cR\subseteq \cF_\FF(X;\bH)$ subject to the following conditions.
\begin{itemize}
\item[(rk1)] $\cR$ is a subspace of $\cF_\FF(X;\bH)$, with all induced algebraic operations.
\item[(rk2)] For all $x\in X$ and $h\in\cH_x$, the section
$K_xh:=K(\cdot,x)h$ belongs to $\cR$. 
\item[(rk3)] For all $f\in\cR$ we have $\langle f(x),h\rangle_{\cH_x}
=\langle f,K_x h\rangle_\cR$, for all $x\in X$ and all $h\in\cH_x$.
\end{itemize}

Property (rk3) is called the \emph{reproducing property}.

\begin{remark}
In view of property (rk2), let us observe that for each $x\in X$, we have a linear operator $K_x\colon \cH_x\ra \cR$. This operator 
is bounded. Indeed, for each $h\in\cH_x$, in view of (rk3) we have
\begin{equation*}
\|K_xh\|_\cR^2= \langle K_xh,K_xh\rangle_\cR=\langle K(x,x)h,h\rangle_{\cH_x}\leq \|K(x,x)\| \|h\|_{\cH_x}^2,
\end{equation*}
where $\|K(x,x)\|$ is the operator norm of $K(x,x)\in\cB(\cH_x)$.
\end{remark}

\begin{proposition}\label{p:rkhs}
Let $\cR$ be a reproducing kernel Hilbert space associated to the $\bH$-operator valued kernel $K$, defined before. Then,
\begin{itemize}
\item[(a)] $K(x,y)=K_x^* K_y$ for all $x,y\in X$, that is, 
$\langle K(x,y)k,h\rangle_{\cH_x}=\langle K_yk,K_xh\rangle_\cR$ for all $x,y\in X$, $h\in\cH_x$, $k\in\cH_y$.
\item[(b)] $K$ is Hermitian/symmetric and positive semidefinite.
\item[(c)] $\{K_xh\mid x\in X,\ h\in\cH_x\}$ is total in $\cR$.
\item[(d)] $K$ is uniquely associated to $\cR$.
\end{itemize}
\end{proposition}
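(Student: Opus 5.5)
The whole proof rests on the reproducing property (rk3), applied to the sections $K_yk\in\cR$ provided by (rk2). I will prove the four items in order.

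For (a), fix $x,y\in X$, $h\in\cH_x$ and $k\in\cH_y$. Apply (rk3) with $f:=K_yk$, evaluated at the point $x$ and tested against $h$. Since $(K_yk)(x)=K(x,y)k$, this gives
\begin{equation*}
\langle K(x,y)k,h\rangle_{\cH_x}=\langle K_yk,K_xh\rangle_\cR=\langle K_x^*K_yk,h\rangle_{\cH_x}.
\end{equation*}
Here $K_x\colon\cH_x\ra\cR$ is bounded by the preceding Remark, so its adjoint $K_x^*$ makes sense. As $h$ and $k$ are arbitrary, $K(x,y)=K_x^*K_y$.

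For (b), by (a) the pair $(\cR;\{K_x\}_{x\in X})$ satisfies (kd1)--(kd3), so it is a Hilbert space linearisation of $K$. Theorem~\ref{t:kolmogorov} then gives that $K$ is positive semidefinite. Hermitian symmetry also follows directly from (a): $K(y,x)^*=(K_y^*K_x)^*=K_x^*K_y=K(x,y)$. This settles the real case as well, where the Hermitian property has to be checked separately.

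For (c), let $f\in\cR$ be orthogonal to every $K_xh$. By (rk3), $\langle f(x),h\rangle_{\cH_x}=0$ for all $x\in X$ and all $h\in\cH_x$. Hence $f(x)=0$ for every $x$, so $f=0$ as a section. Therefore the family $\{K_xh\}$ is total in $\cR$.

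For (d), suppose $\cR$ is also a reproducing kernel Hilbert space for another kernel $L$. Applying (rk3) for both kernels, for every $f\in\cR$ we get $\langle f,K_xh-L_xh\rangle_\cR=0$. Taking $f=K_xh-L_xh$, which lies in $\cR$ by (rk2) for both kernels, gives $K_xh=L_xh$. Thus $K(y,x)h=L(y,x)h$ for all $y$, so $K=L$.

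No step is a genuine obstacle. The only point that needs care is in (a): there we rely on the boundedness of $K_x$ from the Remark so that the adjoint $K_x^*$ is defined, and we keep track of the fibre spaces in which each inner product is taken.
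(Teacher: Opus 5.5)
Your proof is correct and matches the paper's for (a) and (c). In (b) you invoke the easy direction of Theorem~\ref{t:kolmogorov}, where the paper instead writes out the same computation $\sum_{i,j}\langle K(x_i,x_j)h_j,h_i\rangle_{\cH_{x_i}}=\|\sum_j K_{x_j}h_j\|_\cR^2\geq 0$. The only real variation is in (d): you obtain $K_xh=L_xh$ directly from $K_xh-L_xh\perp\cR$, whereas the paper chains the two reproducing properties and then needs the Hermitian symmetry of $L$ from (b), so your route is slightly more self-contained.
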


\begin{proof} (a) This is straightforward from the reproducing property and the 
definition of $K_x$, more precisely, for all $x,y\in X$, $h\in\cH_x$, and $k\in\cH_y$, we have
\begin{equation*}\langle K_yk,K_xh\rangle_\cR=\langle K_y(x)k,h\rangle_{\cH_x}=\langle K(x,y)k,h\rangle_{\cH_x}.
\end{equation*}

(b) First, by (a), for all $x,y\in X$, $h\in\cH_x$, and $k\in\cH_y$, we have
\begin{align*}
\langle K(x,y)k,h\rangle_{\cH_x} & =\langle K_yk,K_xh\rangle_\cR=\overline{\langle K_xh,K_yk\rangle_\cR}=\overline{\langle 
K(y,x)h,k\rangle_{\cH_y}}\\
& =\overline{\langle h,K(y,x)^*k\rangle_{\cH_x}}=\langle K(y,x)^*k,h\rangle_{\cH_x}.
\end{align*}
Then, for arbitrary $n\in\NN$, $x_1,\ldots,x_n\in X$ and $h_i\in\cH_{x_i}$, for $i=1,\ldots,n$, we have
\begin{align*}
\sum_{i,j=1}^n \langle K(x_i,x_j)h_j,h_i\rangle_{\cH_{x_i}} & = \sum_{i,j=1}^n \langle K_{x_j}h_j,K_{x_i}h_i\rangle_\cR 
= \langle \sum_{j=1}^n K_{x_j}h_j,\sum_{i=1}^n  K_{x_i}h_i\rangle_\cR \\
& = \| \sum_{j=1}^n K_{x_j}h_j\|_\cR^2\geq 0.
\end{align*}

(c) Let $f\in \cR$ be such that $f\perp K_xh$, for all $x\in X$ and $h\in\cH_x$. 
By the reproducing property, for all $x\in X$ and $h\in\cH_x$, we have
$f(x)h=\langle f,K_xh\rangle_\cR=0$, hence $f=0$.

(d) Let $K$ and $L$ be two kernels associated to the same reproducing kernel Hilbert space $\cR$. 
Then, for all $x,y\in X$, $h\in\cH_x$, $k\in\cH_y$, we have
\begin{align*}
\langle K(x,y)k,h\rangle_{\cH_x} & =\langle K_y(x)k,h\rangle_{\cH_x}=\langle K_yk,L_xh\rangle_\cR=\ol{\langle L_xh,K_yk\rangle_\cR}\\ 
& =\ol{\langle L(y,x)h,k\rangle_{\cH_y}}=\ol{\langle h,L(y,x)^*k\rangle_{\cH_x}} \\ & 
= \langle L(y,x)^*k,h\rangle_{\cH_x}=\langle L(x,y)k,h\rangle_{\cH_x},
\end{align*}
hence $K=L$.
\end{proof}

One of the most important properties of a reproducing kernel Hilbert space consists in the fact that, as a 
function space, its topology makes continuous all evaluation operators. Recall that $\cF_\FF(X;\bH)$ is a complete Hausdorff 
locally convex space when endowed with the seminorms $p_x$, $x\in X$, defined as in \eqref{e:pexef}. The next result
is Theorem~4 in \cite{GheondeaTilki} and a generalisation of classical results, cf.\ \cite{Aronszajn}.

\begin{theorem}\label{t:evop} With notation as before, let $\cH$ be a Hilbert space in the vector space $\cF_\FF(X;\bH)$.
The following assertions are equivalent.
\begin{itemize}
\item[(a)] $\cH$ is a reproducing kernel space of $\bH$-valued maps on $X$.
\item[(b)] For any $x\in X$, the linear operator $\cH\ni f\mapsto f(x)\in \cH_x$ is bounded.
\item[(c)] $\cH\hookrightarrow \cF_\FF(X;\bH)$, that is, the Hilbert space $\cH$ is continuously included in 
$\cF_\FF(X;\bH)$.
\end{itemize}
\end{theorem}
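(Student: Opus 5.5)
I plan to prove the cycle of implications (a)$\Ra$(b)$\Ra$(c)$\Ra$(b)$\Ra$(a). The only real work is in (b)$\Ra$(a), and it is a Riesz representation argument.

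\emph{(a)$\Ra$(b).} Assume $\cH$ is a reproducing kernel space with kernel $K$. Fix $x\in X$ and let $E_x\colon\cH\ra\cH_x$ be $E_xf=f(x)$. By (rk3), for $h\in\cH_x$ we have
\[
\langle E_xf,h\rangle_{\cH_x}=\langle f,K_xh\rangle_\cH .
\]
Taking the supremum over $\|h\|\leq 1$ gives $\|f(x)\|\leq \|K_x\|\,\|f\|_\cH$. Here $K_x\colon\cH_x\ra\cH$ is bounded by the Remark preceding Proposition~\ref{p:rkhs}. Hence $E_x$ is bounded and $E_x=K_x^*$.

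\emph{(b)$\Leftrightarrow$(c).} The topology of $\cF_\FF(X;\bH)$ is generated by the seminorms $p_x$ of \eqref{e:pexef}. The linear inclusion $\cH\hookrightarrow\cF_\FF(X;\bH)$ is continuous if and only if each seminorm $p_x\circ\iota=\|E_x(\cdot)\|$ is bounded by a constant times $\|\cdot\|_\cH$. This is precisely the boundedness of every $E_x$.

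\emph{(b)$\Ra$(a).} This is the main step. Assume each $E_x$ is bounded, so $E_x^*\in\cB(\cH_x,\cH)$, and define
\[
K(y,x):=E_yE_x^*\in\cB(\cH_x,\cH_y),\qquad x,y\in X .
\]
Then $K_xh=K(\cdot,x)h$ is the section $y\mapsto E_y(E_x^*h)=(E_x^*h)(y)$, so $K_xh=E_x^*h\in\cH$, which is (rk2). For $f\in\cH$, $x\in X$, $h\in\cH_x$ we get
\[
\langle f,K_xh\rangle_\cH=\langle f,E_x^*h\rangle_\cH=\langle E_xf,h\rangle_{\cH_x}=\langle f(x),h\rangle_{\cH_x},
\]
which is (rk3). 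Condition (rk1) holds by hypothesis, since $\cH$ is a subspace of $\cF_\FF(X;\bH)$ with the induced operations. So $\cH$ is a reproducing kernel Hilbert space with kernel $K$. By Proposition~\ref{p:rkhs}(d) this $K$ is uniquely determined.

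The only subtle point I expect is in (b)$\Ra$(a): identifying the section $K_xh$ with the vector $E_x^*h$. This needs $\cH$ to consist genuinely of sections, with no quotient involved, so that an element of $\cH$ is determined by its values $E_yf$. That holds because $\cH\subseteq\cF_\FF(X;\bH)$.
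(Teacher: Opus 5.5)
Your proof is correct and complete. The evaluation operators $E_x$, the kernel $K(y,x):=E_yE_x^*$ (so that the section $K_xh$ is literally the vector $E_x^*h\in\cH$), and the seminorm description of the topology of $\cF_\FF(X;\bH)$ for (b)$\Leftrightarrow$(c) together form the standard argument. The paper does not prove this statement itself: it quotes it as Theorem~4 of \cite{GheondeaTilki}, an operator valued version of Aronszajn's classical result, and your route is the standard proof of that result.
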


The implication (a)$\Ra$(b) in the next theorem is a generalisation of a theorem of E.H.~Moore \cite{Moore} and N.~Aronszajn 
\cite{Aronszajn}, as well as of many other operator valued versions of it, such as 
C.~Carmelli, E.~De Vito, and A.~Toigo \cite{Carmelli}. More general but partially special variants can 
be seen at T.~Constantinescu and A.~Gheondea \cite{ConstantinescuGheondea1} and
S.~Ay and A.~Gheondea \cite{AyGh2015}, \cite{AyGh2017}, \cite{AyGh2019}. This is Theorem~2 
in \cite{GheondeaTilki}.

\begin{theorem}\label{t:rkh} Let $K$ be an $\bH$-operator valued kernel, where $X\ast \bH$ is 
a bundle of Hilbert spaces.

\nr{1}  The 
following assertions are equivalent.
\begin{itemize}
\item[(a)] $K$ is positive semidefinite.
\item[(b)] There exists a reproducing kernel Hilbert space $\cR$ having $K$
its reproducing kernel.
\end{itemize}

\nr{2} The reproducing kernel Hilbert space $\cR$ is uniquely determined by its reproducing kernel $K$.
\end{theorem}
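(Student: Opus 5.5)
The implication (b)$\Ra$(a) of part (1) is exactly Proposition~\ref{p:rkhs}(b), so the plan is to prove (a)$\Ra$(b) by building $\cR$ from the minimal linearisation, and then to prove uniqueness (2).

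For (a)$\Ra$(b), assume $K$ is positive semidefinite and let $(\cK;V)=(\cK_K;V_K)$ be the minimal Hilbert space linearisation given by Theorem~\ref{t:kolmogorov}. Define
\begin{equation*}
U\colon \cK\ra \cF_\FF(X;\bH),\qquad (Uk)(x):=V(x)^*k,\quad k\in\cK,\ x\in X.
\end{equation*}
The map $U$ is linear. By minimality (kd4), it is injective: $V(x)^*k=0$ for all $x$ means $k\perp V(x)\cH_x$ for all $x$, hence $k=0$. Set $\cR:=U\cK$ and transport the inner product, $\langle Uk,Ul\rangle_\cR:=\langle k,l\rangle_\cK$, so that $\cR$ is a Hilbert space and a linear subspace of $\cF_\FF(X;\bH)$; this gives (rk1).

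For (rk2), I check that $U(V(y)h)(x)=V(x)^*V(y)h=K(x,y)h$, so $K_yh=UV(y)h\in\cR$.

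For (rk3), for $f=Uk$ compute
\begin{equation*}
\langle f,K_xh\rangle_\cR=\langle k,V(x)h\rangle_\cK=\langle V(x)^*k,h\rangle_{\cH_x}=\langle f(x),h\rangle_{\cH_x}.
\end{equation*}

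For uniqueness (2), let $\cR_1$ and $\cR_2$ both have reproducing kernel $K$, and let $\cL:=\lin\{K_xh\mid x\in X,\ h\in\cH_x\}$. This set is contained in both spaces. By Proposition~\ref{p:rkhs}(a), the two inner products agree on $\cL$, since each equals $\langle K(x,y)k,h\rangle$. By Proposition~\ref{p:rkhs}(c), $\cL$ is dense in each $\cR_i$.

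Now take $f\in\cR_1$ and choose $f_n\in\cL$ with $f_n\ra f$ in $\cR_1$. Then $(f_n)$ is Cauchy in $\cR_2$ as well, so it converges to some $g\in\cR_2$. By Theorem~\ref{t:evop}, evaluations are continuous in both spaces, so $f(x)=\lim f_n(x)=g(x)$ for every $x$, and therefore $f=g$. Hence $\cR_1\subseteq\cR_2$ isometrically, and by symmetry $\cR_1=\cR_2$ with equal norms.

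The main obstacle is this last limiting argument: one must identify the two abstract completions as the same set of sections. The pointwise continuity supplied by Theorem~\ref{t:evop} is what makes this work. The remaining steps are routine verifications.
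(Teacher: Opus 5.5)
Your proof is correct. The existence part is the construction the paper recalls in Remark~\ref{r:kolrk}: take $\cR=\{V(\cdot)^*f\mid f\in\cK\}$ from the minimal linearisation and transport the inner product of $\cK$. You also supply the point the remark leaves implicit, namely that minimality makes $f\mapsto V(\cdot)^*f$ injective, so the transported inner product is well defined.

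For uniqueness, the paper gives no argument of its own; it refers to Theorem~2 of the cited work of Gheondea and Tilki. It frames uniqueness through the bijection of Remark~\ref{r:kolrk} between reproducing kernel Hilbert spaces and unitary equivalence classes of minimal linearisations. Along that route, each $\cR_i$ yields the minimal linearisation $(\cR_i;V_i)$ with $V_i(x)h=K_xh$. Theorem~\ref{t:kolmogorov} then gives a unitary $U\colon\cR_1\ra\cR_2$ with $UK_xh=K_xh$. The reproducing property forces $\langle (Uf)(x),h\rangle_{\cH_x}=\langle Uf,UK_xh\rangle_{\cR_2}=\langle f(x),h\rangle_{\cH_x}$, so $U$ is the identity on sections.

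Your Aronszajn-type completion argument bypasses the uniqueness of linearisations and uses only Proposition~\ref{p:rkhs}(a),(c) and the continuity of evaluations. Both routes are equally short. You do not need Theorem~\ref{t:evop} for the continuity of evaluations: it follows directly from (rk3) via $\|f(x)\|_{\cH_x}\leq\|K(x,x)\|^{1/2}\|f\|_{\cR}$.
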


Due to the uniqueness property of the reproducing kernel Hilbert space associated 
to a positive semidefinite $\bH$-operator valued kernel $K$, it is natural to denote
this reproducing kernel Hilbert space by $\cR_K$.

\begin{remark}\label{r:kolrk}\!\! \emph{Minimal Linearisations versus Reproducing 
Kernel Hilbert Spaces}. It is useful to treat more carefully the relation between linearisations and reproducing kernel Hilbert 
spaces, which shows up in the proof of Theorem~\ref{t:rkh}, namely, that there is a 
natural bijective transformation between the unitary equivalence class of minimal
linearisations $(\cK;V)$ of $K$ and the reproducing kernel Hilbert
space $\cR_K$. The transformation from minimal linearisations 
$(\cK;V)$ to the reproducing kernel Hilbert space $\cR_K$ is described during the
proof of the implication (a)$\Ra$(b) of Theorem~\ref{t:rkh}, see Appendix C in \cite{GheondeaTilki}, and it is 
related to the general idea that the concept of reproducing kernel Hilbert space is closely related to that of 
operator range, see P.A.~Fillmore and J.P.~Williams 
\cite{FillmoreWilliams}, for a survey.  More precisely, one defines
\begin{equation}\label{e:rel}
\cR:=\{V(\cdot)^*f\mid f\in\cK\},
\end{equation}
and note that $\cR$ is a vector subspace of $\cF_\FF(X;\bH)$ on which the inner product 
$\langle\cdot,\cdot\rangle_\cR$ defined by
\begin{equation}\label{e:relip}
\langle V(\cdot)^*f,V(\cdot)^*g\rangle_\cR:=\langle f,g\rangle_\cK,\quad f,g\in \cK,
\end{equation}
turns $\cR$ into an $\bH$-valued reproducing kernel Hilbert space with $\bH$-operator valued reproducing kernel $K$.

In the following we
describe the inverse of this transformation.
Let $(\cR;\langle\cdot,\cdot\rangle_\cR)$ be a 
reproducing kernel Hilbert space with reproducing kernel $K$. 
We define the operator bundle $V=\{V(x)\}_{x\in X}$ by
\begin{equation}\label{e:v} V(x)h:=K_xh,\quad x\in X,\ h\in\cH_x,
\end{equation} and remark that $V(x)\colon\cH_x\ra\cR$ for all $x\in X$. 
By means of the reproducing property (rk3) of the kernel $K$, for all $x,y\in X$, $h\in\cH_x$, and $k\in\cH_y$, we have
\begin{equation*} \langle V(x)h,V(x)h\rangle_\cR=\langle K_xh,K_xh\rangle_\cR
=\langle K(x,x)h,h\rangle_{\cH_x}\leq \|K(x,x)\| \|h\|^2_{\cH_x},
\end{equation*} hence $V(x)\in\cB(\cH_x,\cR)$. Also, using once more
the reproducing property (rk3) of $K$, it follows that, for all $x,y\in X$, $h\in\cH_x$,
and $g\in\cH_y$, we have
\begin{equation*} \langle V(y)^*V(x)h,g\rangle_{\cH_y}=\langle V(x)h,
V(y)g\rangle_\cR=\langle K_xh,K_yg\rangle_\cR=\langle K(y,x)h,g\rangle_{\cH_y}.
\end{equation*}
Therefore, $K(y,x)=V(y)^*V(x)$ for all $x,y\in X$ and hence, $(\cR;V)$ is a 
linearisation of $K$. In addition, using the minimality property,
it is easy to see that the linearisation $(\cR;V)$ is minimal as well.
\end{remark}

\section{Positive Semidefinite Kernels Invariant under Actions of $*$-Semigroupoids}\label{s:psk}

\subsection{$*$-Semigroupoids}\label{ss:ssg}
We first review definitions and a few facts concerning $*$-semigroupoids, following 
\cite{GheondeaUdrea}.

 A \emph{semigroupoid} \cite{Tilson} is a quintuple $(\Gamma;S;d;c;\cdot)$ subject to the 
following conditions.
\begin{itemize}
\item[(SG1)] $\Gamma$ and $S$ are nonempty sets.
\item[(SG2)] $d\colon \Gamma\ra S$ and $c\colon \Gamma\ra S$ are maps called, respectively, the \emph{domain map} and the \emph{codomain map}. 
\item[(SG3)] For every $\alpha,\beta\in\Gamma$ such that $d(\alpha)=c(\beta)$, there exists a unique
element $\alpha\cdot\beta\in\Gamma$, 
with $d(\beta)=d(\alpha\cdot\beta)$ and $c(\alpha\cdot\beta)=c(\alpha)$.
\item[(SG4)] For any $\alpha,\beta,\gamma\in\Gamma$ such that $d(\beta)=c(\gamma)$ 
and $d(\alpha)=c(\beta)$, we have 
\begin{equation*}\alpha\cdot (\beta\cdot\gamma)=(\alpha\cdot\beta)\cdot\gamma.\end{equation*}
\end{itemize}

As usual, we will simply say that $\Gamma$ is a semigroupoid and, by this, we mean that 
there exist the set $S_\Gamma$, called the \emph{set of symbols} of $\Gamma$, 
the maps $d_\Gamma$ and $c_\Gamma$, and the operation $\cdot$ 
such that the quintuple $(\Gamma;S_\Gamma;d_\Gamma;c_\Gamma;\cdot)$ 
satisfies the properties (SG1) through (SG4). Also, when there is no danger of confusion, 
we will drop the lower index $\Gamma$. We will also write $\alpha\beta$ instead of $\alpha\cdot\beta$, 
whenever the operation is possible. In this respect, one denotes
\begin{equation*}
\Gamma^{(2)}:=\{(\alpha,\beta)\mid \alpha,\beta\in\Gamma,\ c(\beta)=d(\alpha)\},
\end{equation*}
and call it the \emph{set of composable pairs} in $\Gamma$. 

For each $s,t\in S$ we consider the fibres
\begin{equation*}
\Gamma^t:=\{\alpha\in \Gamma\mid c(\alpha)=t\},\quad \Gamma_s:=\{\alpha\in \Gamma
\mid d(\alpha)=s\},\quad \Gamma_s^t:=\Gamma_s\cap \Gamma^t.
\end{equation*}
In general, any of these fibres can be empty. Letting 
$S_0:=\{ s\in S\mid \Gamma^s=\emptyset=\Gamma_s\}$, the set
of isolated points in $S$, by replacing $S$ with $S\setminus S_0$ we remove those points in $S$ that are 
neither the domain nor the codomain of any element $\gamma\in \Gamma$. Without loss of generality, we can 
thus assume that, for any $s\in S$, either $\Gamma_s$ or $\Gamma^s$ is nonempty.

If $\Gamma_s\neq\emptyset$ for all $s\in S$ then 
$\{\Gamma_s\mid s\in S\}$ is a partition of $\Gamma$ and hence it defines an 
equivalence relation on $\Gamma$: $\alpha\sim_d\beta$ if $d(\alpha)=d(\beta)$. 
Similarly,  if $\Gamma^s\neq \emptyset$ for all $s\in S$ then
$\{\Gamma^s\mid s\in S\}$ is a partition of $\Gamma$ and hence it defines an
equivalence relation on $\Gamma$: $\alpha\sim_c\beta$ if $c(\alpha)=c(\beta)$.
 
 Also, $\Gamma_s^t$ may be empty for some $s,t\in S$. 
 If $\Gamma_s^t$ is not empty for each $(s,t)\in S\times S$,
 equivalently, the map $(d,c)\colon \Gamma\ra S\times S$ is surjective,
 the semigroupoid is called \emph{transitive} and, in this case, $\{\Gamma_s^t\mid s,t\in S\}$ 
 is a partition of $\Gamma$ which defines the equivalence relation: $\alpha\sim_{d,c}\beta$ if 
 $d(\alpha)=d(\beta)$ and $c(\alpha)=c(\beta)$.
 If $s\in S$ and $\Gamma_s^s$ is not empty then $\Gamma_s^s$ is a semigroup, 
 called the \emph{isotropy semigroup} at $s$.
 
 A semigroupoid $\Gamma$ has a \emph{unit} if there exists an injective map 
$\epsilon \colon S_\Gamma\ra \Gamma$, for which we use the notation $\epsilon=\{\epsilon_s\}_{s\in S_\Gamma}$, 
subject to the following conditions.
\begin{itemize}
\item[(U1)] For every $s\in S_\Gamma$, $d_\Gamma(\epsilon_s)=c_\Gamma(\epsilon_s)=s$.
\item[(U2)] For any $s\in S_\Gamma$ and any $\alpha\in\Gamma^s$, we have 
$\epsilon_s\alpha=\alpha$.
\item[(U3)] For any $s\in S_\Gamma$ and any $\alpha\in \Gamma_s$, we have 
$\alpha \epsilon_s=\alpha$.
\end{itemize}

A unit $\epsilon$ of the semigroupoid $\Gamma$, if it exists, is unique. In addition,
since the unit $\epsilon$ is an injective map this yields an embedding of $S_\Gamma$ in $\Gamma$.
In this case, one usually denotes $\Gamma^0:=\{\epsilon_s\mid s\in S_\Gamma\}$, 
and calls it the \emph{set of units} of $\Gamma$. Clearly, $\Gamma^0$ is in a bijective correspondence
with $S_\Gamma$ and for semigroupoids with unit one usually identifies them.
If the semigroupoid $\Gamma$ has a unit then, for each $s\in S_\Gamma$ 
the isotropy semigroup $\Gamma_s^s$ has a unit. A semigroupoid with unit can be defined, equivalently, as a 
small category, that is, a category in which the classes of objects and arrows are sets.

Given a semigroupoid $\Gamma$, an \emph{involution} on $\Gamma$ is a map 
$\Gamma\ni \alpha\mapsto \alpha^*\in\Gamma$ subject to the following conditions.
\begin{itemize}
\item[(I1)] For any $\alpha\in\Gamma$, we have $d(\alpha^*)=c(\alpha)$ and $c(\alpha^*)=d(\alpha)$.
\item[(I2)] For any $(\alpha,\beta)\in \Gamma^{(2)}$, we have $(\alpha\beta)^*=\beta^*\alpha^*$.
\item[(I3)] For any $\alpha\in\Gamma$, we have $(\alpha^*)^*=\alpha$.
\end{itemize}
A semigroupoid with a specified involution $*$ will be called a \emph{$*$-semigroupoid}.

Note that, by property (I3), involutions are bijective maps.  If the
$*$-semigroupoid $\Gamma$ has a unit $\epsilon$, then 
\begin{equation*}
\epsilon_s^*=\epsilon_s,\quad s\in S_\Gamma.
\end{equation*}
For any $s\in S_\Gamma$ 
such that $\Gamma_s^s\neq\emptyset$, the isotropy semigroup $\Gamma_s^s$ has an involution.

Let $\Gamma$ be a semigroupoid and $X$ a nonempty set. A \emph{left action} of
$\Gamma$ on $X$ is a pair $(a;\cdot)$ subject to the following conditions.
\begin{itemize}
\item[(A1)] $a\colon X\ra S_\Gamma$ is a surjective map, called \emph{anchor}.
\item[(A2)] For any $x\in X$ and any $\alpha\in\Gamma_{a(x)}$,
there exists a unique element $\alpha\cdot x\in X$ such that $a(\alpha\cdot x)=c(\alpha)$.
\item[(A3)] For any $x\in X$ and any $(\alpha,\beta)\in\Gamma^{(2)}$ such that 
$\beta\in\Gamma_{a(x)}$, we have
\begin{equation*}
(\alpha\beta)\cdot x=\alpha\cdot(\beta\cdot x).
\end{equation*}
\end{itemize}
If, in addition, $\Gamma$ has a unit $\epsilon$, then the left action $(a;\cdot)$ is called \emph{unital} if
\begin{equation*}
\epsilon_{a(x)}\cdot x=x,\quad x\in X.
\end{equation*}

\begin{example}\label{ex:actit} (a) Any semigroupoid $\Gamma$ has a natural left action on itself. To see this, 
let $a\colon \Gamma\ra S_\Gamma$ be defined by $a(\beta):=c(\beta)$. In addition,
for any $\alpha\in\Gamma$ and any $\beta\in \Gamma_{a(\alpha)}$, the action is defined by 
$\beta\cdot \alpha:=\beta \alpha$.

(b) Any semigroupoid $\Gamma$ has a natural left action on $S_\Gamma$. Here the anchor map 
$a\colon S_\Gamma\ra S_\Gamma$ is the identity map,  $a(s)=s$ for all $s\in S_\Gamma$, and for any 
$s\in S_\Gamma$ and any $\alpha\in \Gamma_s$, the action is $\alpha\cdot s=c(\alpha)$.
\end{example}

In the following we recall three basic examples that will play a major role in this article.

\begin{example}\label{ex:segah} (\emph{$*$-Semigroupoids Modelled by Bounded Operators on Hilbert Spaces})
Let $S$ be a nonempty set and $\bH=\{\cH_s\}_{s\in S}$ a \emph{bundle of Hilbert spaces}
over $\FF$ (either $\RR$ or $\CC$). Let 
\begin{equation}\label{e:gamabeh}
\Gamma_{\bH}=\bigsqcup_{s,t\in S} \cB(\cH_s,\cH_t),
\end{equation}
where $\cB(\cH_s,\cH_t)$ denotes the vector space of all bounded linear operators 
$T\colon \cH_s\ra\cH_t$.

For every $T\in \Gamma_{\bH}$, there exists uniquely $s,t\in S$ such that 
$T\in\cB(\cH_s,\cH_t)$ and then
we define $d(T)=s$ and $c(T)=t$. In this way, $d\colon\Gamma_{\bH}\ra S$ and 
$c\colon \Gamma_{\bH}\ra S$ are surjective maps.

The operation of composition in $\Gamma_{\bH}$ 
is defined by operator composition: if $A,B\in\Gamma_{\bH}$
are such that $d(A)=c(B)=u\in S$, then $A\in\cB(\cH_u,\cH_t)$ for some $t\in S$ and 
$B\in\cB(\cH_s,\cH_u)$ for some $s\in S$. Then $AB\in \cB(\cH_s,\cH_t)$
is the composition of the 
operators $A$ and $B$, in this order. Thus, $(\Gamma_{\bH},S,d_{\bH},c_{\bH},\cdot)$ is a semigroupoid.

The semigroupoid $\Gamma_{\bH}$ has the unit $\epsilon\colon S\ra\Gamma_{\bH}$ defined by 
$\epsilon_s:=I_{\cH_s}$, where $I_{\cH_s}$ denotes the identity operator on $\cH_s$, for all $s\in S$.
It also has a natural involution: $\Gamma_{\bH}\ni T\mapsto T^*\in\Gamma_{\bH}$, where, 
if $T\in\cB(\cH_s,\cH_t)$, by $T^*\in\cB(\cH_t,\cH_s)$ we denote the Hilbert space adjoint operator.

In addition, the semigroupoid $\Gamma_{\bH}$ has a natural left action on $S$ defined by 
$T\cdot s:=t$, where $T\in\cB(\cH_s,\cH_t)$.
\end{example}

\begin{example} (\emph{Semigroupoids Modelled on Vector Spaces}) 
\label{ex:semes} Let $S$ be a nonnempty 
set and $\bD=\{\cD_s\}_{s\in S}$ be a bundle of vector spaces over $\FF$. We let
\begin{equation*}
\Gamma_{\bD}=\bigsqcup_{s,t\in S}\cL(\cD_s,\cD_t),
\end{equation*}
where by $\cL(\cD_s,\cD_t)$ we understand the vector space of all linear operators 
$T\colon \cD_s\ra\cD_t$. As in the previous example, for any $T\in \Gamma_{\bD}$ there exists
uniquely $s,t\in S$ such that $T\in\cL(\cD_s,\cD_t)$ and we let $d(T):=s$ and $c(T):=t$, hence
$d\colon \Gamma_{\bD}\ra S$ and $c\colon \Gamma_{\bD}\ra S$ are surjections. The partial
composition is defined as in the previous example and, in this way, $\Gamma_{\bD}$ becomes
a semigroupoid with unit $\epsilon\colon S\ra\Gamma_{\bD}$, $\epsilon(s):=I_{\cD_s}$. As in the 
previous example, there is a natural left action of $\Gamma_{\bD}$ onto $S$.
\end{example}

\begin{example}\label{ex:semuh}
 (\emph{$*$-Semigroupoids Modelled by Unbounded Operators in Hilbert Spaces.})
\label{ex:semeseh} 
Let us assume, with notation as in the previous example,
that for each $s\in S$ the vector space 
$\cD_s$ is a dense subspace of a Hilbert space $\cH_s$ and let
\begin{equation}\label{e:gabed}
\Gamma_{\bH;\bD}:=\bigsqcup_{s,t\in S} \cL^*(\cD_s,\cD_t),
\end{equation}
where, for each $s,t\in S$, we let $\cL^*(\cD_s,\cD_t)$ denote the vector space of all
linear operators $T\colon \cD_s(\subseteq \cH_s)\ra\cH_t$ subject to the following assumptions.
\begin{itemize}
\item[(i)] $T\cD_s\subseteq \cD_t$.
\item[(ii)] $ \cD_t\subseteq \dom(T^*)$ and $T^*\cD_t\subseteq\cD_s$.
\end{itemize}
Here, by $T^*\colon\dom(T^*)(\subseteq \cH_t)\ra\cH_s$ we understand 
the adjoint operator (possibly unbounded) defined in the usual sense,
\begin{equation*}
\dom(T^*):=\{k\in \cH_t\mid \cD_s\ni h\mapsto \langle Th,k\rangle_{\cH_t}\mbox{ is bounded}\},
\end{equation*}
and
\begin{equation*}
\langle Th,k\rangle_{\cH_t}=\langle h,T^*k\rangle_{\cH_s},\quad h\in\cD_s,\quad k\in\dom(T^*).
\end{equation*}
Note that, in this way, any operator $T$ in $\cL^*(\cD_s,\cD_t)$ is closable. The involution on $\Gamma_{\bH,\bD}$ is
defined by $\cL^*(\cD_s,\cD_t)\ni T\mapsto T^*|_{\cD_t}$,  for all $s,t\in S$.

Then, it is easy to see that $\Gamma_{\bH;\bD}\ni T\mapsto T^*\in\Gamma_{\bH;\bD}$ is an involution
which makes $\Gamma_{\bH;\bD}$ a $*$-semigroupoid with unit. As in the previous example, 
there is a natural left action of $\Gamma_{\bH;\bD}$ onto $S$.
\end{example}

A semigroupoid $\Gamma$ is called a \emph{groupoid} \cite{Brandt} if it has a unit $\epsilon$ and there exists a 
map $\Gamma\ni \alpha\mapsto \alpha^{-1}\in \Gamma$ such that, for every $\alpha\in \Gamma$,
we have $d(\alpha^{-1})=c(\alpha)$, $c(\alpha^{-1})=d(\alpha)$, and
\begin{equation}
\alpha\alpha^{-1}=\epsilon_{c(\alpha)},\quad \alpha^{-1}\alpha=\epsilon_{d(\alpha)}.
\end{equation}

If $\Gamma$ is a groupoid then its unit set 
$\Gamma^0=\{\alpha^{-1}\alpha\mid \alpha\in\Gamma\}$ is naturally identified with the set 
$S_\Gamma$, and there is no need to specify $S_\Gamma$ beforehand, which is done by 
some authors. 

A semigroupoid $\Gamma$ is called an \emph{inverse semigroupoid}, e.g.\ see \cite{Liu}, if for any 
$\alpha\in\Gamma$ there exists a unique $\alpha^\prime\in\Gamma$ such that 
$\alpha\alpha^\prime\alpha=\alpha$ 
and $\alpha^\prime\alpha\alpha^\prime=\alpha^\prime$. Note that, in particular, this means that
$d_\Gamma(\alpha^\prime)=c_\Gamma(\alpha)$ and 
$c_\Gamma(\alpha^\prime)=d_\Gamma(\alpha)$, for all $\alpha\in \Gamma$.

If $\Gamma$ is an inverse semigroupoid then, for any $\alpha\in\Gamma$, we have
$(\alpha^\prime)^\prime=\alpha$ and for any $(\alpha,\beta)\in\Gamma^{(2)}$ we have 
$(\alpha\beta)^\prime=\beta^\prime\alpha^\prime$. In particular, any inverse semigroupoid is a 
$*$-semigroupoid with $\alpha^*:=\alpha^\prime$. Also, any groupoid $\Gamma$ is an inverse semigroupoid 
with $\gamma^\prime:=\gamma^{-1}$.

\subsection{Representations of $*$-Semigroupoids}\label{ss:rssg}
 Here we review basic definitions and facts concerning representations of 
$*$-semigroupoids, following \cite{GheondeaUdrea}.

Let $\Gamma$ and $\Lambda$ be two semigroupoids. A pair $(\phi;\Phi)$ is 
a \emph{semigroupoid morphism} from $\Gamma$ to $\Lambda$ if the following conditions hold.
\begin{itemize}
\item[(SM1)] $\phi\colon S_\Gamma\ra S_\Lambda$ and $\Phi\colon \Gamma\ra \Lambda$ are maps.
\item[(SM2)] For any $\alpha\in\Gamma$ we have $d_\Lambda(\Phi(\alpha))=\phi(d_\Gamma(\alpha))$ 
and $c_\Lambda(\Phi(\alpha))=\phi(c_\Gamma(\alpha))$.
\item[(SM3)] For any $(\alpha,\beta)\in \Gamma^{(2)}$ we have 
$(\Phi(\alpha),\Phi(\beta))\in\Lambda^{(2)}$ and
\begin{equation*}
\Phi(\alpha\beta)=\Phi(\alpha)\Phi(\beta).
\end{equation*}
\end{itemize}
The map $\phi$ is called the \emph{aggregation map}.

On the other hand, if both $\Gamma$ and $\Lambda$ are $*$-semigroupoids, the semigroupoid
morphism $(\phi;\Phi)$ is called a \emph{$*$-morphism} if
\begin{equation*}
\Phi(\alpha^*)=\Phi(\alpha)^*,\quad \alpha\in\Gamma.
\end{equation*}

The pair $(\phi;\Phi)$ is called a \emph{monomorphism}, \emph{epimorphism}, \emph{isomorphism}, 
of semigroupoids if both $\phi$ and $\Phi$ are injective, surjective, and bijective, respectively.

In this article we will use two different types of representations of $*$-semigroupoids, the first one with bounded 
operators and the second one with unbounded operators.

 \begin{definition}\label{d:serep}
Let $\Gamma$ be a semigroupoid. A \emph{representation} of $\Gamma$ on a
bundle of Hilbert spaces $\bH=\{\cH_s\}_{s\in S}$ is a semigroupoid morphism 
$(\phi;\Phi)$ of $\Gamma$ to the semigroupoid $\Gamma_{\bH}$, see Example~\ref{ex:segah}.
More precisely:
\begin{itemize}
\item[(R1)] $\phi\colon S_\Gamma\ra S$ and $\Phi\colon \Gamma\ra\Gamma_{\bH}$ 
are maps.
\item[(R2)] For any $\alpha\in\Gamma$, we have 
$\Phi(\alpha)\in\cB(\cH_{\phi(d(\alpha))},\cH_{\phi(c(\alpha))})$. 
\item[(R3)] For any 
$(\alpha,\beta)\in\Gamma^{(2)}$, we have $\Phi(\alpha\beta)=\Phi(\alpha) \Phi(\beta)$.
\end{itemize}

If $\Gamma$ is a $*$-semigroupoid then the morphism $(\phi;\Phi)$ is called a 
\emph{$*$-represen\-tation} of $\Gamma$ on a bundle of Hilbert spaces
$\bH$ if it is a $*$-morphism of $*$-semigroupoids from $\Gamma$ to $\Gamma_{\bH}$.
\end{definition}

The more general concept of representation of a $*$-semigroupoid on 
$*$-semigroupoids of type $\Gamma_{\bH,\bD}$, see Example~\ref{ex:semeseh}, provides representations with 
unbounded operators. 

\begin{definition}\label{d:urep} Let $\Gamma$ be a semigroupoid and consider two bundles
of vector spaces $\bD=\{\cD_s\}_{s\in S}$ and $\bH=\{\cH_s\}_{s\in S}$, where $\cD_s$ is a dense
subspace in the Hilbert space $\cH_s$, for all $s\in S$. With notation as in 
Example~\ref{ex:semuh}, a \emph{generalised representation} of $\Gamma$ on the pair of
bundles $(\bD;\bH)$ is a pair of maps $(\phi;\Phi)$ subject to the following conditions.
\begin{itemize}
\item[(UR1)] $\phi\colon S_\Gamma\ra S$ and $\Phi\colon \Gamma\ra\Gamma_{\bH,\bD}$ are maps.
\item[(UR2)] For any $\alpha\in \Gamma$ we have $\Phi(\alpha)$ a linear operator such that
$\cD_{\phi(d(\alpha))}\subseteq \dom(\Phi(\alpha))$ and 
$\Phi(\alpha)\cD_{\phi(d(\alpha))}\subseteq\cD_{\phi(c(\alpha))}$.
\item[(UR3)] For any $(\alpha,\beta)\in \Gamma^{(2)}$ we have 
$\Phi(\alpha\beta)|_{\cD_{\phi(d(\beta))}}=\Phi(\alpha)\Phi(\beta)|_{\cD_{\phi(d(\beta))}}$.
\end{itemize}

We call the generalised representation $(\phi;\Phi)$ \emph{orthogonal} if the following property holds
\begin{itemize}
\item[(UR4)] For any $\alpha,\beta\in \Gamma$ such that
$\phi(c(\alpha))=\phi(c(\beta))$, but $c(\alpha)\neq c(\beta)$,
 it follows that $\Phi(\alpha)\cD_{\phi(d(\alpha))}\perp 
\Phi(\beta)\cD_{\phi(d(\beta))}$.
\end{itemize}

If, in addition, $\Gamma$ is a $*$-semigroupoid,
then we call $(\phi;\Phi)$ a \emph{generalised $*$-representation} of $\Gamma$ on the bundles $(\bD,\bH)$ if
the following conditions hold.
\begin{itemize}
\item[(UR5)] For any $\alpha\in \Gamma$ we have
$\Phi(\alpha)^*\cD_{\phi(c(\alpha))}\subseteq\cD_{\phi(d(\alpha))}$. 
\item[(UR6)] For any $\alpha\in \Gamma$ we have $\Phi(\alpha^*)|_{\cD_{\phi(c(\alpha))}}
=\Phi(\alpha)^*|_{\cD_{(\phi(c(\alpha))}}$.
\end{itemize}
\end{definition}

In case the representation $(\phi;\Phi)$ is aggregation free, that is,
$\phi$ is injective, it is automatically orthogonal. In general, the fashion in which different pieces of the
representation are aggregated within a bundle of Hilbert spaces makes technical obstructions. 

\subsection{Generalised $\Gamma$-Invariant Hilbert Space Linearisations}\label{ss:ipsdk}
Let $X$ be a nonempty set and $(\Gamma,a)$ a left action of the $*$-semigroupoid $\Gamma$ on $X$, 
with $a\colon X\ra S$ the anchor map. We use notation as in Subsection~\ref{ss:ssg}, 
hence $S$ is the set of symbols of $\Gamma$ and
$d\colon \Gamma\ra S$ and $c\colon \Gamma\ra S$ denote the domain map and the codomain map, 
respectively. Since the anchor map $a$ is surjective, for each $s\in S$ the set
\begin{equation}\label{e:xes} X_s:=\{x\in X\mid a(x)=s\}=a^{-1}(\{s\})
\end{equation}
is nonempty and, consequently, we have a partition induced by the anchor map $a$,
\begin{equation}\label{e:part}
X=\bigcup_{s\in S}X_s,
\end{equation}
which we denote by $P_a:=\{X_s\}_{s\in S}$.

Let $K\colon X\times X\ra\Gamma_{\bH}$ be a kernel, where $\bH=\{\cH_x\mid x\in X\}$ is a bundle of Hilbert 
spaces over the field $\FF$, where the $*$-semigroupoid with unit 
$\Gamma_{\bH}$ is defined in Example~\ref{ex:segah}. 
In particular, $K(x,y)\in \cB(\cH_y,\cH_x)$ for all $x,y\in X$. Recall that such a kernel is called an $\bH$-operator valued kernel 
on $X$. 

An $\bH$-operator valued kernel $K$
is \emph{partially Hermitian} with respect to the partition \eqref{e:part} if,
\begin{equation}\label{e:pah}
K(x,y)^*=K(y,x),\quad s\in S,\ x,y\in X_s.\end{equation} 
The kernel $K$ is called
\emph{partially positive semidefinite} with respect to the partition \eqref{e:part} if, for 
any $s\in S$, for any $n\in\NN$, any $x_1,\ldots,x_n \in X_s$, and any $h_1,\ldots,h_n$ such that 
$h_i\in \cH_{x_i}$ for all $i=1,\ldots,n$, we have 
\begin{equation}\label{e:partpsd}
\sum_{i,j=1}^n \langle K(x_j,x_i)h_i,h_j\rangle_{\cH_{x_j}}\geq 0.
\end{equation}
In case $\FF=\CC$, the property of being partially Hermitian with respect to a partition is a consequence of being partially positive 
semidefinite with respect to that partition but, if $\FF=\RR$, this should be assumed in advance.

With notation as before, 
we assume, in addition, that for all $x\in X$ and all $\alpha\in \Gamma_{a(x)}$ we have 
\begin{equation}\label{e:orbit}\cH_{\alpha\cdot x}=\cH_x.\end{equation} 
This assumption is needed in order to introduce two more concepts that are essential in this paper. First,
for each $\alpha\in \Gamma$ we can consider the linear transformation
\begin{equation}\label{e:psia}
\Psi(\alpha)\colon \cF_\FF^0(X_{d(\alpha)};\bH_{d(\alpha)})\ra  \cF_\FF^0(X_{c(\alpha)};\bH_{c(\alpha)}),
\end{equation}
defined by,
\begin{equation}\label{e:psias}
\Psi(\alpha)\sum_{i=1}^n \delta_{x_i}h_i:= \sum_{i=1}^n \delta_{\alpha\cdot x_i}h_i,
\end{equation}
where $x_1,\ldots,x_n\in X_{d(\alpha)}$, $h_i\in\cH_{x_i}$, and $ i=1,\ldots,n,\ n\in\NN$ are arbitrary,
with notation as in \eqref{e:delta} and \eqref{e:fesum}.

\begin{remark}\label{r:orbit}
 If $x\in X$, let $\Gamma\cdot x:=\{\alpha\cdot x\mid \alpha\in \Gamma_{a(x)}\}\cup\{x\}$ be the 
$\Gamma$-orbit of $x$.
The assumption \eqref{e:orbit} means that the Hilbert spaces $\cH_x$ are the same throughout the 
$\Gamma$-orbit of $x$. Equivalently, this means that the Hilbert space bundle $\Gamma\!\cdot\! x\ast \bH$ is trivial. 
In order for the definition of $\Psi(\alpha)$ as in \eqref{e:psias} 
to make sense, this assumption is needed.
\end{remark}

\begin{lemma}\label{l:psi} With notation as before and assumption \eqref{e:orbit}, by
\eqref{e:psia} and \eqref{e:psias} we define a semigroupoid morphism 
$(\mathrm{Id}_S,\Psi)$, where $\Psi\colon \Gamma\ra \Gamma_{\bD}$ and $\Gamma_{\bD}$ 
is defined as in Example~\ref{ex:semes}, with $\bD=\{\cD_s\}_{s\in S}$ and
\begin{equation}\label{e:cedes}
\cD_s:=\cF_\FF^0(X_s;\bH_s),\quad s\in S.
\end{equation}
\end{lemma}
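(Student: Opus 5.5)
The plan is to check directly, one at a time, the three axioms (SM1)--(SM3) of a semigroupoid morphism for the pair $(\mathrm{Id}_S;\Psi)$, with target $\Gamma_{\bD}$ as in Example~\ref{ex:semes}. Before that, I will verify that each $\Psi(\alpha)$ is a well defined linear map from $\cD_{d(\alpha)}$ to $\cD_{c(\alpha)}$. Here $\bH_s$ denotes the restricted bundle $\{\cH_x\}_{x\in X_s}$.

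\textbf{Well-definedness and linearity.} By \eqref{e:fesum}, any $f\in\cD_{d(\alpha)}$ has a unique representation $f=\sum_{i=1}^n\delta_{x_i}h_i$ with distinct $x_i\in X_{d(\alpha)}$ and $h_i=f(x_i)$. I will take \eqref{e:psias} to mean its value on this canonical representation. I then check that the same formula holds for an arbitrary, possibly non-distinct, representation: grouping equal points sums the corresponding $h_i$, and the right-hand side of \eqref{e:psias} is additive in each $h_i$ for a fixed point. This single observation gives well-definedness and linearity together.

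The right-hand side makes sense for two reasons. By (A2), $\alpha\cdot x_i$ exists because $a(x_i)=d(\alpha)$, and $a(\alpha\cdot x_i)=c(\alpha)$, so $\alpha\cdot x_i\in X_{c(\alpha)}$. By \eqref{e:orbit}, $h_i\in\cH_{x_i}=\cH_{\alpha\cdot x_i}$. The image has finite support, so it lies in $\cD_{c(\alpha)}$. The points $\alpha\cdot x_i$ need not be distinct, because the action need not be injective; the sum is simply read pointwise, and that is exactly why the arbitrary-representation formulation is convenient.

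\textbf{Axioms (SM1) and (SM2).} Since $\Psi(\alpha)\in\cL(\cD_{d(\alpha)},\cD_{c(\alpha)})$, we get $d_{\bD}(\Psi(\alpha))=d(\alpha)=\mathrm{Id}_S(d(\alpha))$, and likewise for the codomain. So (SM1) and (SM2) hold.

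\textbf{Axiom (SM3), multiplicativity.} Let $(\alpha,\beta)\in\Gamma^{(2)}$, so $d(\alpha)=c(\beta)$ and the composition $\Psi(\alpha)\Psi(\beta)$ is defined in $\Gamma_{\bD}$. For $f=\sum_i\delta_{x_i}h_i$ with $x_i\in X_{d(\beta)}$, we have $\Psi(\beta)f=\sum_i\delta_{\beta\cdot x_i}h_i$. This is a representation, possibly with repeated points, of an element of $\cD_{c(\beta)}=\cD_{d(\alpha)}$, so by the first step $\Psi(\alpha)\Psi(\beta)f=\sum_i\delta_{\alpha\cdot(\beta\cdot x_i)}h_i$. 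By (A3), $\alpha\cdot(\beta\cdot x_i)=(\alpha\beta)\cdot x_i$, hence this equals $\Psi(\alpha\beta)f$. Finally, $d(\alpha\beta)=d(\beta)$ and $c(\alpha\beta)=c(\alpha)$ by (SG3), so both sides are operators between the same spaces.

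The only delicate point I expect is the well-definedness of \eqref{e:psias} when the $\alpha\cdot x_i$ coincide, together with the identification of fibres through \eqref{e:orbit}. Once \eqref{e:psias} is known to hold for arbitrary representations, the rest is a direct application of (A2), (A3) and (SG3).
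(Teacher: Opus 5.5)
Your proof is correct and follows the same route as the paper, whose proof is just the multiplicativity computation $\Psi(\alpha\beta)f=\sum_{i}\delta_{\alpha\cdot(\beta\cdot x_i)}h_i=\Psi(\alpha)\Psi(\beta)f$ obtained from (A3). Your preliminary check that \eqref{e:psias} gives the same value for representations with repeated points is a detail the paper passes over silently. It is needed because the points $\beta\cdot x_i$ may coincide, and you handle it correctly.
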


\begin{proof} Let $(\alpha,\beta)\in\Gamma^{(2)}$, hence $c(\beta)=d(\alpha)$. For any 
$f\in \cF_\FF^0(X_{d(\beta)};\bH_{d(\beta)})$ there exist uniquely $x_1,\ldots,x_n\in X_{d(\beta)}$ and $h_i\in \cH_{x_i}$, for all 
$i=1,\ldots,n$, such that
\begin{equation*}
f=\sum_{i=1}^n \delta_{x_i}h_i,
\end{equation*}
and then,
\begin{align*} 
\Psi(\alpha\beta)f & = \sum_{i=1}^n \delta_{\alpha\beta\cdot x_i}h_i=  \sum_{i=1}^n \delta_{\alpha\cdot(\beta\cdot x_i)}h_i \\
& =  \Psi(\alpha)\sum_{i=1}^n \delta_{\beta\cdot x_i}h_i = \Psi(\alpha)\Psi(\beta)  \sum_{i=1}^n \delta_{x_i}h_i=\Psi(\alpha)\Psi(\beta)f.\qedhere
\end{align*}
\end{proof}

The semigroupoid representation $(\mathrm{Id}_S,\Psi)$ as in Lemma~\ref{l:psi} is called the 
\emph{$\Gamma$-shift representation} on the bundle $X\ast \bH$ on which $\Gamma$ acts.
The kernel $K$ is called \emph{invariant} under the action of $\Gamma$ on $X$, or simply 
\emph{$\Gamma$-invariant}, if
\begin{equation}\label{e:invariant}
K(\alpha\cdot x,y)=K(x,\alpha^*\cdot y),\quad x,y\in X,\ \alpha\in \Gamma_{a(x)}^{a(y)}.
\end{equation}
Note that, in order for the equality \eqref{e:invariant} to make sense, 
the assumption \eqref{e:orbit} is needed, and sufficient (since the involution $*$ is a 
bijection on $\Gamma$).

Our first goal is to show that any $\Gamma$-invariant $\bH$-operator valued kernel $K$ yields a
certain generalised $*$-representation of $\Gamma$ on a bundle of Hilbert space linearisations associated to $K$.
First, we need a definition, which extends to the present setting the idea of representations with unbounded operators,
cf.\ \cite{Schmudgen}, \cite{GheondeaUdrea}.

\begin{definition}\label{d:invariantlin} 
Let $K$ be a $\Gamma$-invariant kernel with respect to the bundle of Hilbert spaces 
$X\ast \bH$. A quadruple $(\bK;\bD;\bV;\Psi)$ is called a \emph{generalised
$\Gamma$-invariant Hilbert space linearisation of $K$} if the following properties hold.
\begin{itemize}
\item[(GIL1)] $\bK=\{\cK_s\}_{s\in S}$ is a 
bundle of Hilbert spaces, $\bD=\{\cD_s\}_{s\in S}$ is a bundle of vector 
spaces, such that $\cD_s$ is dense in $\cK_s$, for each $s\in S$.
\item[(GIL2)] $\bV=\{V_x\}_{x\in X}$ is a bundle of bounded linear operators
$V_x\colon \cH_x\ra \cK_{a(x)}$, for each $x\in X$, such that $K(x,y)=V_x^*V_y$, for all $x,y\in X$ such 
that $a(x)=a(y)$.
\item[(GIL3)] $(\mathrm{Id}_S;\Psi)$ is a generalised $*$-representation of $\Gamma$ on $\Gamma_{\bK,\bD}$
and $\Psi(\alpha) V_x=V_{\alpha\cdot x}$, for all $x\in X$ and all $\alpha\in\Gamma_{a(x)}$.
\end{itemize}
The generalised $\Gamma$-invariant linearisation  $(\bK;\bD;\bV;\Phi)$ is called \emph{minimal} if
\begin{itemize}
\item[(GIL4)] For each $s\in S$, the Hilbert space $\cK_s$ is the closed span of 
$\{V_x \cH_x\mid x\in X,\ a(x)=s\}$.
\end{itemize}
\end{definition}

\begin{theorem}\label{t:invariantul}
Let $K$ be a $\Gamma$-invariant kernel which is partially positive semidefinite, in the sense 
of \eqref{e:partpsd}, with respect to the partition of $X$ as in \eqref{e:part}. Then, 
there exists a minimal generalised $\Gamma$-invariant Hilbert space 
linearisation of $K$, that we denote by $(\bK;\bD;\bV;\Psi)$, and 
which is unique modulo unitary equivalence in the following sense: whenever 
$(\bK^\prime;\bD^\prime;\bV^\prime;\Psi^\prime)$ is another generalised 
minimal $\Gamma$-invariant linearisation of $K$, 
with $\cD^\prime_s=\lin\{ V^\prime_x\cH_x\mid x\in X_s\}$, for all $s\in S$,
there exists a bundle of unitary operators $\bU=\{U_s\}_{s\in S}$, subject to the following conditions.
\begin{itemize} 
\item[(a)] $U_s\colon \cK_s\ra \cK_s^\prime$ and $U_s\cD_s\subseteq \cD_s^\prime$, for all $s\in S$, 
\item[(b)] $U_{c(\alpha)} \Psi(\alpha)|_{\cD_{d(\alpha)}}=\Psi(\alpha) U_{d(\alpha)}|_{\cD_{d(\alpha)}}$, 
for all $\alpha\in \Gamma$, 
\item[(c)]  $U_{a(x)} V_x=V_x^\prime$, for all $x\in X$.
\end{itemize}
\end{theorem}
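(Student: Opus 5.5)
The plan is to construct the linearisation fibrewise, using Theorem~\ref{t:kolmogorov} on each piece $X_s$ of the partition \eqref{e:part}. Then we check that the $\Gamma$-shift representation $\Psi$ of Lemma~\ref{l:psi} descends to the quotients. For each $s\in S$, the restriction $K_s:=K|_{X_s\times X_s}$ is an $\bH_s$-operator valued kernel on $X_s$. By partial positive semidefiniteness it is positive semidefinite. Let $\cN_s$ be the null space of $\langle\cdot,\cdot\rangle_{K_s}$ in $\cF_\FF^0(X_s;\bH_s)$. Let $(\cK_s;\Pi_s)$ be the Hilbert space induced by $(\cF_\FF^0(X_s;\bH_s);\langle\cdot,\cdot\rangle_{K_s})$, and put $\cD_s:=\Pi_s\cF_\FF^0(X_s;\bH_s)$, which is dense in $\cK_s$. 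For $x\in X_s$ define $V_x h:=\Pi_s(\delta_x h)$. The proof of Theorem~\ref{t:kolmogorov} then gives boundedness of $V_x$, the factorisation $K(x,y)=V_x^*V_y$ for $a(x)=a(y)$, and the minimality (GIL4).

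The key identity, obtained from \eqref{e:invariant} by expanding finite sums, is
\begin{equation*}
\langle \Psi(\alpha)f,g\rangle_{K_{c(\alpha)}}=\langle f,\Psi(\alpha^*)g\rangle_{K_{d(\alpha)}},\quad f\in\cF_\FF^0(X_{d(\alpha)};\bH_{d(\alpha)}),\ g\in \cF_\FF^0(X_{c(\alpha)};\bH_{c(\alpha)}).
\end{equation*}
It suffices to take $f=\delta_x h$ and $g=\delta_y k$. The left side is then $\langle K(y,\alpha\cdot x)h,k\rangle$. By invariance applied with the roles arranged correctly, using (I3) and the partial Hermitian property on the fibre $X_{d(\alpha)}$, this equals $\langle K(\alpha^*\cdot y,x)h,k\rangle$, which is the right side. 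I expect this index bookkeeping to be the main obstacle. Invariance is stated only for $\alpha\in\Gamma_{a(x)}^{a(y)}$, so one must verify that after taking adjoints via partial Hermiticity the required instance of \eqref{e:invariant} is exactly available. The assumption \eqref{e:orbit} makes all the Hilbert spaces involved coincide.

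From the identity, $f\in\cN_{d(\alpha)}$ implies $\Psi(\alpha)f\in\cN_{c(\alpha)}$, since $\Psi(\alpha)f$ is orthogonal to everything. Hence $\Psi(\alpha)$ induces a well-defined linear map $\cD_{d(\alpha)}\to\cD_{c(\alpha)}$, again denoted $\Psi(\alpha)$. Lemma~\ref{l:psi} passes to the quotient and gives (UR3). The identity also shows that $\cD_{c(\alpha)}\subseteq\dom(\Psi(\alpha)^*)$ and that $\Psi(\alpha)^*|_{\cD_{c(\alpha)}}=\Psi(\alpha^*)$, so (UR5) and (UR6) hold and $\Psi(\alpha)\in\cL^*(\cD_{d(\alpha)},\cD_{c(\alpha)})$. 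The relation $\Psi(\alpha)V_x=V_{\alpha\cdot x}$ is immediate from \eqref{e:psias}. This gives (GIL3).

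For uniqueness, given $(\bK';\bD';\bV';\Psi')$ minimal with $\cD'_s=\lin\{V'_x\cH_x\mid x\in X_s\}$, define $U_s\sum_i V_{x_i}h_i:=\sum_i V'_{x_i}h_i$ for $x_i\in X_s$. It is isometric because both sides have the same Gram data $K(x_j,x_i)$, hence well defined. It has dense range by minimality, so it extends to a unitary $\cK_s\to\cK'_s$ mapping $\cD_s$ onto $\cD'_s$; this gives (a) and (c). For (b), on generators we have $U_{c(\alpha)}\Psi(\alpha)V_xh=V'_{\alpha\cdot x}h=\Psi'(\alpha)V'_xh=\Psi'(\alpha)U_{d(\alpha)}V_xh$, and linearity extends this to all of $\cD_{d(\alpha)}$.
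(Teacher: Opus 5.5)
Your proposal is correct and follows essentially the same route as the paper: a fibrewise Kolmogorov construction on each $X_s$, then the adjoint identity $\langle\Psi(\alpha)f,g\rangle_{K}=\langle f,\Psi(\alpha^*)g\rangle_{K}$ obtained from invariance, then descent of the shift to the quotients, and finally uniqueness via the isometry $\sum V_{x_i}h_i\mapsto\sum V'_{x_i}h_i$. Your derivation of $\Psi(\alpha)\cN_{d(\alpha)}\subseteq\cN_{c(\alpha)}$ from the adjoint identity is a slightly cleaner substitute for the paper's convolution computation. The step you flag as the main obstacle is immediate and needs no Hermiticity: since $\alpha^*\in\Gamma_{a(y)}^{a(x)}$, applying \eqref{e:invariant} to $\alpha^*$ together with (I3) gives $K(\alpha^*\cdot y,x)=K(y,\alpha\cdot x)$ directly.
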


\begin{proof} For each $s\in S$, consider the Hilbert space bundle $X_s\ast \bH_s$, 
where $X_s$ is defined as in \eqref{e:xes} and $\bH_s:=\{\cH_x\}_{x\in X_s}$, and note that, by the very definition of partial 
positive definiteness of $K$ with respect to the partition $P_a=\{X_s\}_{s\in S}$, 
this makes the kernel $K^s:=K|_{X_s\times X_s}$
positive semidefinite. Then, we apply the construction of the $\bH_s$-valued Hilbert space linearisation of the kernel $K^s$, as 
described in the proof of the implication (a)$\Ra$(b), let us denote it by $(\cK_s,V_s)$. More precisely, on the vector space
$\cF_\FF^0(X_s;\bH_s)$, we have the convolution operator $C_{K^s}\colon \cF_\FF^0(X_s;\bH_s)\ra \cF_\FF(X_s;\bH_s)$,
defined as in \eqref{e:cekaw}, the positive semidefinite inner product $\langle\cdot,\cdot\rangle_{K^s}$. defined as in 
\eqref{e:ipk}, and then, considering the vector space
\begin{equation*}
\cN_{K^s}:=\{f\in \cF_\FF^0(X_s;\bH_s)\mid \langle f,f\rangle_{K^s}=0\}=\ker(C_{K^s}),
\end{equation*}
there exists a unique positive definite inner product  on 
$\cF_\FF^0(X_s;\bH_s)/\cN_{K^s}$, denoted by the same symbol 
$\langle\cdot,\cdot\rangle_{K^s}$, which is completed to a Hilbert space that we denote by $\cK_s$. We define
\begin{equation}\label{e:cedese}
\cD_s:= \cF_\FF^0(X_s;\bH_s)/\cN_{K^s},\quad s\in S.
\end{equation}
For each $x\in X_s$, the 
bounded operator $V_s(x)\colon \cH_x\ra\cK_s$ is defined by
\begin{equation*}
V_s(x)h:=\delta_x h+\cN_s\in \cF_\FF^0(X_s;\bH_s)/\cN_{K^s}\subseteq \cK_s.
\end{equation*}

We now consider the $\Gamma$-shift representation $(\mathrm{Id}_S,\Psi)$ defined at \eqref{e:psias}. First we prove that,
for every $f\in \cF_\FF^0(X_{d(\alpha)};\bH_{d(\alpha)})$ and $g\in  \cF_\FF^0(X_{c(\alpha)};\bH_{c(\alpha)})$, we have
\begin{equation}\label{e:lapsi}
\langle \Psi(\alpha)f,g\rangle_{K}=\langle f,\Psi(\alpha^*)\rangle_K.
\end{equation}
Indeed, due to linearity, it is sufficient to prove \eqref{e:lapsi} for $f=\delta_x h$ and $g=\delta_y g$, 
for $x\in X_{d(\alpha)}$, $h\in\cH_x$, $y\in X_{c(\alpha)}$, and $k\in \cH_y$. Then, on the one hand, 
\begin{align}
\langle \Psi(\alpha)f,g\rangle_{K} & = \langle \Psi(\alpha)\delta_x h,\delta_y k\rangle_K   
= \langle \delta_{\alpha\cdot x}h,\delta_y k\rangle_K \nonumber \\ 
& = \langle C_K \delta_{\alpha\cdot x}h,\delta_y k\rangle_0  \nonumber 
 = \langle \sum_{z\in X_{c(\alpha)}} K_z\delta_{\alpha \cdot x}(z)h,\delta_y k\rangle_0  \nonumber \\
& = \langle K_{\alpha \cdot x}h, \delta_{y} k\rangle_0 
 = \sum_{t\in X_{c(\alpha)}} \delta_{y}(t) \langle K(t,\alpha\cdot x)h,k\rangle_{\cH_{c(\alpha)}}  \nonumber \\
& = \langle K(y,\alpha\cdot x)h,k\rangle_{\cH_{c(\alpha)}}\nonumber \\
\intertext{which, taking into account that $K$ is $\Gamma$-invariant, equals}
&\label{e:lapsir} = \langle K(\alpha^*\cdot y,x)h,k\rangle_{\cH_{c(\alpha)}}.
\end{align}
On the other hand,
\begin{align}
\langle f,\Psi(\alpha^*)g\rangle_{K} & = \langle \delta_x h,\Psi(\alpha^*)\delta_y k\rangle_K   
= \langle \delta_{x}h,\delta_{\alpha^*\cdot y} k\rangle_K\nonumber  \\ 
& = \langle C_K \delta_{x}h,\delta_{\alpha^*\cdot y} k\rangle_0  \nonumber 
 = \langle \sum_{z\in X_{d(\alpha)}} K_z\delta_{x}(z)h,\delta_{\alpha^*\cdot y} k\rangle_0  \nonumber \\
& = \langle K_{x}h, \delta_{\alpha^*\cdot y} k\rangle_0  \nonumber 
 = \sum_{t\in X_{d(\alpha)}} \delta_{\alpha^*\cdot y}(t) \langle K(t,x)h,k\rangle_{\cH_{c(\alpha)}}  \nonumber \\
& = \langle K(\alpha^*\cdot y,x)h,k\rangle_{\cH_{c(\alpha)}}. \label{e:lapsira}
\end{align}
From \eqref{e:lapsira} and \eqref{e:lapsir}, we get \eqref{e:lapsi}.

Next we prove that
\begin{equation}\label{e:psian}
\Psi(\alpha)\cN_{K^{d(\alpha)}}\subseteq \cN_{K^{c(\alpha)}}.
\end{equation}
Indeed, let $f\in \cF_\FF^0(X_{d(\alpha)};\bH_{d(\alpha)})$ be arbitrary, hence, for some $x_1,\ldots,x_n\in X_{d(\alpha)}$ and 
$h_i\in\cH_{x_i}$, $f$ has the 
representation \eqref{e:fesum}. If $f\in \cN_{K^{d(\alpha)}}$, by definition, this means that, for any $x\in X_{d(\alpha)}$, we have
\begin{align} \nonumber
0 & = (C_{K^{d(\alpha)}}f)(x) = \sum_{y\in X_x} K^{d(\alpha)}(x,y)f(y) \nonumber \\
 & = \sum_{y\in X_{d(\alpha)}} K(x,y)\sum_{i=1}^n \delta_{x_i}(y)h_i 
 = \sum_{y\in X_{d(\alpha)}} \sum_{i=1}^n \delta_{x_i}(y)K(x,y)h_i \nonumber \\
& = \sum_{i=1}^n \sum_{y\in X_{d(\alpha)}} 
\delta_{x_i}(y)K(x,y)h_i=
\sum_{i=1}^n K(x,x_i)h_i.\label{e:zeco}
\end{align}
Then, for every $z\in X_{c(\alpha)}$, we have
\begin{align*}
(C_{K^{c(\alpha)}}\Psi(\alpha)f)(z) & = \bigr(C_{K^{c(\alpha)}}\sum_{i=1}^n \delta_{\alpha\cdot x_i}\bigl)(z) 
= \sum_{y\in X^{c(\alpha)}} K(z,y)\sum_{i=1}^n \delta_{\alpha\cdot x_i} h_i \\
& = \sum_{y\in X^{c(\alpha)}} \sum_{i=1}^n K(z,y)\delta_{\alpha\cdot x_i}h_i = \sum_{i=1}^n K(z,\alpha\cdot x_i)h_i,\\
\intertext{which, since $K$ is $\Gamma$-invariant, in the sense of \eqref{e:invariant}, and taking into account \eqref{e:zeco} and of the fact that $\alpha^*\cdot z\in X_{d(\alpha)}$, it equals}
& =\sum_{i=1}^n K(\alpha^*\cdot z,x_i)h_i=0.
\end{align*}
This proves \eqref{e:psian} and from here, it follows that $\Psi(\alpha)$ factors to a linear operator
\begin{equation}
\Psi(\alpha)\colon \cF_\FF^0(X_{d(\alpha)};\bH_{d(\alpha)})/\cN_{K^{d(\alpha)}}\ra  \cF_\FF^0(X_{c(\alpha)};\bH_{c(\alpha)})/
\cN_{K^{c(\alpha)}},
\end{equation}
such that \eqref{e:lapsi} is lifted as well. 

It remains to prove that $\Psi(\alpha) V_x=V_{\alpha\cdot x}$, for all $x\in X$ and all $\alpha\in\Gamma_{a(x)}$. But this is a 
consequence of the definitions of $\Psi$ and $V_x$, more precisely, for any $x\in X$, $\alpha\in\Gamma_{a(x)}$, and 
$h\in \cH_x$, we have
\begin{equation}\label{e:psivex}
\Psi(\alpha) V_xh=\Psi(\alpha) (\delta_x h+\cN_{K^{d(\alpha)}}) =\delta_{\alpha\cdot x}h+\cN_{K^{c(\alpha)}}=V_{\alpha\cdot x}h.
\end{equation}
We have proven that the quadruple $(\bK;\bD;\bV;\Psi)$ is a generalised
$\Gamma$-invariant Hilbert space linearisation of $K$.

For the uniqueness part, we follow the usual construction of the bundle of unitary operators, only that now this 
is done locally. Let $(\bK^\prime;\bD^\prime;\bV^\prime;\Psi^\prime)$ be 
another generalised minimal $\Gamma$-invariant linearisation of $K$ with 
$\cD^\prime_s=\lin\{ V^\prime_x\cH_x\mid x\in X_s\}$. Then, for any $s\in S$, $n,m\in \NN$, 
$x_1,\ldots,x_n,y_1,\ldots,y_m\in X_s$, $h_j\in \cH_{x_j}$ for all $j=1,\ldots,n$, and $g_i\in \cH_{y_i}$ for all
$i=1,\ldots,m$, we have
\begin{align*}
\langle \sum_{j=1}^n V_{x_j}h_j,\sum_{i=1}^m V_{y_i}g_i\rangle_{\cK_s} & 
=   \sum_{j=1}^n \sum_{i=1}^m \langle V_{x_j}h_j,V_{y_i}g_i\rangle_{\cK_s} 
= \sum_{j=1}^n \sum_{i=1}^m \langle V_{y_i}^*V_{x_j}h_j,g_i\rangle_{\cH_{y_i}} \\
& = \sum_{j=1}^n \sum_{i=1}^m \langle K(y_i,x_j)h_j,g_i\rangle_{\cH_{y_i}} \\
& = \sum_{j=1}^n \sum_{i=1}^m \langle V_{y_i}^{\prime *}V_{x_j}^\prime h_j,g_i\rangle_{\cH_{y_i}}
 = \langle \sum_{j=1}^n V_{x_j}^\prime h_j,\sum_{i=1}^m V_{y_i}^\prime g_i\rangle_{\cK_s}. 
\end{align*}
This shows that the operator 
\begin{equation*}U_s \colon \lin\{V_x h\mid x\in X_s,\ h\in \cH_{x}\}\ra
 \lin\{V_x^\prime h\mid x\in X_s,\ h\in \cH_{x}\}\end{equation*} 
 is correctly defined by
 \begin{equation*}
 U_s (\sum_{j=1}^n V_{x_j}h_j):= \sum_{j=1}^n V_{x_j}^\prime h_j,
 \end{equation*}
 for arbitrary $n\in\NN$, $x_1,\ldots,x_n\in X_s$, and $h_j\in\cH_{x_j}$, for all $j=1,\ldots,n$, and it is an isometry 
 with dense range. Consequently, it has a unique extension to a unitary operator 
 $U_s\colon \cK_s\ra\cK^\prime_s$.
 
 Finally, by this definition, it is clear that $U_{a(x)} V_x=V_x^\prime$ for all $x\in X$. Also, for any 
 $\alpha\in \Gamma$ and any $x\in X_{d(\alpha)}$, we have
 \begin{equation*}
 U_{c(\alpha)} \Psi(\alpha) V_{x}= U_{c(\alpha)} V_{\alpha\cdot x}=V^\prime_{\alpha\cdot x} = \Psi^\prime(\alpha)
 V_{x}.\qedhere
 \end{equation*}
\end{proof}

Our second goal is to show that $\Gamma$-invariant kernels yield certain generalised
$*$-representations of the $*$-semigroupoid $\Gamma$ on certain bundles of reproducing kernel Hilbert spaces, which should 
be a counterpart to Theorem~\ref{t:invariantul}, in view of Remark~\ref{r:kolrk}. However, we will provide an independent proof 
which will shed some light on a duality that exists between Hilbert space linearisations and reproducing kernel 
Hilbert spaces.

\begin{theorem}\label{t:invariantu}
Let $K$ be a $\Gamma$-invariant kernel, in the sense that both \eqref{e:orbit} and \eqref{e:invariant} hold, 
which is partially positive semidefinite with respect to the 
partition of $X$ as in \eqref{e:part}.
Then, there exists a unique generalised $*$-representation $(\mathrm{id}_S,\Phi)$ in the sense of 
Definition~\ref{d:urep}, where 
$\Phi\colon \Gamma\ra \Gamma_{\bR,\bD}$, see Example~\ref{ex:semuh},
$\bR=\{\cR_{K^s}\}_{s\in S}$, with $K^s=K|_{X_s\times X_s}$ and $\cR_{K^s}$ denoting the reproducing kernel
Hilbert space defined by the kernel $K^s$, and $\bD=\{\cD_s\}_{s\in S}$, with $\cD_s=\lin\{K^s_x\mid x\in X_s\}$,
such that, for any $\alpha\in \Gamma$ and any $x\in X_{d(\alpha)}$, we have 
\begin{equation}\label{e:fiva}
K^{c(\alpha)}_{\alpha \cdot x}=\Phi(\alpha) K^{d(\alpha)}_x.
\end{equation}
\end{theorem}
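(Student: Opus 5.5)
The plan is to build $\Phi$ directly on the reproducing kernel Hilbert spaces, mimicking the construction of $\Psi$ in the proof of Theorem~\ref{t:invariantul}, but working with sections instead of equivalence classes. For each $s\in S$, the kernel $K^s=K|_{X_s\times X_s}$ is positive semidefinite, so Theorem~\ref{t:rkh} gives the reproducing kernel Hilbert space $\cR_{K^s}$. By Proposition~\ref{p:rkhs}(c), $\cD_s=\lin\{K^s_xh\mid x\in X_s,\ h\in\cH_x\}$ is dense in $\cR_{K^s}$. (Here $\cD_s$ should be read as the span of the sections $K^s_xh$.) For $\alpha\in\Gamma$ I would define $\Phi(\alpha)\colon\cD_{d(\alpha)}\ra\cD_{c(\alpha)}$ by
\begin{equation*}
\Phi(\alpha)\sum_{i=1}^n K^{d(\alpha)}_{x_i}h_i:=\sum_{i=1}^n K^{c(\alpha)}_{\alpha\cdot x_i}h_i ,
\end{equation*}
which makes sense because of \eqref{e:orbit}. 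Then \eqref{e:fiva} holds by construction, and uniqueness is immediate: any $\Phi$ satisfying \eqref{e:fiva} is determined on the generators of $\cD_{d(\alpha)}$, hence on $\cD_{d(\alpha)}$ by linearity.

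The main obstacle is well-definedness. I must show that $\sum_i K^{d(\alpha)}_{x_i}h_i=0$ as a section on $X_{d(\alpha)}$ implies $\sum_i K^{c(\alpha)}_{\alpha\cdot x_i}h_i=0$ as a section on $X_{c(\alpha)}$. Evaluate the latter at $z\in X_{c(\alpha)}$. Since $\alpha^*\in\Gamma_{c(\alpha)}$, the point $\alpha^*\cdot z$ lies in $X_{d(\alpha)}$, and $\Gamma$-invariance \eqref{e:invariant} gives
\begin{equation*}
\sum_i K(z,\alpha\cdot x_i)h_i=\sum_i K(\alpha^*\cdot z,x_i)h_i=0 .
\end{equation*}
To justify this use of invariance, I would combine the partial Hermitian property with \eqref{e:invariant} applied to the appropriate pair. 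I expect the index bookkeeping, namely checking that $\alpha\in\Gamma^{a(z)}_{a(x)}$ in the required form, to be the delicate point. This is exactly the computation \eqref{e:zeco} from the proof of Theorem~\ref{t:invariantul}, so it transfers.

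Next comes the adjoint relation. Using the reproducing property (rk3), for $x\in X_{d(\alpha)}$, $y\in X_{c(\alpha)}$, $h\in\cH_x$ and $k\in\cH_y$,
\begin{equation*}
\langle\Phi(\alpha)K_xh,K_yk\rangle=\langle K(y,\alpha\cdot x)h,k\rangle=\langle K(\alpha^*\cdot y,x)h,k\rangle=\langle K_xh,\Phi(\alpha^*)K_yk\rangle .
\end{equation*}
By linearity this gives $\langle\Phi(\alpha)f,g\rangle=\langle f,\Phi(\alpha^*)g\rangle$ for all $f\in\cD_{d(\alpha)}$ and $g\in\cD_{c(\alpha)}$. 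Consequently $\cD_{c(\alpha)}\subseteq\dom(\Phi(\alpha)^*)$ and $\Phi(\alpha)^*|_{\cD_{c(\alpha)}}=\Phi(\alpha^*)$, which maps $\cD_{c(\alpha)}$ into $\cD_{d(\alpha)}$. Thus $\Phi(\alpha)\in\cL^*(\cD_{d(\alpha)},\cD_{c(\alpha)})$, and (UR5) and (UR6) hold.

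Finally, multiplicativity (UR3) follows on generators from axiom (A3):
\begin{equation*}
\Phi(\alpha\beta)K_xh=K_{(\alpha\beta)\cdot x}h=K_{\alpha\cdot(\beta\cdot x)}h=\Phi(\alpha)\Phi(\beta)K_xh ,
\end{equation*}
as in Lemma~\ref{l:psi}. Condition (UR2) holds by construction. Since the aggregation map is $\mathrm{id}_S$, the representation is automatically orthogonal.
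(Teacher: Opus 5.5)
Your proposal is correct and is essentially the paper's argument. The paper defines $\Phi(\alpha)f:=(f_{\alpha^*\cdot z})_{z\in X_{c(\alpha)}}$, which is automatically well defined and multiplicative, and then uses the identity $K(\alpha^*\cdot z,x)=K(z,\alpha\cdot x)$ to get the range inclusion in $\cD_{c(\alpha)}$ and \eqref{e:fiva}; you define $\Phi(\alpha)$ on generators and spend that same identity on well-definedness, so the two operators coincide. Two small points in your favour: the identity is just \eqref{e:invariant} applied to $\alpha^*\in\Gamma^{a(x)}_{a(z)}$, with no Hermitian property needed, and your direct uniqueness argument from \eqref{e:fiva} is cleaner than the paper's appeal to the uniqueness part of Theorem~\ref{t:invariantul}.
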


\begin{proof} For a fixed but arbitrary $s\in S$, we consider the set $X_s=a^{-1}(\{s\})$ which is nonempty, since 
the anchor map $a$ is supposed to be surjective. Then, the kernel $K^s=K|_{X_s\times X_s}$ is positive 
semidefinite, as assumed in \eqref{e:partpsd}. By Theorem~\ref{t:rkh}, let $\cR_{K^s}$ be the reproducing kernel 
Hilbert space, with respect to the bundle of Hilbert spaces $\bH_s=\{\cH_x\mid x\in X_s\}$. We now use 
notations and facts from Section~\ref{s:psovk}. Let $\cF_\FF(X_s;\bH_s)$ denote the vector space of all 
sections, that is, $f\colon X_s\ra \bigcup_{x\in X_s} \cH_x$ such that $f_x=f(x)\in\cH_x$ for all $x\in X_s$, 
and let $\cF_\FF^0(X_s,\bH_s)$ be its subspace made up by all sections with finite support. Then, letting 
$C_{K^s}$ denote the convolution operator defined as in \eqref{e:conv} and associated to the kernel $K^s$, we have
\begin{align} \nonumber
\cD_s &:=\lin \{K^s_x\mid x\in X_s\}=\ran(C_{K^s}) \\
& = \{f\in \cF_\FF(X_s,\bH_s)\mid f=C_{K^s} g,\mbox{ for some }g\in \cF_\FF^0(X_s,\bH_s)\}. \label{e:des}
\end{align}
Since $\cD_s$ is dense in $\cR_{K^s}$, it follows that the bundle $\bR=\{\cR_{K^s}\}_{s\in S}$ of Hilbert spaces 
and the bundle $\bD=\{\cD_s\}_{s\in S}$ of subspaces 
satisfy the assumptions as in Example~\ref{ex:semuh} and hence,
we can define the $*$-semigroupoid $\Gamma_{\bR,\bD}$.

Let $\alpha\in \Gamma$. We define the linear operator $\Phi(\alpha)\colon \cD_{d(\alpha)}\ra \cD_{c(\alpha)}$ by
\begin{equation}\label{e:phi}
\Phi(\alpha)f:=(f_{\alpha^*\cdot x})_{x\in X_{c(\alpha)}},\quad f\in \cD_{d(\alpha)}.
\end{equation}
In the following we prove that $(\mathrm{Id}_S,\Phi)$ is a generalised $*$-representation of 
the $*$-semigroupoid $\Gamma$ on $\Gamma_{\bR,\bD}$, in the sense of Definition~\ref{d:urep}, and
we show that $\Phi$ has the property \eqref{e:fiva}.

Indeed, with notation as in \eqref{e:deltax}, an arbitrary 
section $f\in \cD_{c(\alpha)}$ has the representation
\begin{equation*}
f=\sum_{j=1}^n C_{K^{c(\alpha)}} \hat h_j,
\end{equation*}
for some $n\in \NN$ and $h_1,\ldots,h_n$, such that, $h_j\in \cH_{y_j}$, for a certain $y_j\in X_{c(\alpha)}$, 
$j=1,\ldots,n$.
Then, for each $x\in X_{c(\alpha)}$, we have
\begin{align*}
f_{\alpha^*\cdot x} & = \sum_{j=1}^n K_{y_j}(\alpha^*\cdot x)h_j = \sum_{j=1}^n K(\alpha^*\cdot x,y_j) h_j \\
\intertext{and, using property \eqref{e:invariant}, this equals}
& = \sum_{j=1}^n K(x,\alpha\cdot y_j)h_j = \sum_{j=1}^n K_{\alpha\cdot y_j}(x)h_j.
\end{align*}
In view of the definition of the subspaces $\cD_s$ for $s\in S$, 
this shows that the range of $\Phi(\alpha)$ is contained in $\cD_{c(\alpha)}$, hence the axiom (UR2) is satisfied.
Also, in particular, for $n=1$, we get that \eqref{e:fiva} holds. 

We now prove that $\Phi$ is multiplicative in the sense of the axiom (UR3). To this 
end, let $(\alpha,\beta)\in \Gamma^{(2)}$, $f\in \cD_{d(\beta)}$ and $g=\Phi(\beta)f$. The latter means that, for 
any $y\in X_{c(\beta)}$, we have $g_y=f_{\beta^*\cdot y}$ and then, $\Phi(\alpha)g=(g_{\alpha^*\cdot z})_{z\in 
X_{c(\alpha)}}$. From here we get that, for any $z\in X_{c(\alpha)}$, we have
\begin{equation}\label{e:galp}
g_{\alpha^*\cdot z}=f_{\beta^*\alpha^*\cdot z}=f_{(\alpha\beta)^*\cdot z}=(\Phi(\alpha\beta)f)_z,
\end{equation}
which proves that $\Phi(\alpha\beta)|_{\cD_{d(\beta)}}=\Phi(\alpha)\Phi(\beta)|_{\cD_{d(\beta)}}$.

For arbitrary $\alpha\in \Gamma$, 
$f\in \cD_{d(\alpha)}$, and $f^\prime \in \cD_{c(\alpha)}$, we have $f=C_{K^{d(\alpha)}} g$ and $f^\prime =
C_{K^{c(\alpha)}} g^\prime$, for some $g\in \cF_\FF^0(X_{d(\alpha)};\bH_{d(\alpha)})$ and
$g^\prime\in \cF_\FF^0(X_{c(\alpha)};\bH_{c(\alpha)})$. Then,
\begin{align}\nonumber
\langle \Phi(\alpha)f,f^\prime\rangle_{\cR_{K^{c(\alpha)}}} & = 
\sum_{y\in X_{c(\alpha)}} \langle f_{\alpha^*\cdot y},g^\prime_y\rangle_{\cH_y} \\
& =\sum_{y\in X_{c(\alpha)}} \sum_{z\in X_{d(\alpha)}} \langle K(\alpha^*\cdot y,z)g_z,g^\prime_y\rangle_{\cH_y} \nonumber\\
\intertext{which, in view of \eqref{e:invariant}, equals}
& =\sum_{y\in X_{c(\alpha)}} \sum_{z\in X_{d(\alpha)}} \langle K(y,\alpha\cdot z)g_z,g^\prime_y\rangle_{\cH_y}\nonumber \\
\intertext{which, since $K$ is Hermitian, equals}
& =\sum_{y\in X_{c(\alpha)}} \sum_{z\in X_{d(\alpha)}} \langle g_z,K(\alpha\cdot z,y)g^\prime_y\rangle_{\cH_z} \nonumber\\
\intertext{and, since both sums have a finite number of terms, we can change their order}
&  =\sum_{z\in X_{d(\alpha)}} \sum_{y\in X_{c(\alpha)}} \langle g_z,K((\alpha^*)^* g^\prime)g^\prime_y\rangle_{\cH_z}\nonumber 
\\ & = \sum_{z\in X_{d(\alpha)}} \langle g_z,f^\prime_{\alpha\cdot z}\rangle_{\cH_z} = \langle f,\Phi(\alpha^*)f^\prime
\rangle_{\cR_{K^{d(\alpha)}}}.\label{e:lapa}
\end{align}
This shows that the axioms (UR6) and (UR7) hold as well.

The uniqueness part is performed with all details in the previous theorem, which is the counterpart of this theorem in terms of 
minimal linearisations, see Remark~\ref{r:kolrk}, so we do not repeat it.
\end{proof}

\subsection{$\Gamma$-Invariant Hilbert Space Linearisations.}\label{ss:ihsl}
The next question that we address here refers to the possibility that the generalised $*$-representation $\Phi$ in 
Theorem~\ref{t:invariantu} is performed through bounded operators.

\begin{definition}\label{d:invariantlinb}
Let $K$ be a Hermitian and $\Gamma$-invariant kernel with respect to the bundle of Hilbert spaces 
$X\ast \bH$. A triple $(\bK;\bV;\Phi)$ is called a
\emph{$\Gamma$-invariant Hilbert space linearisation of $K$} if the following properties hold.
\begin{itemize}
\item[(IL1)] $\bK=\{\cK_s\}_{s\in S}$ is a bundle of Hilbert spaces.
\item[(IL2)] $\bV=\{V_x\}_{x\in X}$ is a bundle of bounded linear operators
$V_x\colon \cH_x\ra \cK_{a(x)}$, for each $x\in X$, such that $K(x,y)=V_x^*V_y$, for all $x,y\in X$ such 
that $a(x)=a(y)$.
\item[(IL3)] $(\mathrm{Id}_S;\Phi)$ is a $*$-representation of $\Gamma$ on $\Gamma_{\bK}$,
such that $\Phi(\alpha) V_x=V_{\alpha\cdot x}$, for all $x\in X$ and all $\alpha\in\Gamma_{a(x)}$.
\end{itemize}
The $\Gamma$-invariant Hilbert space linearisation  
$(\bK;\bV;\Phi)$ is called \emph{minimal} if
\begin{itemize}
\item[(IL4)] For each $s\in S$ the Hilbert space $\cK_s$ is the closed span of $\{V_x \cH_x\mid x\in X,\ a(x)=s\}$.
\end{itemize}
\end{definition}

When $\Gamma$-invariant representations are expected, a boundedness condition is natural to show up. This condition first 
appeared in the seminal paper of B.~Sz.-Nagy \cite{BSzNagy}, as well as in many of its generalisations, 
\cite{ConstantinescuGheondea2}, \cite{AyGh2015}, \cite{AyGh2017}, \cite{AyGh2019}, \cite{GheondeaUdrea}. 
We first introduce a definition that will be used in different places, from now on. 

\begin{lemma}\label{l:bst} Let $X\ast \bH$ be a bundle of
Hilbert spaces over the same field $\FF$, indexed on the nonempty set $X$, let $\Gamma$ be a $*$-semigroupoid and 
$(a;\cdot)$ a left action of $\Gamma$ on $X$ such that \eqref{e:orbit} holds, that is, for arbitrary $x\in X$, letting 
$\Gamma\cdot x=\{\alpha\cdot x\mid \alpha\in \Gamma_{a(x)}\}\cup\{x\}$ be the orbit of $x$, the 
bundle $\{\cH_y\}_{y\in \Gamma\cdot x}$ is trivial. The following two assertions are equivalent.

\nr{1}  For all 
$\alpha\in\Gamma$, there exists $M_\alpha\geq 0$ such that, for any section $g$ from 
$\cF_\FF^0(X_{d(\alpha)};\bH_{d(\alpha)})$, 
we have
\begin{equation}\label{e:bounded}
\sum_{x,y\in X_{d(\alpha)}} \langle K(\alpha\cdot x,\alpha\cdot y)g_y,g_x\rangle_{\cH_{x}} \leq
M_\alpha \sum_{x,y\in X_{d(\alpha)}} \langle K(x,y)g_y,g_x\rangle_{\cH_{x}}.
\end{equation}

\nr{2} For all $\alpha\in \Gamma$, the $\Gamma$-shift operator $\Psi(\alpha)$, defined as in \eqref{e:psias}, is bounded with 
respect to the seminorms defined by the positive semidefinite inner product $\langle\cdot,\cdot\rangle_K$ on 
$\cF_\FF^0(X_{d(\alpha)};\bH_{d(\alpha)})$ and $\cF_\FF^0(X_{c(\alpha)};\bH_{c(\alpha)})$, respectively.

If, in addition, $K$ is supposed to be $\Gamma$-invariant, in the sense of definition as in \eqref{e:invariant}, 
then they are equivalent with the following assertion, too.

\nr{3} For all $\alpha\in\Gamma$, the operator $\Phi(\alpha)$ defined at \eqref{e:phi} is bounded, with respect to the strong 
topologies of the reproducing kernel Hilbert spaces $\cR_{K^{d(\alpha)}}$ and $\cR_{K^{c(\alpha)}}$.
\end{lemma}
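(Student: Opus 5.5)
The equivalence (1)$\Leftrightarrow$(2) amounts to rewriting inequality \eqref{e:bounded} in terms of $\Psi(\alpha)$. I would then prove (2)$\Leftrightarrow$(3) by identifying $\Phi(\alpha)$ with the operator induced by $\Psi(\alpha)$ on the reproducing kernel space.

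\emph{(1)$\Leftrightarrow$(2).} Fix $\alpha\in\Gamma$ and a section $g=\sum_{i}\delta_{x_i}h_i\in\cF_\FF^0(X_{d(\alpha)};\bH_{d(\alpha)})$. By \eqref{e:psias}, $\Psi(\alpha)g=\sum_i\delta_{\alpha\cdot x_i}h_i$. Applying \eqref{e:ipk} to the kernel $K^{c(\alpha)}$ gives
\begin{equation*}
\langle\Psi(\alpha)g,\Psi(\alpha)g\rangle_{K}=\sum_{x,y\in X_{d(\alpha)}}\langle K(\alpha\cdot x,\alpha\cdot y)g_y,g_x\rangle_{\cH_x},
\end{equation*}
which is the left-hand side of \eqref{e:bounded}. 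Here \eqref{e:orbit} ensures $\cH_{\alpha\cdot x}=\cH_x$. One point needs care: the map $x\mapsto\alpha\cdot x$ need not be injective. Two points $x_i\neq x_j$ may have the same image, and then $\Psi(\alpha)g$ has the single value $h_i+h_j$ at that image. The identity above survives this, because $\langle\cdot,\cdot\rangle_K$ is sesquilinear and we may expand the double sum over the summands $\delta_{\alpha\cdot x_i}h_i$ without first merging them. The right-hand side of \eqref{e:bounded} is $M_\alpha\langle g,g\rangle_K$. So \eqref{e:bounded} says exactly $\|\Psi(\alpha)g\|_K^2\le M_\alpha\|g\|_K^2$, i.e.\ boundedness with respect to the two seminorms, with constant $M_\alpha^{1/2}$.

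\emph{(2)$\Leftrightarrow$(3), assuming $\Gamma$-invariance.} For each $s$ I would use the unitary identification of $\cF_\FF^0(X_s;\bH_s)/\cN_{K^s}$ with $\cD_s=\ran(C_{K^s})\subseteq\cR_{K^s}$, given by $g+\cN_{K^s}\mapsto C_{K^s}g$. The reproducing property together with \eqref{e:convzero} gives $\|C_{K^s}g\|_{\cR_{K^s}}^2=\langle g,g\rangle_{K^s}$, and $\ker C_{K^s}=\cN_{K^s}$, so this map is well defined and isometric. The key step is the intertwining relation
\begin{equation*}
\Phi(\alpha)C_{K^{d(\alpha)}}g=C_{K^{c(\alpha)}}\Psi(\alpha)g .
\end{equation*}
It suffices to check this on $g=\delta_xh$, where it reads $\Phi(\alpha)K_xh=K_{\alpha\cdot x}h$, which is \eqref{e:fiva}. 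That relation was established in the proof of Theorem~\ref{t:invariantu} using \eqref{e:invariant}. Both sides are linear in $g$, so the relation extends to all finitely supported $g$.

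Consequently, for $f=C_{K^{d(\alpha)}}g\in\cD_{d(\alpha)}$,
\begin{equation*}
\|\Phi(\alpha)f\|_{\cR_{K^{c(\alpha)}}}=\|\Psi(\alpha)g\|_K \quad\text{and}\quad \|f\|_{\cR_{K^{d(\alpha)}}}=\|g\|_K .
\end{equation*}
Hence $\Psi(\alpha)$ is bounded for the seminorms if and only if $\Phi(\alpha)$ is bounded on the dense domain $\cD_{d(\alpha)}$, with the same bound. In that case $\Phi(\alpha)$ extends by continuity to a bounded operator between the reproducing kernel Hilbert spaces.

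\emph{Main obstacle.} Nothing here is deep. The step needing most care is the bookkeeping: the possibly non-injective action in the first part, and the correct use of \eqref{e:invariant} through \eqref{e:fiva} to obtain the intertwining relation. Without invariance, $\Phi(\alpha)$ is not even known to map $\cD_{d(\alpha)}$ into $\cD_{c(\alpha)}$, which is why (3) is only included under that hypothesis.
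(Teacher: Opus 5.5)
Your proposal is correct and is essentially the paper's proof. The (1)$\Leftrightarrow$(2) part is the same expansion of $\langle\Psi(\alpha)g,\Psi(\alpha)g\rangle_K$ by sesquilinearity. For (3), the paper directly computes $\|\Phi(\alpha)\sum_i K_{x_i}h_i\|^2=\sum_{i,j}\langle K(\alpha\cdot x_i,\alpha\cdot x_j)h_j,h_i\rangle$ using $\Gamma$-invariance. That is exactly your intertwining $\Phi(\alpha)C_{K^{d(\alpha)}}=C_{K^{c(\alpha)}}\Psi(\alpha)$ combined with $\|C_{K^s}g\|_{\cR_{K^s}}=\|g\|_K$. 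The only difference is that you tie (3) to (2) rather than to (1). This makes the converse, which the paper dismisses as ``the reasoning is reversible'', explicit, and with equal bounds.
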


\begin{proof} (1)$\Leftrightarrow$(2). This follows if we prove that, for any $\alpha\in \Gamma$ and any 
$f,g\in\cF_\FF^0(X_{d(\alpha)};\bH_{d(\alpha)})$, we have
\begin{equation}\label{e:lapas}
\langle \Psi(\alpha)f,\Psi(\alpha)g\rangle_K = \sum_{x\in X_{d(\alpha)}}\sum_{y\in X_{d(\alpha)}}
 \langle K(\alpha\cdot y,\alpha\cdot x)f_x,g_y\rangle_{\cH_y}.
\end{equation}
In view of linearity, it is sufficient to prove \eqref{e:lapas} for $f=\delta_x h$ and $g=\delta_y k$, 
with $x,y\in X_{d(\alpha)}$, $h\in \cH_{x}$ and $k\in \cH_y$. Indeed,
\begin{align*}
\langle \Psi(\alpha)f,\Psi(\alpha)g\rangle_K & = \langle \Psi(\alpha)\delta_x h,\Psi(\alpha)\delta_y k\rangle_K 
=\langle \delta_{\alpha\cdot x}h,\delta_{\alpha\cdot y}k\rangle_K \\
& = \langle C_K\delta_{\alpha\cdot x}h,\delta_{\alpha\cdot y}k\rangle_0
= \langle \sum_{z\in X_{d(\alpha)}} K_z \delta_{\alpha\cdot x}(z)h,\delta_{\alpha\cdot y}k\rangle_0 \\
& = \langle K_{\alpha\cdot x}h,\delta_{\alpha\cdot y}k\rangle_0
= \sum_{z\in X_{d(\alpha)}} \langle K_{\alpha\cdot x}(z),\delta_{\alpha\cdot y}(z)k\rangle_{\cH_y} \\
& = \langle K(\alpha\cdot y,\alpha\cdot x)h,k\rangle_{\cH_y}.
\end{align*}

(1)$\Leftrightarrow$(3). Let $\alpha,\beta\in\Gamma$  and let $f=K_xh$, for some $x\in X_{d(\alpha)}$, $h\in\cH_x$, and 
$g=K_y k$, for some $y\in X_{d(\beta)}$ and $k\in \cH_y$, be arbitrary. Then
\begin{align}\nonumber
\langle \Phi(\alpha)f,\Phi(\beta)g\rangle_\cR & = \langle \Phi(\alpha)K_xh,\Phi(\beta)K_yk\rangle_\cR 
= \langle K_x(\alpha^*\cdot)h,K_y(\beta^*\cdot)k\rangle_\cR \\
\intertext{which, taking into account of the $\Gamma$-invariance of $K$, equals}
& = \langle K_{\alpha\cdot x}h,K_{\beta_\cdot y}k\rangle\cR= \langle K(\beta\cdot y,\alpha\cdot x)h,k\rangle_{\cH_y}.\label{e:pafib}
\end{align}
If $f\in \lin\{K_xh\mid x\in X_{d(\alpha)},\ h\in\cH_x\}=\ran(C_{K^{d(\alpha)}})$, then there exists $x_1,\ldots,x_n\in X_{d(\alpha)}$ 
and $h_i\in \cH_{x_i}$, for $i=1,\ldots,n$, such that
\begin{equation*}
f=\sum_{i=1}^n K_{x_i}h_i.
\end{equation*}
From \eqref{e:pafib} we get
\begin{align}\nonumber
\langle \Phi(\alpha)f,\Phi(\alpha)f\rangle_{\cR_{K^{c(\alpha)}}} & = \sum_{i,j=1}^n \langle \Phi(\alpha)K_{x_j}h_j,\Phi(\alpha)K_{x_i}h_i\rangle_{\cR_{K^{c(\alpha)}}} \\
& = \sum_{i,j=1}^n \langle K(\alpha\cdot x_i,\alpha\cdot x_j)h_j,h_i\rangle_{\cH_{x_i}}\nonumber \\
\intertext{which, in view of the assumption (1), is dominated by}
& \leq M_\alpha \sum_{i,j=1}^n \langle K(x_i,x_j)h_i,h_j\rangle_{\cH_{x_i}} \nonumber \\
& = M_\alpha 
\sum_{i,j=1}^n \langle K_{x_i}h_i,K_{x_j}h_j\rangle_{\cR_{K^{d(\alpha)}}} \nonumber \\
& = M_\alpha \bigl\langle \sum_{i=1}^n K_{x_i}h_i,\sum_{j=1}^n K_{x_j}h_j\
\bigr\rangle_{\cR_{K^{d(\alpha)}}} \nonumber \\
&  = M_\alpha \langle f,f\rangle_{\cR_{K^{d(\alpha)}}}.\label{e:lafif}
\end{align}
This means that $\Phi(\alpha)$ is bounded with respect to the norm induced by the inner products of ${\cR_{K^{c(\alpha)}}}$ and
${\cR_{K^{d(\alpha)}}}$, respectively.

For the converse implication we only have to observe that the reasoning is reversible.
\end{proof}

 A kernel $K$
with property (1) from Lemma~\ref{l:bst} is called \emph{of bounded shift type}. Let us observe that, in order to formulate the 
concept of a kernel of bounded shift type, it is sufficient that $\Gamma$ is a semigroupoid only.

\begin{theorem}\label{t:invariantb}
Let $K$ be a $\Gamma$-invariant kernel which is partially positive semidefinite, in the sense that it is partially Hermitian and
\eqref{e:partpsd} holds, with respect to the partition of $X$ as in \eqref{e:part}. The following assertions are equivalent.

\nr{1} $K$ is of bounded shift type.

\nr{2} There exists a unique $*$-representation $(\mathrm{Id}_S,\Phi)$, in the sense of 
Definition~\ref{d:serep}, where 
$\Phi\colon \Gamma\ra \Gamma_{\bR}$, see Example~\ref{ex:segah},
$\bR=\{\cR_{K^s}\}_{s\in S}$, with $K^s=K|_{X_s\times X_s}$ and $\cR_{K^s}$ denoting the reproducing kernel
Hilbert space defined by the kernel $K^s$, such that, for any $\alpha\in \Gamma$ and any $x\in X_{d(\alpha)}$, the property
\eqref{e:fiva} holds, that is,
\begin{equation*}
K^{c(\alpha)}_{\alpha \cdot x}=\Phi(\alpha) K^{d(\alpha)}_x.
\end{equation*}

\nr{3} There exists a minimal $\Gamma$-invariant linearisation of $K$, that we denote by 
$(\bK;\bV;\Psi)$, and 
which is unique modulo unitary equivalence, in the following sense: whenever 
$(\bK^\prime;\bV^\prime;\Psi^\prime)$ is another generalised 
minimal $\Gamma$-invariant linearisation of $K$, 
there exists a bundle of unitary operators $\bU=\{U_s\}_{s\in S}$, subject to the following conditions.
\begin{itemize}
\item[(a)] $U_s\colon \cK_s\ra \cK_s^\prime$, for all $s\in S$.
\item[(b)] $U_{c(\alpha)} \Psi(\alpha)=\Psi^\prime(\alpha) U_{d(\alpha)}$, 
for all $\alpha\in \Gamma$.
\item[(c)]  $U_{a(x)} V_x=V_x^\prime$, for all $x\in X$.
\end{itemize}
\end{theorem}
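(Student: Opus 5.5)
The plan is to prove (1)$\Ra$(2)$\Ra$(3)$\Ra$(1). Throughout we rely on Theorem~\ref{t:invariantu}, Theorem~\ref{t:invariantul} and Lemma~\ref{l:bst}.

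\emph{(1)$\Ra$(2).} Theorem~\ref{t:invariantu} gives the generalised $*$-representation $(\mathrm{Id}_S,\Phi)$ on $\Gamma_{\bR,\bD}$, with $\cD_s=\lin\{K^s_x\mid x\in X_s\}$ dense in $\cR_{K^s}$, and it satisfies \eqref{e:fiva}. Assume $K$ is of bounded shift type. Then Lemma~\ref{l:bst}, implication (1)$\Ra$(3), shows that each $\Phi(\alpha)\colon\cD_{d(\alpha)}\ra\cD_{c(\alpha)}$ is bounded, with norm at most $M_\alpha^{1/2}$. We extend it by continuity to $\overline{\Phi}(\alpha)\in\cB(\cR_{K^{d(\alpha)}},\cR_{K^{c(\alpha)}})$.

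The identities (UR3) and (UR6) hold on the dense subspaces. Since both sides are bounded, they pass to the closure, giving $\overline\Phi(\alpha\beta)=\overline\Phi(\alpha)\overline\Phi(\beta)$. For the adjoint, \eqref{e:lapa} gives $\langle\overline\Phi(\alpha)f,f'\rangle=\langle f,\overline\Phi(\alpha^*)f'\rangle$ on dense sets, hence everywhere, so $\overline\Phi(\alpha)^*=\overline\Phi(\alpha^*)$. Uniqueness follows because any bounded $\Phi'$ satisfying \eqref{e:fiva} agrees with $\overline\Phi$ on the total set $\{K_x^{d(\alpha)}h\}$, by Proposition~\ref{p:rkhs}(c).

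\emph{(2)$\Ra$(3).} By Remark~\ref{r:kolrk}, take $\cK_s:=\cR_{K^s}$ and $V_x h:=K^{a(x)}_xh$, and set $\Psi:=\Phi$. Then (IL2) is Proposition~\ref{p:rkhs}(a), and (IL3) is exactly \eqref{e:fiva}. Minimality (IL4) is Proposition~\ref{p:rkhs}(c).

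For uniqueness, repeat the local construction from the proof of Theorem~\ref{t:invariantul}. Define $U_s\bigl(\sum_j V_{x_j}h_j\bigr):=\sum_j V'_{x_j}h_j$. This is isometric by (IL2) for both linearisations, so it extends to a unitary $U_s\colon\cK_s\ra\cK'_s$ by minimality of both. Then for $x\in X_{d(\alpha)}$ we have $U_{c(\alpha)}\Psi(\alpha)V_x=U_{c(\alpha)}V_{\alpha\cdot x}=V'_{\alpha\cdot x}=\Psi'(\alpha)U_{d(\alpha)}V_x$. Since both sides of (b) are now bounded operators, equality on the total set $\{V_x\cH_x\}$ gives (b) on all of $\cK_{d(\alpha)}$. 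This boundedness is the improvement over the generalised case.

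\emph{(3)$\Ra$(1).} Given a $\Gamma$-invariant linearisation with bounded $\Psi(\alpha)$, take $g\in\cF^0_\FF(X_{d(\alpha)};\bH_{d(\alpha)})$ and set $u:=\sum_y V_yg_y$. Using $K(\alpha\cdot x,\alpha\cdot y)=V_{\alpha\cdot x}^*V_{\alpha\cdot y}=V_x^*\Psi(\alpha)^*\Psi(\alpha)V_y$, the left side of \eqref{e:bounded} equals $\|\Psi(\alpha)u\|^2$. This is at most $\|\Psi(\alpha)\|^2\|u\|^2$, and $\|u\|^2$ is the right-hand sum. So \eqref{e:bounded} holds with $M_\alpha=\|\Psi(\alpha)\|^2$, and $K$ is of bounded shift type.

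The main obstacle is bookkeeping rather than analysis. One must check that the extended operators genuinely land in $\Gamma_{\bR}$ with the correct fibres. One must also check that the adjoint relation survives extension: $\Phi(\alpha)$ is initially defined only on $\cD_{d(\alpha)}$, so (UR5)/(UR6) must be combined with boundedness of $\Phi(\alpha^*)$ to identify $\overline\Phi(\alpha)^*$ exactly, and not merely an extension of it.
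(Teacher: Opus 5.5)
Your proof is correct and is essentially the paper's argument. The paper simply says to rerun the proofs of Theorems~\ref{t:invariantul} and~\ref{t:invariantu} with Lemma~\ref{l:bst} supplying boundedness of $\Phi(\alpha)$ and $\Psi(\alpha)$; your density extension, transfer of multiplicativity and adjoints to the closures, uniqueness on total sets, and direct estimate with $M_\alpha=\|\Psi(\alpha)\|^2$ for (3)$\Ra$(1) are exactly the details that terse proof leaves implicit, and reading the competitor $(\bK';\bV';\Psi')$ in (3) as a bounded linearisation (despite the word ``generalised'' in the statement) is the reading under which (b) makes sense on all of $\cK_{d(\alpha)}$.
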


\begin{proof}  We inspect the proofs of Theorem~\ref{t:invariantul} and 
of Theorem~\ref{t:invariantu} and observe that, the only thing we have to prove is the boundedness of the
operators $\Phi(\alpha)$ and $\Psi(\alpha)$, respectively, for which we use Lemma~\ref{l:bst}.
\end{proof}

As in \cite{GheondeaUdrea}, the boundedness condition \eqref{e:bounded} holds automatically 
if $\Gamma$ is an inverse semigroupoid.

\begin{corollary}\label{c:invsem} Assume that $\Gamma$ is an inverse semigroupoid and let 
$K$ be a $\Gamma$-invariant kernel which is partially positive semidefinite, in the sense 
of \eqref{e:partpsd}, with respect to the partition of $X$ as in \eqref{e:part}. Then assertions (1) through (3) in 
Theorem~\ref{t:invariantb} are true.
\end{corollary}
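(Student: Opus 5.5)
By Theorem~\ref{t:invariantb}, it suffices to show that $K$ is of bounded shift type, that is, assertion (1). The plan is to show directly that every $\Phi(\alpha)$ (equivalently every $\Psi(\alpha)$) is bounded, with $M_\alpha=1$. Here the involution is $\alpha^*=\alpha^\prime$, the unique generalised inverse, so that $\alpha\alpha^*\alpha=\alpha$ and $\alpha^*\alpha\alpha^*=\alpha^*$. The element $e:=\alpha^*\alpha$ lies in $\Gamma_{d(\alpha)}^{d(\alpha)}$ and is a self-adjoint idempotent: $e^*=e$ and $ee=\alpha^*\alpha\alpha^*\alpha=\alpha^*\alpha$.

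The key computation works with the generalised representation $\Phi$ of Theorem~\ref{t:invariantu} on the dense subspaces $\cD_s$. For $f\in\cD_{d(\alpha)}$, axioms (UR3) and (UR6), in the form of \eqref{e:lapa}, give
\begin{equation*}
\|\Phi(\alpha)f\|^2=\langle \Phi(\alpha^*)\Phi(\alpha)f,f\rangle=\langle \Phi(e)f,f\rangle .
\end{equation*}
Set $P:=\Phi(e)$, an operator on $\cD_{d(\alpha)}$. It satisfies $P^2=P$ by (UR3), and $\langle Pf,g\rangle=\langle f,Pg\rangle$ by \eqref{e:lapa} together with $e^*=e$. Hence
\begin{equation*}
\langle Pf,f\rangle=\langle P^2f,f\rangle=\|Pf\|^2.
\end{equation*}
By Cauchy--Schwarz, $\|Pf\|^2=\langle Pf,f\rangle\le\|Pf\|\,\|f\|$, so $\|Pf\|\le\|f\|$. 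Combining the two displays,
\begin{equation*}
\|\Phi(\alpha)f\|^2=\|Pf\|^2\le\|f\|^2 .
\end{equation*}
Thus $\Phi(\alpha)$ is a contraction on $\cD_{d(\alpha)}$.

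By Lemma~\ref{l:bst}, item (3) holds, and therefore item (1) holds with $M_\alpha=1$. Theorem~\ref{t:invariantb} then yields (2) and (3).

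The main obstacle is making sure the algebraic identities are legitimate on the domains $\cD_s$. Specifically, $\Phi(\alpha^*)\Phi(\alpha)=\Phi(\alpha^*\alpha)$ on $\cD_{d(\alpha)}$ requires $(\alpha^*,\alpha)\in\Gamma^{(2)}$, which holds since $d(\alpha^*)=c(\alpha)$. The symmetry of $\Phi(e)$ requires the adjoint identity \eqref{e:lapa} applied with $\alpha$ replaced by $e$. Both are already established in Theorem~\ref{t:invariantu}.

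Alternatively, the same argument can be run directly at the level of kernels, verifying \eqref{e:bounded}. Using \eqref{e:invariant}, rewrite $K(\alpha\cdot x,\alpha\cdot y)=K(e\cdot x,y)$. Then apply the Cauchy--Schwarz inequality for the positive form $\langle\cdot,\cdot\rangle_{K^{d(\alpha)}}$ to $\Psi(e)g$ and $g$, noting that $\langle\Psi(e)g,\Psi(e)g\rangle_K=\langle\Psi(e)g,g\rangle_K$ because $e\cdot(e\cdot x)=e\cdot x$.
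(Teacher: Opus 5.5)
Your proof is correct and follows essentially the same route as the paper. Both arguments rest on the fact that $\Phi(\alpha^*)\Phi(\alpha)=\Phi(\alpha^*\alpha)$ is a symmetric idempotent on $\cD_{d(\alpha)}$, which forces $\|\Phi(\alpha)f\|\le\|f\|$; the differences are only cosmetic: you verify idempotence directly in $\Gamma$ and get contractivity from Cauchy--Schwarz rather than from the paper's orthogonal kernel/range splitting (which yields that $\Phi(\alpha)$ is a partial isometry), and you make explicit the passage through Lemma~\ref{l:bst} to assertion (1) with $M_\alpha=1$.
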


\begin{proof} By Theorem~\ref{t:invariantu}, 
there exists a unique generalised $*$-representation $(\mathrm{id}_S,\Phi)$ in the sense of 
Definition~\ref{d:urep}, where 
$\Phi\colon \Gamma\ra \Gamma_{\bR,\bD}$, where $\Gamma_{\bR,\bD}$ is defined as in Example~\ref{ex:semuh},
$\bR=\{\cR_{K^s}\}_{s\in S}$, $K^s=K|_{X_s\times X_s}$, and $\cR_{K^s}$ denoting the reproducing kernel
Hilbert space defined by the kernel $K^s$, and $\bD=\{\cD_s\}_{s\in S}$, with $\cD_s=\lin\{K^s_x\mid x\in X_s\}$,
such that for any $\alpha\in \Gamma$ and any $x\in X_{d(\alpha)}$ we have 
$K^{c(\alpha)}_{\alpha \cdot x}=\Phi(\alpha) K^{d(\alpha)}_x$.

For any element $\alpha\in\Gamma$, we consider the operator 
$\Phi(\alpha)\colon \cD_{d(\alpha)}\ra \cD_{c(\alpha)}$ and we want to prove
that it is bounded.
Indeed,  the operator 
$\Phi(\alpha^*)\colon \cG_{c(\alpha)}\ra\cG_{d(\alpha)}$ has the property 
$\Phi(\alpha^*)\subseteq \Phi(\alpha)^*$, when viewing $\Phi(\alpha)$ as a 
densely defined linear operator from the Hilbert space $\cK_{d(\alpha)}$ to the Hilbert space 
$\cK_{c(\alpha)}$. Then, since $\Gamma$ is an 
inverse semigroupoid, hence $\alpha^*\alpha\alpha^*=\alpha^*$, 
it follows that, for any $h\in\cD_{c(\alpha)}$, we have
\begin{equation*}
\Phi(\alpha)^*\Phi(\alpha)\Phi(\alpha)^*h=\Phi(\alpha^*)\Phi(\alpha)
\Phi(\alpha^*)h=\Phi(\alpha^*\alpha\alpha^*)h=\Phi(\alpha^*)h,
\end{equation*}
hence, for any $k\in\cD_{d(\alpha)}$, letting $h=\Phi(\alpha)k$, we have
\begin{equation*}
\Phi(\alpha)^*\Phi(\alpha)\Phi(\alpha)^*\Phi(\alpha)k=\Phi(\alpha^*)\Phi(\alpha)k,
\end{equation*}
that is, the operator $\Phi(\alpha)^*\Phi(\alpha)\colon \cD_{d(\alpha)}\ra\cD_{d(\alpha)}$ is a 
projection. This implies that $\cD_{d(\alpha)}$ has a decomposition
\begin{equation*}
\cD_{d(\alpha)}=\ker(\Phi(\alpha)^*\Phi(\alpha))\dot+ \ran(\Phi(\alpha)^*\Phi(\alpha)).
\end{equation*}
Also, since $\Phi(\alpha)^*\Phi(\alpha)h=\Phi(\alpha^*\alpha)h$ holds for
all $h\in\cD_{d(\alpha)}$, it follows that
\begin{equation*}
\ker(\Phi(\alpha)^*\Phi(\alpha))\perp \ran(\Phi(\alpha)^*\Phi(\alpha)).
\end{equation*}
From here we get that $\ker(\Phi(\alpha)^*\Phi(\alpha))=\ker(\Phi(\alpha))$. Then, letting
$h\in\cD_{d(\alpha)}$ be arbitrary, hence $h=h_0+h_1$, where $h_0\in\ker(\Phi(\alpha))$ and
$h_1\in\ran(\Phi(\alpha)^*\Phi(\alpha))$, we have
\begin{align*}
\langle \Phi(\alpha)h,\Phi(\alpha)h\rangle_{\cK_{\tau(c(\alpha))}} &  = 
\langle \Phi(\alpha)h_1,\Phi(\alpha)h_1\rangle_{\cK_{\tau(d(\alpha))}} \\
& = \langle \Phi(\alpha)^*\Phi(\alpha)h_1, h_1\rangle_{\cK_{\tau(d(\alpha))}} \\
& =\langle h_1,h_1\rangle_{\cH_{\tau(d(\alpha))}}\leq \langle h,h\rangle_{\cK_{\tau(d(\alpha))}},
\end{align*}
where we have taken into account that $\Phi(\alpha)^*\Phi(\alpha)$ is a projection and acts like
identity operator on its range. Thus, the operator $\Phi(\alpha)$ is a partial isometry, hence bounded,
and it can be extended to a linear bounded operator $\cL_{d(\alpha)}\ra \cL_{c(\alpha)}$, which is a partial isometry. \end{proof}

Since any groupoid is an inverse semigroupoid, the previous corollary holds, as a particular case, for groupoids.

\section{Hermitian Kernels}\label{s:hk}

Hermitian kernels which are not semidefinite cannot produce Hilbert spaces but they may produce spaces with indefinite 
inner product. The most tractable class of indefinite inner product spaces is that of Krein spaces. 
But, if one wants to produce a Krein 
space from a Hermitian kernel, an additional obstruction shows up. This obstruction is explained by the fact that the linear span 
of positive semidefinite kernels is, in general, smaller than the real vector space of Hermitian kernels. Most of these things are 
already investigated in \cite{ConstantinescuGheondea1} and \cite{ConstantinescuGheondea2}. Here, we are interested in those 
Hermitian kernels which 
produce Krein spaces and which are invariant under actions of $*$-semigroupoids. In the following, we briefly 
review what is known about this problem and then, we prove the new results that hold for the actions of $*$-semigroupoids.

\subsection{Krein Spaces and Their Linear Operators} \label{ss:kstlo}
We briefly review the definitions and fundamental facts on Krein spaces and their linear operators, following \cite{GheondeaE}. 
Given a vector space $\cX$ over the field $\FF$, an \emph{inner product} on $\cX$ is a map 
$[\cdot,\cdot]\colon \cX\times \cX\ra \FF$ which is linear in the first variable and (conjugate, in case $\FF=\CC$) symmetric. 
Then it is conjugate linear in the second variable and the inner square $[x,x]$ takes only real values, but they may be 
negative, null, or positive. A vector $x\in\cX$ such that $[x,x]<0$ is called \emph{negative}, if $[x,x]=0$ it is called \emph{neutral}, 
and if $[x,x]>0$ it is called \emph{positive}. Given a linear manifold $\cL\subseteq \cX$, it is called \emph{negative}, 
\emph{neutral}, \emph{positive}, respectively, if all nonnull vectors in $\cL$ are of that kind. The linear manifold $\cL$ 
is called \emph{nonnegative} or \emph{nonpositive}, respectively, if $[x,x]\geq 0$, or $[x,x]\leq 0$, for all $x\in \cL$. Two vectors $x,y\in \cX$ are called \emph{orthogonal}, denoted by $x\perp y$, if $[x,y]=0$.
We denote by $\cL^\perp:=\{y\in \cX\mid [x,y]=0\mbox{ for all }x\in\cL\}$ the \emph{orthogonal companion} of $\cL$ and by 
$\cL^0:=\cL\cap \cL^\perp$ its \emph{isotropic} part. If $\cL^0$ is the null space then $\cL$ is called \emph{nondegenerate} and, 
it is called \emph{degenerate} in the opposite case. The inner product space $(\cX,[\cdot,\cdot])$ is called \emph{indefinite} if it 
contains positive vectors and negative vectors. In this case, nonnull neutral vectors exist as well.

\begin{theorem}\label{t:desc}
Let $(\cH,[\cdot,\cdot])$ be an inner product space. Then the 
following statements are equivalent.

{\em (a)} There exist two linear manifolds $\cH^+$ and $\cH^-$ such that
$\cH^+\perp\cH^-$, $\cH=\cH^++\cH^-$, and both $(\cH^+,[\cdot,\cdot])$ and
$(\cH^-,-[\cdot,\cdot])$ are Hilbert spaces.

{\em (b)} There exists a linear operator $J$ on $\cH$ such that $J^2=I$
and the equality
\begin{equation}\label{e:jinp}\langle x,y\rangle_J=[Jx,y],\quad x,y\in\cH,\end{equation}
defines a positive definite inner product in $\cH$ such that
$(\cH,\langle\cdot,\cdot\rangle_J)$ is a Hilbert space.

{\em (c)} There exists a positive definite inner product
$\langle\cdot,\cdot\rangle$ on $\cH$ 
such that $(\cH,\langle\cdot,\cdot\rangle)$ is a Hilbert space and the
associated norm $\|\cdot\|$ satisfies
\begin{equation}\label{e:norp}\| x\|=\sup_{\|y\|\leq1}|[x,y]|,\quad x\in\cH.\end{equation}
\end{theorem}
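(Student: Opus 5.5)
The plan is to prove the cycle (a)$\Ra$(b)$\Ra$(c)$\Ra$(a).

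For (a)$\Ra$(b), I start from the decomposition $\cH=\cH^+ + \cH^-$ and first check that the sum is direct. If $x\in\cH^+\cap\cH^-$, then $[x,x]\geq 0$ and $[x,x]\leq 0$, so $[x,x]=0$. Since $(\cH^+,[\cdot,\cdot])$ is a Hilbert space, this forces $x=0$. Writing $x=x^++x^-$ uniquely, I define $Jx:=x^+-x^-$, so that $J^2=I$. Using $\cH^+\perp\cH^-$, I compute
\begin{equation*}
[Jx,y]=[x^+,y^+]-[x^-,y^-].
\end{equation*}
This is the orthogonal sum of the two Hilbert inner products, so it is positive definite. Completeness follows because $\cH$ with $\langle\cdot,\cdot\rangle_J$ is the Hilbert direct sum $\cH^+\oplus\cH^-$, and that sum is complete.

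For (b)$\Ra$(c), I take $\langle\cdot,\cdot\rangle:=\langle\cdot,\cdot\rangle_J$. The first task is to show that $J$ is $\langle\cdot,\cdot\rangle_J$-selfadjoint and unitary. From \eqref{e:jinp} and $J^2=I$ we get $[x,y]=\langle Jx,y\rangle_J$. Then
\begin{equation*}
\langle Jx,y\rangle_J=[x,y]=\ol{[y,x]}=\ol{\langle Jy,x\rangle_J}=\langle x,Jy\rangle_J,
\end{equation*}
so $J=J^*$ (bounded by Hellinger--Toeplitz), and together with $J^2=I$ this makes $J$ unitary. It follows that
\begin{equation*}
\sup_{\|y\|\leq 1}|[x,y]|=\sup_{\|y\|\leq 1}|\langle Jx,y\rangle|=\|Jx\|=\|x\|,
\end{equation*}
which is \eqref{e:norp}.

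For (c)$\Ra$(a), the Riesz representation gives a bounded operator $G$ with $[x,y]=\langle Gx,y\rangle$. Hermitian symmetry of $[\cdot,\cdot]$ makes $G$ selfadjoint, and \eqref{e:norp} says exactly $\|Gx\|=\|x\|$ for all $x$. So $G$ is a selfadjoint isometry. Its range is closed, and $\ker G^*=\ker G=0$, so $G$ is surjective and hence unitary. Being selfadjoint and unitary, $G$ satisfies $G^2=G^*G=I$. Let $P_\pm=(I\pm G)/2$ be the two complementary orthogonal projections, and set $\cH^\pm=\ran P_\pm$. These are closed subspaces with $\cH=\cH^+\oplus\cH^-$. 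On $\cH^\pm$ we have $[x,y]=\pm\langle x,y\rangle$, so each $\cH^\pm$ is a Hilbert space for $\pm[\cdot,\cdot]$. Finally, $[\cH^+,\cH^-]=\langle G\cH^+,\cH^-\rangle=\langle\cH^+,\cH^-\rangle=0$.

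The main obstacle is the step (c)$\Ra$(a), specifically turning the norm condition \eqref{e:norp} into $G^2=I$. The point to get right is that the isometry property together with selfadjointness yields surjectivity; once $G^2=I$ is established, everything else is routine.
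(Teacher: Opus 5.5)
The paper does not prove this theorem; it is recalled without proof from the author's monograph on Krein spaces, so there is no argument in the paper to compare yours against. Your cycle (a)$\Ra$(b)$\Ra$(c)$\Ra$(a) is correct and is the standard argument:
\begin{itemize}
\item in (a)$\Ra$(b), the map $x\mapsto(x^+,x^-)$ is an isometric isomorphism of $(\cH,\langle\cdot,\cdot\rangle_J)$ onto the Hilbert direct sum $\cH^+\oplus\cH^-$;
\item in (b)$\Ra$(c), $J$ is a symmetry of the Hilbert space $(\cH,\langle\cdot,\cdot\rangle_J)$;
\item in (c)$\Ra$(a), the Gram operator $G$ is a symmetry, and its eigenspaces $\ker(I\mp G)$ give the fundamental decomposition.
\end{itemize}

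Three small refinements.

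First, before invoking the Riesz representation in (c)$\Ra$(a), say explicitly that \eqref{e:norp} gives $|[x,y]|\leq\|x\|\,\|y\|$. This bound on the form is what guarantees that $G$ exists and is bounded.

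Second, the step you flag as the main obstacle is easier than your route suggests. An isometry already satisfies $G^*G=I$, since $G^*G-I$ is selfadjoint with vanishing quadratic form $\langle (G^*G-I)x,x\rangle=\|Gx\|^2-\|x\|^2=0$; this works over both $\RR$ and $\CC$. Combined with $G=G^*$, this gives $G^2=I$ at once, so the closed-range and surjectivity argument, while correct, is not needed.

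Third, in (b)$\Ra$(c), Hellinger--Toeplitz is also unnecessary. Once you have shown $\langle Jx,y\rangle_J=\langle x,Jy\rangle_J$, you get $\|Jx\|_J^2=\langle x,J^2x\rangle_J=\|x\|_J^2$, so $J$ is isometric and hence bounded directly.
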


An inner product space $(\cH,[\cdot,\cdot])$, satisfying one (hence, all) of the conditions (a), (b), or (c) from 
Theorem~\ref{t:desc}, is called a \emph{Krein space}. In this case, a decomposition as in item (a) is called a \emph{fundamental 
decomposition}, a linear operator $J$ as in item (b) is called a \emph{fundamental symmetry}, while a Hilbert space norm 
$\|\cdot\|$ as in item (c), is called a \emph{unitary norm}. These objects are in one-to-one correspondence with each other. In 
addition, all unitary norms produce the same topology on $\cH$, and we call it the \emph{strong topology}.

Note that, with respect to the Hilbert space structure $(\cH,\langle\cdot,\cdot\rangle_J)$, a fundamental symmetry $J$
is actually a 
symmetry, that is, a selfadjoint and unitary operator, $J^*=J^{-1}=J$, on the Hilbert space $\cH$. Also, 
\begin{equation*}
[Jx,y]=\langle x,y\rangle_J,\quad x,y\in \cH.
\end{equation*}

Let $(\cK_1,[\cdot,\cdot])$ and $(\cK_2,[\cdot,\cdot])$ be Krein
spaces. We consider a linear operator $T$ with domain
$\dom(T)$ dense in $\cK_1$ and range $\ran(T)$ in $\cK_2$. The 
{\em (Krein space) adjoint operator} $T^\sharp $ of $T$ is defined as follows:
 $\dom(T^\sharp )$ is the set of all 
$y\in\cK_2$
for which there exists $z\in\cK_1$, such that $[Tx,y]=[x,z]$, 
for all $x\in\dom(T)$, and for such a $y\in\cK_2$ we let
$T^\sharp y=z$. Equivalently,
\begin{equation}\label{e:adjop}
\dom(T^\sharp)=\{y\in\cK_2\mid [Tx,y]=[x,z]\mbox{ for some }
z\in\cK_1\mbox{ and all }x\in\dom(T)\},
\end{equation}
and then, since $\dom(T)$ is dense, the element $z\in\cK_1$ as in \eqref{e:adjop} is uniquely
determined by $y$ and we let $T^\sharp y=z$.

Let the fundamental symmetries $J_1$ and $J_2$ on $\cK_1$ and $\cK_2$, respectively, be fixed and, consequently, the Hilbert 
spaces $(\cK_1;\langle \cdot,\cdot\rangle_{J_1})$ and $(\cK_2;\langle\cdot,\cdot\rangle_{J_2})$ be fixed as well. Also, let 
the Hilbert space adjoint $T^*$ be defined as usual. The connection between the two adjoint operators is given by
\begin{equation}\label{e:tesh}
\dom(T^\sharp)=J_2\dom(T^*),\quad T^\sharp =J_1 T^*J_2.
\end{equation}

Since strong topologies on Krein spaces are unique, it makes perfectly sense to consider the vector space $\cB(\cK_1,\cK_2)$ of
all linear and bounded operators $T\colon \cK_1\ra \cK_2$. 
Then $\cB(\cK_1,\cK_2)\ni T\mapsto T^\sharp\in\cB(\cK_2,\cK_1)$ has 
all the required properties of an \emph{involution}. In particular, the concepts of \emph{normal}, \emph{selfadjoint}, 
\emph{orthogonal projection}, \emph{isometric}, and \emph{unitary operator}, can be defined with obvious definitions.

An operator $T\in\cB(\cK)$, for some Krein space $\cK$, is called \emph{fundamentally reducible} if there exists a fundamental 
symmetry $J$ on $\cK$ such that $JT=TJ$, equivalently, that $T^\sharp=T^*$, where $T^*$ denotes the Hilbert space adjoint of 
$T$ with respect to $(\cK,\langle\cdot,\cdot\rangle_J)$. Equivalently, if $\cK=\cK^++\cK^-$ is the fundamental decomposition of $\cK$ 
induced by $J$, this means that, the representation of $T$, as a $2\times 2$ operator block matrix with respect to this 
fundamental decomposition, is diagonal. This property is important since it reduces the study of $T$, for example from the point 
of view of spectral properties, to the direct orthogonal sum of the two operators from the diagonal. In case of special operators, 
like normal, selfadjoint, or unitary operators, fundamental reducibility reduces the study of these operators to the study of direct 
orthogonal sums of operators on Hilbert spaces and of the same kind, that is, normal, selfadjoint, or unitary operators, 
respectively.

\subsection{Krein Spaces Induced by Selfadjoint Operators}\label{ss:ksiso}
In this section we need a construction and some of its properties for Krein spaces induced by bounded selfadjoint operators. This 
is a generalisation of a similar construction for positive operators, in which case it is called a \emph{renorming} of a Hilbert space. 
We follow Section 6.1 in \cite{GheondeaE}.
Let $(\cH,\langle\cdot,\cdot\rangle)$ be a Hilbert space and $A=A^*\in\cB(\cH)$. A \emph{Krein space induced} by $A$ is, 
by definition, a pair $(\cK,\Pi)$ subject to the following conditions.
\begin{itemize}
\item[(IKS1)] $(\cK,[\cdot,\cdot])$ is a Krein space.
\item[(IKS2)] $\Pi\colon \cH\ra\cK$ is a linear bounded operator with dense range.
\item[(IKS3)] $\langle Ah,k\rangle=[\Pi h,\Pi k]$, for all $h,k\in \cH$.
\end{itemize}

Two Krein spaces $(\cK_1,\Pi_1)$ and $(\cK_2,\Pi_2)$, induced by the same operator $A$, 
are called \emph{unitarily equivalent} if
there exists a Krein space unitary operator $U\colon\cK_1\ra\cK_2$, such that $U\Pi_1=\Pi_2$.

\begin{example}\label{ex:kas} Let $A=A^*\in\cB(\cH)$ for some Hilbert space $\cH$. Then $A=A_+-A_-$, where 
$A_\pm\in\cB^+(\cH)$ and are 
uniquely determined by the condition $A_+A_-=0$. More precisely, if $E$ is the spectral measure of $A$ 
and we let $E_+=E(0,+\infty)$ and $E_-=E(-\infty,0)$, then $A_+=AE+$ and $A_-=AE_-$. Then we consider the Hilbert spaces 
$\cK_\pm$ induced by $A_\pm$, more precisely, $\cK_\pm$ is the completion of $\ran(E_\pm)$ to a Hilbert space
with respect to positive definite inner product $\langle A_\pm\cdot,\cdot\rangle_\cH$, hence with respect to the norm 
$\|A^{1/2}_\pm\cdot\|$, and let $\cK_A:=\cK_-\oplus \cK_+$. In particular, $\cK_A$ is the Hilbert space completion of 
$\cH\ominus \ker(A)$ with respect to the norm $\||A|^{1/2}\cdot\|_\cH$. 
Letting $J$ be the symmetry on $\cK_A$ defined by this orthogonal 
decomposition of $\cK_A$, we view $\cK_A$ as a Krein space with fundamental symmetry $J$.  More precisely, the polar 
decomposition $A=S_A |A|$ yields a selfadjoint partial isometry $S_A$ and $J$ is the unique lifting of $S_A$, compressed at 
$\cH\ominus\ker(A)=E_+\cH\oplus E_+\cH$, to $\cK$. In particular, the inner product $[\cdot,\cdot]_{\cK_A}$ on 
$\cH\ominus\ker(A)$ acts by
\begin{equation*}
[h,k]_{\cK_A}=\langle A_+h_+,k_+\rangle_{\cH}-\langle A_-h_-,k_-\rangle_{\cH}=\langle Ah,k\rangle_\cH,\quad h,k\in\cH\ominus\ker(A),
\end{equation*}
where, for an arbitrary vector $h\in\cH\ominus\ker(A)$, we have the unique decomposition $h=h_++h_-$, with 
$h_\pm\in E_\pm\cH$.

The operator $\Pi_A\colon \cH\ra\cK$ is 
defined as follows. For each $h\in\cH$, we have the unique decomposition $h=h_-+h_0+h_+$, where $h_\pm\in\ran(E_\pm)$ 
and $h_0\in\ker(A)$. Then $\Pi_A h:=h_-+h_+\in \cH\ominus\ker(A)\subseteq \cK$ 
and it is easy to see that $\Pi_A$ is bounded and has dense range. 
Also, for each $h,k\in \cH$, with notation as before, we have
\begin{equation*}
[\Pi_A h,\Pi_A k]= [h_-+h_+,k_-+k_+]=\langle A_+h_+,k_+\rangle_\cH-\langle A_-h_-,k_-\rangle_\cH=\langle Ah,k\rangle_\cH. 
\end{equation*}
All these show that $(\cK_A,\Pi_A)$ is a Krein space induced by $A$.
\end{example}

\begin{example}\label{ex:kasu} With notation as in Example~\ref{ex:kas}, define $\cB_A:=\ran(|A|^{1/2})$ endowed with the inner 
product $\langle\cdot,\cdot\rangle_{\cB_A}$ defined as follows. For each $h,k\in\cH\ominus \ker(A)$ we let
\begin{equation*}
\langle |A|^{1/2}h,|A|^{1/2}\rangle_{\cB_A}:=\langle h,k\rangle_\cH.
\end{equation*} 
It is easy to see that, with respect to this inner product, $\cB_A$ is a Hilbert space in such a way that 
$|A|^{1/2}\colon \cH\ominus \ker(A)\ra \cB_A$ is a unitary operator.
Then we observe that, 
in the polar decomposition $A=S_A |A|$, the partial isometry $S_A$ yields a symmetry on $\cB_A$. More precisely, we have
\begin{equation*}
S_Ah= h_+-h_-,\quad h=h_-+h_0+h_+\in\cH,\ h_\pm\in E_\pm\cH,\ h_0\in\ker(A).
\end{equation*}
Since $S_A$ commutes with $A$, it commutes with both $|A|$ and $|A|^{1/2}$, hence it leaves 
$\cB_A=\ran(|A|^{1/2})$ 
invariant and $S_A$ is a symmetry with respect to the Hilbert space $(\cB_A,\|\cdot\|_{\cB_A})$. Then, letting
\begin{equation*}
[|A|^{1/2}h,|A|^{1/2}k]_{\cB_A}:= \langle S_Ah,k\rangle_\cH,\quad h,k\in\cH\ominus \ker(A),
\end{equation*}
it follows that $(\cB_A,[\cdot,\cdot]_{\cB_A})$ is a Krein space such that the compression of $S_A$ to $\cB_A$ is 
a fundamental symmetry.

Let $\Pi_{\cB_A}\colon \cH\ra \cB_A$ be defined by
\begin{equation*}
\Pi_{\cB_A}h:=|A|h,\quad h\in\cH,
\end{equation*}
and note that, 
since $|A|=|A|^{1/2}|A|^{1/2}$, we have $\ran(|A|)\subseteq \ran(|A|^{1/2})$, hence $\Pi_{\cB_A}$ is correctly defined. In addition, 
for each $h\in\cH$, we have
\begin{equation*}
\|\Pi_{\cB_A}h\|_{\cB_A}=\||A|h\|_{\cB_A}=\||A|\|^{1/2}|A|^{1/2}h\|_{\cB_A}=\||A|^{1/2}h\|_\cH\leq \||A|^{1/2} \|h\|_\cH,
\end{equation*}
hence $\Pi_{\cB_A}$ is a bounded linear operator. 
Since $\ran(|A|)$ is dense in $\ran(|A|^{1/2})$ it follows that the range of $\Pi_{\cB_A}$ is dense in $\cB_A$. Also, for any 
$h,k\in\cH$ we have
\begin{align*}
[\Pi_{\cB_A}h,\Pi_{\cB_A}k]_{\cB_A} & = [ |A|h,|A|k]_{\cB_A}=\langle S_A |A|^{1/2}|A|^{1/2}h,|A|^{1/2}|A|^{1/2}k\rangle_{\cB_A}\\
& = \langle  |A|^{1/2}S_A|A|^{1/2}h,|A|^{1/2}|A|^{1/2}k\rangle_{\cB_A} \\
& = \langle S_A |A|^{1/2}h,|A|^{1/2}k\rangle_{\cH}=\langle Ah,k\rangle_{\cH}.
\end{align*}
All these prove that $(\cB_A,\Pi_{\cB_A})$ is a Krein space induced by $A$.

But this Krein space induced by $A$ has two more properties. First, it is continuously included in the Hilbert space $\cH$, which 
follows from the fact that the strong topology on $\cB_A$ was defined in such a way that the operator $|A|^{1/2}$, 
when restricted to $\cH\ominus \ker(A$), is a unitary operator of Hilbert spaces. Second, the two Krein spaces $(\cK_A,\Pi_A)$ 
and $(\cB_A,\Pi_{\cB_A})$ are unitarily equivalent. To see this, consider the operator 
$U\colon \cH\ominus \ker(A)\ra \ran(|A|^{1/2})$ defined by $Uh:=|A|h$, $h\in\cH\ominus\ker(A)$. Then, 
for arbitrary $h,k\in\cH\ominus \ker(A)$, we have
\begin{equation*}
[Uh,Uk]_{\cB_A}=[|A|h,|A|k]_{\cB_A}=\langle S_A |A|^{1/2}h,|A|^{1/2}k\rangle_\cH=\langle Ah,k\rangle_\cH=[h,k]_{\cK_A},
\end{equation*}
which shows that $U$ is isometric with respect to the indefinite inner products of $\cB_A$ and $\cK_A$. The 
operator $U$ is also bounded because
\begin{equation*}
\|Uh\|_{\cB_A} = \||A|h\|_{\cB_A}=\||A|^{1/2}h\|_\cH,\quad h\in \cH\ominus\ker(A),
\end{equation*}
in particular, $U$ is isometric with respect to the Hilbert space norms of $\cK_A$ and $\cB_A$, hence bounded. Since, clearly, 
$U$ is densely defined and with dense range, it has a unique extension to a Krein space unitary operator 
$U\colon \cK_A\ra\cB_A$. Also, 
\begin{equation*}
U\Pi_{\cK_A}h=|A| \Pi_{\cK_A}h=|A|(h_++h_-)=|A|h=\Pi_{\cB_A}h,\quad h\in \cH.
\end{equation*}
In conclusion, $U$ establishes a unitary equivalence of the Krein spaces $(\cK_A,\Pi_{A})$ and $(\cB_A,\Pi_{\cB_A})$ induced 
by $A$.
\end{example}

The question of uniqueness modulo a unitary equivalence of Krein spaces induced by selfadjoint operators has the following 
answer, see Theorem~6.1.9 in \cite{GheondeaE}. This result is actually equivalent with a uniqueness result on Krein spaces 
continuously included in a given Hilbert or Krein space, cf.~T.~Hara \cite{Hara}, and variants of it were proven 
in \cite{ConstantinescuGheondea0} and by M.A.~Dritschel in
\cite{Dritschel}. A stronger result, saying that, in case 
nonuniqueness holds, one can find infinitely many nonunitarily equivalent induced Krein 
spaces by the same operator, was proven by B.~\'Curgus and H.~Langer \cite{CurgusLanger}.

\begin{theorem} Let $A\in\cB(\cH)$ be a selfadjoint operator in a Hilbert space $\cH$. The following assertions are equivalent.
\begin{itemize}
\item[(i)] $A$ has an induced Krein space which is unique, modulo unitary equivalence.
\item[(ii)] There exists $\epsilon>0$ such that either $(-\epsilon,0)$ or $(0,\epsilon)$ is in the resolvent set $\rho(A)$ of $A$.
\end{itemize}
\end{theorem}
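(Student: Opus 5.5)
The plan is to reduce the problem to a comparison of induced Krein spaces via fundamental decompositions. It rests on one observation: any Krein space $(\cK,\Pi)$ induced by $A$ is unitarily equivalent to one sitting inside a fixed model. Concretely, given $(\cK,\Pi)$, fix a fundamental symmetry $J$ with associated Hilbert norm on $\cK$. Since $\Pi$ is bounded with dense range, $\Pi^*\colon\cK\ra\cH$ is injective. Condition (IKS3) gives $A=\Pi^*J\Pi$, where $\Pi^*$ is the Hilbert space adjoint. I would then transport $\cK$ onto the operator range $\ran(\Pi^*)\subseteq\cH$, equipped with the Krein structure pulled back from $\cK$; this is a Krein space continuously included in $\cH$ whose ``kernel operator'' is $A$. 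In this way, uniqueness of induced Krein spaces becomes equivalent to uniqueness of Krein spaces continuously included in $\cH$ with kernel $A$, which is Hara's reformulation. Example~\ref{ex:kasu} exhibits the distinguished representative $\cB_A=\ran(|A|^{1/2})$.

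For (ii)$\Ra$(i), assume, say, $(0,\epsilon)\subseteq\rho(A)$, so that $A_+\geq\epsilon E_+$ on $E_+\cH$. Given an arbitrary induced $(\cK,\Pi)$, I would define $U\colon\Pi_A h\mapsto \Pi h$ on the dense subspace $\ran(\Pi_A)\subseteq\cK_A$. It is isometric for the indefinite inner products by (IKS3), but an isometry need not be bounded in the indefinite setting, so boundedness has to be proved. The gap condition gives it: on $E_+\cH$ the $\cK_A$-norm $\|A_+^{1/2}h\|$ is equivalent to $\|h\|_\cH$. Hence $\cK_+:=$ the closure of $\ran(E_+)$ in $\cK_A$ is just $E_+\cH$ with an equivalent norm, and it is a uniformly positive subspace of $\cK_A$. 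A uniformly positive (hence maximal-definite-friendly) subspace image $\Pi E_+\cH$ in $\cK$ is closed and positive definite, with Gram operator bounded below. Its orthogonal companion in $\cK$ is then nonpositive, and by density and Krein's theorem on regular subspaces it is negative definite. I would use this to build a fundamental decomposition of $\cK$ in which $\Pi E_+\cH$ is the positive part and $\Pi E_-\cH$ is dense in the negative part. On the negative part, $U$ is a Hilbert space isometry, namely for the norm $\|A_-^{1/2}\cdot\|$. On the positive part, $U$ is bounded with bounded inverse. So $U$ extends to a bounded, boundedly invertible Krein isometry, that is, a Krein unitary.

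The main obstacle is precisely this step. I must check that $\Pi E_-\cH$ is indeed contained in the orthogonal companion of $\Pi E_+\cH$ (it is, since $\langle A E_-h,E_+k\rangle=0$) and that this companion is a Hilbert space under $-[\cdot,\cdot]$. The key point is that the positive part is complete with the intrinsic norm. The tool I would use is that a closed uniformly positive subspace of a Krein space is regular, i.e.\ ortho-complemented.

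For (i)$\Ra$(ii), I argue by contrapositive. If both $(-\epsilon,0)$ and $(0,\epsilon)$ meet $\sigma(A)$ for every $\epsilon$, then $0$ is an accumulation point of both $\sigma(A_+)$ and $\sigma(A_-)$. I would use spectral subspaces $E(\Delta_n^\pm)$ for disjoint intervals $\Delta_n^\pm$ accumulating at $0$ to construct pairs of unit vectors $e_n^+\in E_+\cH$ and $e_n^-\in E_-\cH$ whose $\cK_A$-norms tend to zero at comparable rates. Then, on the span, I would define a new inner product space by declaring the vectors $\Pi_A(e_n^++e_n^-)$ and $\Pi_A(e_n^+-e_n^-)$ to be neutral-rescaled. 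This amounts to choosing, in each $2\times 2$ block, a different fundamental symmetry (a hyperbolic rotation with parameter $\theta_n\ra\infty$) while the indefinite form $\langle A\cdot,\cdot\rangle$ stays unchanged. Completing with respect to the new Hilbert norm gives an induced Krein space $(\cK',\Pi')$. The identity map on $\ran(\Pi_A)$ is isometric but unbounded, because the block norms of the rotations are $\cosh\theta_n\ra\infty$. Hence no Krein unitary $U$ with $U\Pi_A=\Pi'$ exists, since such a $U$ would have to coincide with this map on a dense set. This is the \'Curgus--Langer type construction, and I would verify the block computations only at the level of $2\times2$ matrices.
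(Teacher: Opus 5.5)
The paper gives no proof of this statement; it quotes Theorem~6.1.9 of \cite{GheondeaE}. Your argument therefore has to stand on its own.

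Your direction (ii)$\Ra$(i) holds up once two routine points are written out.
\begin{itemize}
\item $\Pi E_+\cH$ is closed, because $\epsilon\|h\|^2\le[\Pi h,\Pi h]\le\|\Pi h\|^2$ for $h\in E_+\cH$, using a unitary norm of $\cK$.
\item $\Pi E_-\cH$ is dense in $(\Pi E_+\cH)^{[\perp]}$. The $[\cdot,\cdot]$-orthogonal projection onto this companion maps the dense set $\Pi\cH$ onto $\Pi E_-\cH$. This uses $\Pi\ker(A)=\{0\}$, which follows from nondegeneracy of $\cK$ and is also what makes your $U$ well defined.
\end{itemize}
The companion is then ortho-complemented and nonpositive, hence a Hilbert space under $-[\cdot,\cdot]$, and $U$ is unitary on each block.

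The gap is in (i)$\Ra$(ii). You never verify (IKS2) for $(\cK',\Pi')$. You need that $\Pi_Ah$ has finite new norm for every $h\in\cH$, not only for $h$ in the span of the $e_n^\pm$, and that $\|\Pi'h\|_{\cK'}\le C\|h\|_\cH$. This is not a $2\times2$ computation.

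It is also exactly where the two-sided accumulation of the spectrum at $0$ is used. One has $\|\Pi'e_n^\pm\|_{\cK'}^2=\langle|A|e_n^\pm,e_n^\pm\rangle\cosh 2\theta_n$. So $\theta_n\to\infty$ is compatible with (IKS2) only because both $\langle|A|e_n^\pm,e_n^\pm\rangle\to0$, and only if $\theta_n$ grows slowly relative to them. If one half of the spectrum stayed away from $0$, no unbounded choice of $\theta_n$ would work, consistent with (ii)$\Ra$(i). ``Comparable rates'' is not the relevant condition.

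Here is a choice that works.
\begin{itemize}
\item Take intervals $\Delta_n^\pm$ not containing $0$, set $b_n:=\sup\{|t|\colon t\in\Delta_n^+\cup\Delta_n^-\}\to0$, and put $e^{2\theta_n}:=b_n^{-1/2}$.
\item Let $\mathcal{M}_n$ be the span of $\Pi_Ae_n^\pm$ and $P_n$ the orthogonal projection of $\cK_A$ onto it. The coordinates of $P_n\Pi_Ah$ in the orthonormal basis $\Pi_Ae_n^\pm/\|\Pi_Ae_n^\pm\|_{\cK_A}$ are $\langle h,|A|e_n^\pm\rangle_\cH\,\langle|A|e_n^\pm,e_n^\pm\rangle_\cH^{-1/2}$.
\item These have modulus at most $b_n^{1/2}|\langle h,f_n^\pm\rangle_\cH|$, where $f_n^\pm:=|A|e_n^\pm/\||A|e_n^\pm\|$ form an orthonormal system.
\item Since $\|R_n^{-1}\|=e^{\theta_n}$, and the component orthogonal to all $\mathcal{M}_n$ contributes at most $\|\Pi_Ah\|_{\cK_A}^2\le\|A\|\|h\|^2$, you get $\|\Pi'h\|_{\cK'}^2\le(\|A\|+\sup_nb_n^{1/2})\|h\|_\cH^2$.
\end{itemize}

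Density of $\ran(\Pi')$ also needs a line. A vector of $\cK'$ orthogonal to $\ran(\Pi')$ is orthogonal to every $\mathcal{M}_n$. On the orthogonal complement of all the $\mathcal{M}_n$ the old and new structures coincide, so density of $\ran(\Pi_A)$ in $\cK_A$ finishes it.

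With these insertions your nonequivalence argument is complete. The block norms should read $e^{\theta_n}$ rather than $\cosh\theta_n$, but that is harmless.
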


In this article we need a result that provides a sufficient condition on bounded operators that can be lifted to bounded operators 
between certain Krein spaces induced by selfadjoint operators. This is a generalisation of a classical result of J.~Dieudonn\'e 
\cite{Dieudonne},
P.~Lax \cite{Lax}, W.T.~Reid~\cite{Reid}, and M.G.~Krein \cite{Krein}, obtained by A.~Dijksma, H.~Langer, and H.S.V.~de Snoo
\cite{DLS}, see also Theorem~6.1.10 in \cite{GheondeaE}.

\begin{theorem}\label{t:abetul} 
Assume that $A\in\cB(\cH)$ and $B\in\cB(\cG)$ are bounded selfadjoint operators and let $T\in\cB(\cH,\cG)$
and $S\in\cB(\cG,\cH)$ be such that $BT=S^*A$, equivalently
\begin{equation}\label{e:taheg}
[Th,g]_B=[h,Sg]_A,\quad h\in\cH,\ g\in\cG.
\end{equation} Then, there exist unique operators $\widetilde T\in\cB(\cK_A,\cK_B)$ 
and $\widetilde S\in\cB(\cK_B,\cK_A)$, such that
\begin{equation*}
\widetilde T\Pi_A=\Pi_BT,\quad \widetilde S\Pi_B=\Pi_A S.
\end{equation*} 
\end{theorem}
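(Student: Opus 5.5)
Using the Hilbert space structures, I would rewrite everything in terms of the Krein space $\cK_A$ and its dense subspace $\Pi_A\cH$; the Krein space $\cK_B$ is handled the same way. Recall from Example~\ref{ex:kas} that $\cK_A$ is the completion of $\cH\ominus\ker(A)$ with respect to the norm $\||A|^{1/2}\cdot\|_\cH$, and that $\Pi_A$ is the orthogonal projection onto $\cH\ominus\ker(A)$. Uniqueness of $\widetilde T$ and $\widetilde S$ is immediate, since $\Pi_A$ and $\Pi_B$ have dense ranges and the operators are required to be bounded. Thus the whole task is existence. I would prove existence for $\widetilde T$ only; the statement is symmetric in $(A,T)$ and $(B,S)$, since taking adjoints in $BT=S^*A$ gives $AS=T^*B$.

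The first step is to show that $\widetilde T$ is well defined on $\Pi_A\cH$. If $\Pi_Ah=0$, then $Ah=0$, hence $BTh=S^*Ah=0$, so $Th\in\ker(B)$ and $\Pi_BTh=0$. We may therefore set $\widetilde T\Pi_Ah:=\Pi_BTh$. The second step, which is the main obstacle, is boundedness in the unitary norms, namely
\[
\||B|^{1/2}Th\|\le C\||A|^{1/2}h\|.
\]
This is exactly the Krein--Reid--Lax--Dieudonn\'e type estimate.

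For this estimate I would follow the classical trick for the positive case and adapt it through $|A|$, $|B|$. First, from $|B|T=S_B BT=S_BS^*A=S_BS^*S_A|A|$ one gets $|B|T=R|A|$ with $R:=S_BS^*S_A$ bounded. Consequently
\[
\||B|^{1/2}Th\|^2=\langle |B|Th,Th\rangle=\langle R|A|h,Th\rangle .
\]
Bounding this directly via Cauchy--Schwarz is not enough, since it gives a bound involving $\||A|h\|$ rather than $\||A|^{1/2}h\|$. So I would use the Krein/Reid interpolation argument. Set $T_1:=|B|^{1/2}T$ and consider it as an operator from $\cH$ with norm $\||A|^{1/2}\cdot\|$. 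The relation $T^*|B|T=T^*R|A|$ shows that the operator $X:=T^*|B|T$ satisfies $X=Y|A|$, where $Y:=T^*R$ is bounded. In addition $X\ge0$, and $X^*=X$ gives $|A|Y^*=Y|A|$.

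Reid's inequality then yields $X\le \|Y\|\,|A|$. I would prove this by iterating Cauchy--Schwarz,
\[
\langle Xh,h\rangle^{2^n}\le \langle X^{2^n}h,h\rangle\,\|h\|^{\dots},
\]
and using $X^{2^n}=Y^{2^n}|A|$ on the appropriate vectors, which follows from the symmetry $|A|Y^*=Y|A|$. Taking $2^n$-th roots and letting $n\to\infty$ gives
\[
\langle Xh,h\rangle\le r(Y)\,\langle |A|h,h\rangle\le \|Y\|\,\langle|A|h,h\rangle .
\]
This is precisely $\||B|^{1/2}Th\|^2\le\|Y\|\,\||A|^{1/2}h\|^2$. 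Alternatively, I could invoke the Dijksma--Langer--de Snoo argument directly.

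Given this estimate, $\widetilde T$ extends by continuity from the dense set $\Pi_A\cH$ to a bounded operator $\cK_A\to\cK_B$ satisfying $\widetilde T\Pi_A=\Pi_BT$. The same argument applied to $S$ produces $\widetilde S$. As a final check, the relation \eqref{e:taheg} passes to $[\widetilde T\cdot,\cdot]_{\cK_B}=[\cdot,\widetilde S\cdot]_{\cK_A}$ by density, so $\widetilde S=\widetilde T^\sharp$.
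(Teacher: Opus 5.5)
The paper does not prove this theorem; it quotes it from Dijksma--Langer--de Snoo and from Theorem~6.1.10 of the cited monograph. So your proposal would be a self-contained substitute, and its skeleton is correct:
\begin{itemize}
\item uniqueness follows from the density of $\Pi_A\cH$ and $\Pi_B\cH$;
\item well-definedness is as you say;
\item $\widetilde S$ comes for free from the symmetric relation $AS=T^*B$;
\item the key reduction $|B|T=S_BS^*S_A|A|$ shows that $T^*|B|T=|A|Z$ with $Z:=Y^*=S_ASS_BT$. That is, $Z$ is symmetric with respect to the \emph{positive} operator $|A|$.
\end{itemize}
This converts the indefinite statement into the classical positive-case estimate of Krein and Reid. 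It even yields $\|\widetilde T\|^2\le r(Z)\le\|S\|\,\|T\|$ in the unitary norms of Example~\ref{ex:kas}. If you simply cite Reid's inequality ($K\geq 0$ and $KZ$ selfadjoint imply $|\langle KZh,h\rangle|\le r(Z)\langle Kh,h\rangle$), nothing is missing.

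However, the proof of Reid's inequality that you sketch would fail, for two reasons.
\begin{itemize}
\item The identity $X^{2^n}=Y^{2^n}|A|$ is false in general. Already $X^2=Y|A|Y|A|$, whereas $Y^2|A|=Y|A|Y^*$. What the symmetry $|A|Y^*=Y|A|$ actually gives is $|A|(Y^*)^m=Y^m|A|$.
\item Iterating Cauchy--Schwarz in the inner product of $\cH$ only produces powers of $\|h\|$ on the right, never of $\langle|A|h,h\rangle$. Even with such an identity you would end with a bound like $\langle Xh,h\rangle\le r(Y)\|h\|^2$, which says nothing about continuity in $\cK_A$.
\end{itemize}
The iteration has to be done in the semi-inner product $[u,v]:=\langle|A|u,v\rangle$, for which $Y^*$ is symmetric, since $[Y^*u,v]=\langle Y|A|u,v\rangle=[u,Y^*v]$. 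Cauchy--Schwarz then gives $[(Y^*)^{2^k}h,h]^2\le[(Y^*)^{2^{k+1}}h,h]\,[h,h]$, hence
\[
\langle Xh,h\rangle^{2^n}=[Y^*h,h]^{2^n}\le[(Y^*)^{2^n}h,h]\,[h,h]^{2^n-1}\le\|A\|\,\|Y\|^{2^n}\|h\|^2\,\langle|A|h,h\rangle^{2^n-1}.
\]
Taking $2^n$-th roots and letting $n\to\infty$ gives $\langle Xh,h\rangle\le\|Y\|\,\langle|A|h,h\rangle$. When $\langle|A|h,h\rangle=0$, Cauchy--Schwarz alone gives $\langle Xh,h\rangle=0$. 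With this correction your argument is a complete proof.
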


Since the Krein spaces $(\cK_A,\Pi_A)$ and $(\cB_A,\Pi_{\cB_A})$ are unitarily equivalent, the previous theorem is equivalent 
with the following one.

\begin{theorem}\label{t:abetu} 
Assume that $A\in\cB(\cH)$ and $B\in\cB(\cG)$ are bounded selfadjoint operators and let $T\in\cB(\cH,\cG)$
and $S\in\cB(\cG,\cH)$ be such that $BT=S^*A$, equivalently, \eqref{e:taheg} holds. 
Then, there exist unique operators $\widetilde T\in\cB(\cB_A,\cB_B)$ 
and $\widetilde S\in\cB(\cB_B,\cB_A)$, such that
\begin{equation*}
\widetilde T\Pi_{\cB_A}=\Pi_{\cB_B}T,\quad \widetilde S\Pi_{\cB_B}=\Pi_{\cB_A} S.
\end{equation*} 
\end{theorem}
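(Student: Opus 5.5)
The plan is to deduce Theorem~\ref{t:abetu} from Theorem~\ref{t:abetul} by transporting the liftings along the unitary equivalences constructed in Example~\ref{ex:kasu}. Let $U_A\colon \cK_A\ra\cB_A$ and $U_B\colon\cK_B\ra\cB_B$ be the Krein space unitary operators from that example, so that $U_A\Pi_A=\Pi_{\cB_A}$ and $U_B\Pi_B=\Pi_{\cB_B}$. Both are also bounded, indeed isometric, with respect to the Hilbert space norms, so they are boundedly invertible. Theorem~\ref{t:abetul} gives unique $\widehat T\in\cB(\cK_A,\cK_B)$ and $\widehat S\in\cB(\cK_B,\cK_A)$ with $\widehat T\Pi_A=\Pi_BT$ and $\widehat S\Pi_B=\Pi_AS$.

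For existence, I will define
\begin{equation*}
\widetilde T:=U_B\widehat T U_A^{-1}\in\cB(\cB_A,\cB_B),\qquad \widetilde S:=U_A\widehat S U_B^{-1}\in\cB(\cB_B,\cB_A).
\end{equation*}
Both are bounded as compositions of bounded operators, since the strong topologies are the ones given by the unitary norms. A direct computation then gives $\widetilde T\Pi_{\cB_A}=U_B\widehat T U_A^{-1}U_A\Pi_A=U_B\widehat T\Pi_A=U_B\Pi_BT=\Pi_{\cB_B}T$, and the identity for $\widetilde S$ follows in the same way.

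For uniqueness, I will use that $\Pi_{\cB_A}$ has dense range in $\cB_A$, as shown in Example~\ref{ex:kasu}. Two bounded operators $\widetilde T_1,\widetilde T_2$ with $\widetilde T_i\Pi_{\cB_A}=\Pi_{\cB_B}T$ agree on $\ran(\Pi_{\cB_A})$, hence they agree everywhere by continuity. The same argument, using that $\Pi_{\cB_B}$ has dense range, gives uniqueness of $\widetilde S$.

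The only point needing care is the one on which Example~\ref{ex:kasu} already relies. We need $U_A$ to be a genuine bounded bijection between the completed spaces, that is, its extension from the dense subspace $\cH\ominus\ker(A)$ must be onto $\cB_A$. This holds because it is isometric for the Hilbert norms and has dense range, so it is surjective. With that established, the rest is formal transport of structure.
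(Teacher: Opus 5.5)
Your proposal is correct and follows the paper's own route. The paper derives Theorem~\ref{t:abetu} from Theorem~\ref{t:abetul} in a single sentence, citing the unitary equivalence of $(\cK_A,\Pi_A)$ and $(\cB_A,\Pi_{\cB_A})$ from Example~\ref{ex:kasu}; you have written out that transport explicitly as $\widetilde T=U_B\widehat T U_A^{-1}$ and $\widetilde S=U_A\widehat S U_B^{-1}$, and added the dense-range argument for uniqueness.
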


\subsection{Krein Space Linearisations of Hermitian Kernels}\label{ss:lhk}
In this subsection we use the notation as in Subsection~\ref{ss:nbd}. Recall that by $\fK^h(X;\bH)$ we denote the real vector 
space of all Hermitian $\bH$-operator valued kernels and, by $\fK^+(X;\bH)$, 
the strict convex cone of all positive semidefinite $\bH$-operator valued kernels. If $K\in\fK^h(X;\bH)$ then we use the 
notation $[\cdot,\cdot]_K$ for the Hermitian sesquilinear form, actually, an indefinite inner product, with terminology of the 
previous sections, induced by $K$ on the vector space $\cF_\FF^0(X;\bH)$, of all $\bH$-valued sections of finite support
\begin{equation}\label{e:fegek}
[f,g]_K:=\langle C_K f,g\rangle_0,\quad f,g\in\cF_\FF^0(X;\bH).
\end{equation}
When $L\in\fK^+(X;\bH)$, then we use the notation $\langle\cdot,\cdot\rangle_L$ as in \eqref{e:ipk}, in order to emphasise
the fact that this is a positive semidefinite inner product.

\begin{definition}\label{d:ksl}
 Given $K\in\fK^h(X;\bH)$, a \emph{Krein space
linearisation}, or a 
\emph{Krein space Kolmogorov decomposition}, of $K$ is, by definition, a pair $(\cK;V)$ subject to
the following conditions.
\begin{itemize}
\item[(kkd1)] $\cK$ is a Krein space over $\FF$.
\item[(kkd2)] $V=\{V(x)\}_{x\in X}$ is an 
operator bundle such that $V(x)\in \cB(\cH_x,\cK)$ for all $x\in X$.
\item[(kkd3)] $K(x,y)=V(x)^\sharp V(y)$ for all $x,y\in X$, equivalently, 
\begin{equation*}
\langle K(x,y)h,k\rangle_{\cH_x}=[V(y)h,V(x)k]_\cK,\quad x,y\in X,\ h\in\cH_y,\ k\in\cH_x.
\end{equation*}
\end{itemize}
A linearisation $(\cK;V)$ is called \emph{minimal} if
\begin{itemize}
\item[(kkd4)] $\cK$ is the closed span of $\{V(x)\cH_x\mid x\in X\}$.
\end{itemize}
\end{definition}

Given two kernels $K,L\in\fK^h(X;\bH)$, we say that $L$ \emph{dominates} $K$, and write $K\leq L$, if $L-K\in \fK^+(X;\bH)$.  Equivalently, this means that
\begin{equation}\label{e:fefek}
[f,f]_K\leq [f,f]_L,\quad f\in \cF_\FF^0(X;\bH).
\end{equation}
In particular, if $K\in\fK^h(X;\bH)$ belongs to $\fK^+(X;\bH)$, we equivalently write $K\geq 0$.
We call $K$ and $L$ \emph{disjoint} if the only $P\in\fK^+(X;\bH)$ such that $P\leq L$ and $P\leq K$ is the null kernel.
The next result is Theorem~3.1 in \cite{ConstantinescuGheondea1}, with an addition from Theorem~1.5.3 in 
\cite{GheondeaE}. The equivalence of (1), (1)$^\prime$, (2), (3), and (3)$^\prime$ originates in \cite{Schwartz} under a different, 
but closely related, setting. For later use, we outline the main steps in the 
implications (2)$\Ra$(4) and (4)$\Ra$(3)$^\prime$.

\begin{theorem}\label{t:ksl}
Given $K\in\fK^h(X;\bH)$, the following assertions are equivalent.
\begin{itemize}
\item[(1)] There exists $L\in\fK^+(X;\bH)$, such that $-L\leq K\leq L$.
\item[(1)$^\prime$] There exists $L\in\fK^+(X;\bH)$, such that $K\leq L$.
\item[(2)] There exists $L\in\fK^+(X;\bH)$, such that
\begin{equation}\label{e:fegalu}
|[f,g]_K|\leq [f,f]_L^{1/2} [g,g]_L^{1/2},\quad f,g\in \cF_\FF^0(X;\bH).
\end{equation}
\item[(3)] $K=K_1-K_2$ for some kernels $K_1,K_2\in \fK^+(X;\bH)$.
\item[(3)$^\prime$] $K=K_1-K_2$ for some disjoint kernels $K_1,K_2\in \fK^+(X;\bH)$.
\item[(4)] There exists a Krein space linearisation $(\cK;V)$ of $K$.
\end{itemize}
In addition, if any of these conditions holds, hence all of them hold, then a minimal Krein space linearisation $(\cK;V)$ of $K$ 
exists. 
\end{theorem}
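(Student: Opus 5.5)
We prove the chain of implications (1)$\Ra$(1)$^\prime$ (trivial), (1)$^\prime\Ra$(1), (1)$\Ra$(2), (2)$\Ra$(4), (4)$\Ra$(3)$^\prime$, (3)$^\prime\Ra$(3) (trivial) and (3)$\Ra$(1). For (1)$^\prime\Ra$(1): if $K\leq L$ with $L\geq 0$, then $-(L+2L)\leq$ ? is not directly available. Instead we route (1)$^\prime\Ra$(3): $K=L-(L-K)$ with both $L$ and $L-K$ in $\fK^+(X;\bH)$. For (3)$\Ra$(1): if $K=K_1-K_2$, put $L=K_1+K_2$; then $L-K=2K_2\geq0$ and $L+K=2K_1\geq 0$. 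For (1)$\Ra$(2): the forms $\langle\cdot,\cdot\rangle_{L\pm K}$ are nonnegative. Polarisation, applied to the nonnegative forms $L+K$ and $L-K$ together with the Schwarz inequality, gives $|[f,g]_K|\leq[f,f]_L^{1/2}[g,g]_L^{1/2}$. This is the standard argument that a Hermitian form $q$ with $|q(f,f)|\leq p(f,f)$ satisfies $|q(f,g)|\leq p(f,f)^{1/2}p(g,g)^{1/2}$; it goes by rotating $g$ by a phase and rescaling $f\mapsto tf$, $g\mapsto g/t$.

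The central step is (2)$\Ra$(4). Let $(\cK_L;\Pi_L)$ be the Hilbert space induced by $(\cF_\FF^0(X;\bH);\langle\cdot,\cdot\rangle_L)$, as in the proof of Theorem~\ref{t:kolmogorov}. By \eqref{e:fegalu}, $[\cdot,\cdot]_K$ vanishes on $\cN_L$, so it factors through $\cF_\FF^0/\cN_L$ as a form bounded by $1$. It therefore extends to a bounded Hermitian form on $\cK_L$, which is represented by a selfadjoint contraction $A_L\in\cB(\cK_L)$ with $[f,g]_K=\langle A_L\Pi_Lf,\Pi_Lg\rangle_L$. Now apply Example~\ref{ex:kas} to obtain the induced Krein space $(\cK_{A_L};\Pi_{A_L})$, and define $V(x)h:=\Pi_{A_L}\Pi_L(\delta_xh)$. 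Since $\Pi_{A_L}$ is bounded and $\|\Pi_L\delta_xh\|^2=\langle L(x,x)h,h\rangle$, each $V(x)$ is bounded. Property (IKS3) gives $[V(y)h,V(x)k]=\langle A_L\Pi_L\delta_yh,\Pi_L\delta_xk\rangle_L=[\delta_yh,\delta_xk]_K=\langle K(x,y)h,k\rangle$, which is (kkd3). Minimality (kkd4) follows because $\Pi_{A_L}\Pi_L$ has dense range: $\Pi_L$ has dense range and $\Pi_{A_L}$ is bounded with dense range. This also settles the final assertion about the existence of a minimal linearisation.

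For (4)$\Ra$(3)$^\prime$, take a linearisation $(\cK;V)$ and, by the Remark after Theorem~\ref{t:kolmogorov} adapted to Krein spaces, assume it is minimal. This is the step requiring the most care, since a Krein-space projection onto a closed span need not exist. I would therefore avoid the reduction and argue directly. Choose a fundamental decomposition $\cK=\cK^+\oplus\cK^-$ with projections $P_\pm$ and set $K_\pm(x,y):=\pm V(x)^\sharp P_\pm V(y)$, that is, $\langle K_\pm(x,y)h,k\rangle=\langle P_\pm V(y)h,P_\pm V(x)k\rangle_J$. Both kernels are positive semidefinite, having Hilbert linearisations $(\cK^\pm;P_\pm V)$, and $K=K_+-K_-$, which yields (3). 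Disjointness is the real obstacle. Suppose $P\in\fK^+(X;\bH)$ satisfies $P\leq K_\pm$. Using $(\cK^\pm;P_\pm V)$ together with Theorem~\ref{t:kolmogorov}, a standard majorisation lemma shows $P(x,y)=V(x)^\sharp P_+ C_+ P_+V(y)$ for a positive contraction $C_+$ on $\cK^+$, and similarly $P(x,y)=V(x)^\sharp P_- C_- P_-V(y)$ on $\cK^-$. These two expressions live on orthogonal parts of $\cK$, and I would try to combine them with minimality to force the two contractions to vanish on the relevant closed spans, giving $P=0$. If this fails for an arbitrary fundamental decomposition, the fallback is to cite the addition in Theorem~1.5.3 of \cite{GheondeaE}, which chooses the decomposition adapted to the linearisation.
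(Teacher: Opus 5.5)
Your treatment of (1)$\Leftrightarrow$(1)$^\prime\Leftrightarrow$(3) and of (1)$\Ra$(2) is correct. Your (2)$\Ra$(4) is exactly the paper's construction: the Gram operator of $[\cdot,\cdot]_K$ with respect to $\langle\cdot,\cdot\rangle_L$, then the induced Krein space of Example~\ref{ex:kas}, with minimality coming from density of ranges.

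The genuine gap is the disjointness in (4)$\Ra$(3)$^\prime$. You do not prove it, and neither route you propose can work: arguing directly for an arbitrary, possibly non-minimal, linearisation fails, and so does choosing a better fundamental decomposition. Take $X=\{x\}$, $\cH_x=\CC$, $\cK=\CC^2$ with $J=\operatorname{diag}(1,-1)$ and $V(x)1=(1,1)^{T}$. Then $K=0$, so every fundamental decomposition gives $K_+=K_-$. Moreover, $K_+=0$ would force the nonzero neutral vector $V(x)1$ into the negative definite subspace $\cK^-$. Hence $K_+=K_-\neq 0$ for every choice of $J$, and the two kernels are never disjoint. The obstruction is the lack of minimality, not the choice of fundamental symmetry, so your fallback misidentifies what is needed.

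The missing idea is to obtain minimality without compressing the given linearisation, which, as you rightly note, is unavailable in Krein spaces. Your own chain supplies it. The $P_\pm$ construction already gives (4)$\Ra$(3) with no minimality. Then (3)$\Ra$(1)$\Ra$(2)$\Ra$(4) produces a minimal linearisation $(\cK;V)$, and you run the disjointness argument on that one.

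For such a $(\cK;V)$ your majorisation idea does close. Suppose $P\in\fK^+(X;\bH)$ and $P\leq K_\pm$. Then $P(x,y)=V(x)^*P_+C_+P_+V(y)=V(x)^*P_-C_-P_-V(y)$, with Hilbert space adjoints in $(\cK;\langle\cdot,\cdot\rangle_J)$ and positive contractions $C_\pm$ on $\cK^\pm$. (Written with $V(x)^\sharp=V(x)^*J$, the second expression carries a minus sign, contrary to what you wrote.)

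Consequently the bounded operator $A:=P_+C_+P_+-P_-C_-P_-$ satisfies $\langle AV(y)h,V(x)k\rangle_J=0$ for all $x,y\in X$, $h\in\cH_y$, $k\in\cH_x$. By (kkd4), $A=0$. Applying $A$ to $g\in\cK^+$ gives $C_+g=0$, so $C_+=0$ and $P=0$.

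The paper itself simply delegates this disjointness to Proposition~16 of Schwartz. With minimality secured as above, your argument becomes a complete, self-contained proof.
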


\begin{proof} (2)$\Ra$(4). So, let $L\in\fK^+(X;\bH)$ as in item (2) and, as 
in Subsection~\ref{ss:hsl}, consider $(\cK_L,V_L)$ a minimal linearisation of $L$. By \eqref{e:fegalu} we have
\begin{align}\label{e:nela}
\cN_L:=\ker(C_L) & =\{f\in\cF_\FF^0(X;\bH)\mid \langle f,f\rangle_L=0\}\subseteq 
\cN_K:=\ker(C_K)\\
& =\{f\in \cF_\FF^0(X;\bH)\mid [f,g]_K=0\mbox{ for all }g\in \cF_\FF^0(X;\bH)\}\nonumber
\end{align}
and hence, the inner product $[\cdot,\cdot]_K$ can be 
uniquely factored to $\cF_\FF^0(X;\bH)/\cN_L$ and then extended to an inner product, denoted again by $[\cdot,\cdot]_K$, 
to the Hilbert space $\cK_L$ such that \eqref{e:fegalu} holds. 
By Riesz' Lemma, there exists a unique selfadjoint contraction $G_K\in\cB(\cK_L)$, called the \emph{Gram operator}
of the inner product $[\cdot,\cdot]_K$ with respect to the inner product $\langle\cdot,\cdot\rangle_L$, such that
\begin{equation}\label{e:fegak}
[f,g]_K=\langle G_Kf,g\rangle_L,\quad f,g\in \cK_L.
\end{equation}
We now use the construction as in Example~\ref{ex:kas} in order to produce the Krein space $(\cK_{G_K};\Pi_{G_K})$, and
define $\cK:=\cK_{G_K}$.

The operators $V(x)\colon \cH_x\ra\cK$ are defined similarly as in Subsection~\ref{ss:hsl}, that is, 
for each $x\in X$, let $V(x)\colon \cH_x\ra\cK_{G_K}$ be the operator defined by
\begin{equation}\label{e:vexek}
V(x)h=\Pi_{K}(\widehat h)= \widehat h+\cN_K\in\cK_{G_K},\end{equation}
with notation as in \eqref{e:deltax}. One has to prove that these operators are bounded. To see this, fix $J$ a fundamental 
symmetry on $\cK$, and let $x\in X$ and $h\in\cH_x$ be arbitrary. 
Then, with respect to the unitary norm of $\cK$ associated to the fundamental symmetry $J$ defined before, we have
\begin{align*}
\|V(x)h\|_\cK^2 & = \|\widehat h+\cN_K\|_\cK^2 = \||G_{K}|^{1/2}(\widehat h+\cN_L)\|_{\cK_L}^2\leq \|(\widehat h+\cN_L)\|_{\cK_L}^2 \\
& =  \langle (\widehat h+\cN_L),(\widehat h+\cN_L)\rangle_{\cK_L} = \langle L(x,x)h,h\rangle_{\cH_x}\\
& \leq \|L(x,x)\| \|h\|_{\cH_x}^2,
\end{align*}
which proves that $V(x)$ is a bounded linear operator.
Finally, for arbitrary $x,y\in X$, $h\in\cH_x$, and $k\in\cH_y$, we have
\begin{align*}
\langle V_x^* J V_yk,h\rangle_{\cH_x} & = \langle JV_yk,V_xh\rangle_{J} = [V(y)k,V(x)h]_{\cK} \\
& = [k+\cN_K,h+\cN_K]_\cK = [K(x,y)k,h]_{\cH_x}.
\end{align*}
In this way, we have a minimal Krein space linearisation $(\cK;V)$ of $K$.

(4)$\Ra$(3)$^\prime$. Let $(\cK;V)$ be a Krein space linearisation of $K$. 
Let $[\cdot,\cdot]_\cK$ denote the indefinite inner product of $\cK$, let 
$J$ be a fundamental symmetry on $\cK$,  and consider the positive definite inner product 
$\langle \cdot,\cdot\rangle_J$, that is,
\begin{equation*}
\langle h,k\rangle_J=[J h,k]_\cK,\quad h,k\in \cK.
\end{equation*}
With respect to the Hilbert space $(\cK;\langle\cdot,\cdot\rangle_J)$, consider the Jordan decomposition $J=J_+-J_-$, 
that is, $J_\pm$ is the spectral projection of $\cK$ onto $\ker(I\mp J)$, hence $I=J_++J_-$ and $J_+J_-=0$. 
Define the $\bH$-operator valued kernels
\begin{equation}
K_\pm(x,y):=V(x)^*J_\pm V(y),\quad x,y\in X,
\end{equation}
and note that both of them are positive semidefinite, $K=K_+-K_-$ and let $L:=K_++K_-\in \fK^+(X;\bH)$. We only need to show 
that $K_+$ and $K_-$ are disjoint, which follows from Proposition~16 in \cite{Schwartz}.
\end{proof}

The problem of uniqueness of minimal Krein space linearisations of Hermitian $\bH$-operator valued kernels is more 
complicated than that of positive semidefinite $\bH$-operator valued kernels, as in Theorem~\ref{t:kolmogorov}.
Because of the lack of uniqueness, we will denote
by $(\cK_K;V_K)$ the minimal Krein space linearisation of $K$ constructed during the proof of the implication (2)$\Ra$(4) of
Theorem~\ref{t:ksl}.
First, the definition. Given an $\bH$-operator valued kernel $K$ on $X$, two Krein space linearisations $(\cK_1;V_1)$ and 
$(\cK_2;V_2)$ of $K$ are called \emph{unitarily equivalent}, if there exists a Krein space unitary operator 
$U\colon \cK_1\ra \cK_2$, such that $UV_1(x)=V_2(x)$, for all $x\in X$. 

Also, given $K\in\fK^h(X;\bH)$ and $L\in\fK^+(X;\bH)$ be such that $-L\leq K\leq L$,
letting $\cK_L$ denote the $\bH$-operator valued minimal linearisation of $L$, which is the quotient-completion of
the positive semidefinite inner product space $(\cF_\FF^0(X;\bH);\langle \cdot,\cdot\rangle_L)$ to a Hilbert space $\cK_L$, let
$G_K\in\cB(\cK_L)$ denote the Gram operator of the inner product $[\cdot,\cdot]_K$, that is, the unique selfadjoint and 
contractive operator $G_K\colon \cK_L\ra\cK_L$ such that
\begin{equation}\label{e:gram}
[h,g]_K=\langle G_K h,g\rangle_{\cK_L},\quad h,g\in \cK_L.
\end{equation}
As usually, given a bounded linear operator $T\in\cB(\cH)$ for some Hilbert space $\cH$, we denote by 
$\rho(T):=\{\lambda\in\CC\mid \lambda I-T\mbox{ has a bounded inverse}\}$ the
\emph{resolvent set} of $T$.
The next result is Theorem~4.1 in \cite{ConstantinescuGheondea1}.

\begin{theorem}\label{t:unic} Given $K\in\fK^h(X;\bH)$, the following assertions are equivalent.
\begin{itemize}
\item[(i)] $K$ has a unique $\bH$-valued Krein space minimal linearisation, modulo unitary equivalence.
\item[(ii)] For any $L\in\fK^+(X;\bH)$ such that $-L\leq K\leq L$, there exists $\epsilon>0$ such that
either $(-\epsilon,0)\subset \rho(G_K)$ or $(0,\epsilon)\subset \rho(G_K)$.
\end{itemize}
\end{theorem}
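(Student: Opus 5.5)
The plan is to reduce the statement to the uniqueness theorem for Krein spaces induced by a single selfadjoint operator (Theorem~6.1.9 in \cite{GheondeaE}, quoted above), applied to the Gram operator $G_K\in\cB(\cK_L)$. The first step is to fix $L\in\fK^+(X;\bH)$ with $-L\leq K\leq L$ and to show that minimal Krein space linearisations of $K$ which are \emph{dominated by} $L$ correspond bijectively, up to unitary equivalence, to Krein spaces induced by $G_K$. Given an induced Krein space $(\cK,\Pi)$ of $G_K$, define $V(x):=\Pi V_L(x)$. Then
\begin{equation*}
[V(y)h,V(x)k]_\cK=\langle G_KV_L(y)h,V_L(x)k\rangle_{\cK_L}=[\delta_yh,\delta_xk]_K=\langle K(x,y)h,k\rangle_{\cH_x},
\end{equation*}
and minimality follows from the density of $\ran(\Pi)$ together with the minimality of $(\cK_L;V_L)$. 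Conversely, given a minimal linearisation $(\cK;V)$ and a fundamental symmetry $J$, the proof of (4)$\Rightarrow$(3)$^\prime$ in Theorem~\ref{t:ksl} produces $L_J:=K_++K_-$ with $-L_J\leq K\leq L_J$. The map $V_{L_J}(x)h\mapsto V(x)h$ extends to a bounded operator $\Pi\colon\cK_{L_J}\ra\cK$, since it is isometric from $\langle\cdot,\cdot\rangle_{L_J}$ to $\langle\cdot,\cdot\rangle_J$. This $\Pi$ has dense range, so $(\cK,\Pi)$ is a Krein space induced by $G_K$ computed relative to $L_J$. Unitary equivalence of linearisations matches unitary equivalence of induced spaces, because both are determined by their values on $V(x)\cH_x$.

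For (ii)$\Rightarrow$(i), take two minimal linearisations $(\cK_1;V_1)$ and $(\cK_2;V_2)$ with dominating kernels $L_1,L_2$ built as above, and put $L:=L_1+L_2$. Then $-L\leq K\leq L$ still holds. Each $(\cK_i;V_i)$ is also induced from $\cK_L$, via $\Pi_i\circ P_i$, where $P_i\colon\cK_L\ra\cK_{L_i}$ is the contraction coming from $L_i\leq L$. By hypothesis (ii) for this $L$, the spectral gap condition holds for $G_K$ on $\cK_L$. Theorem~6.1.9 then gives uniqueness of the induced Krein space, hence a Krein unitary $U$ with $U\Pi_1P_1=\Pi_2P_2$, and therefore $UV_1=V_2$.

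For (i)$\Rightarrow$(ii), argue by contraposition. If some $L$ has no gap on either side of $0$ in $\sigma(G_K)$, Theorem~6.1.9 yields two non-unitarily-equivalent Krein spaces $(\cK_1,\Pi_1)$ and $(\cK_2,\Pi_2)$ induced by $G_K$. The correspondence of the first step then gives two minimal linearisations $V_i=\Pi_iV_L$. If they were unitarily equivalent through some $U$, then $U\Pi_1=\Pi_2$ on the dense set $\lin\{V_L(x)\cH_x\}$, and by continuity on all of $\cK_L$, which is a contradiction.

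The main obstacle is the converse direction of the first step. One must check that an arbitrary minimal linearisation genuinely arises as an induced Krein space over some $\cK_L$, and in particular that $\Pi$ is well defined on $\cF_\FF^0/\cN_L$. One must also check that passing to the common majorant $L_1+L_2$ keeps both linearisations induced by the same Gram operator. This second point needs care, since $G_K$ depends on $L$ and the induced maps $\Pi_iP_i$ must satisfy (IKS3) for the new $G_K$. That verification reduces to the identity $\langle G_K^{(L)}f,g\rangle_L=[f,g]_K$ on finitely supported sections, extended by continuity.
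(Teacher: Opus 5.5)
The paper gives no proof of this result; it quotes it as Theorem~4.1 of \cite{ConstantinescuGheondea1}. Your argument is correct and is essentially the standard one from that source: each minimal linearisation $(\cK;V)$ is a Krein space induced by the Gram operator relative to $L_J(x,y)=V(x)^*V(y)$, with $\Pi$ then a Hilbert space unitary. Through the contraction $\cK_L\ra\cK_{L_J}$ it is also induced relative to any larger majorant, such as $L_1+L_2$, and the uniqueness criterion for Krein spaces induced by a selfadjoint operator then gives both directions.

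Relative to $L_J$ the Gram operator is a symmetry, so it always has a spectral gap at $0$. This is exactly why (ii) must quantify over all majorants $L$, and why your passage to the common majorant $L_1+L_2$ is the essential step.
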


Other criteria of uniqueness of minimal linearisations of $\bH$-operator valued Hermitian kernels are available, see, 
for example, Proposition~4.3 in \cite{ConstantinescuGheondea1}. A stronger result, saying that, in the case of nonuniqueness, 
infinitely many nonunitary minimal linearisations exist, can be derived from the results of 
B.~\'Curgus and H.~Langer in \cite{CurgusLanger}.

\subsection{Reproducing Kernel Krein Spaces}\label{ss:rkk}
Consider $X$ a nonempty set and let $X\ast \bH$ be a bundle of Hilbert spaces over the field $\FF$. 
Let $K\in\fK^h(X;\bH)$ and, as usually, let $K_x:=K(\cdot,x)$, for all $x\in X$. 
An \emph{$\bH$-valued reproducing kernel Krein space} with kernel $K$ is, by definition, a Krein space 
$(\cR;[\cdot,\cdot])$ subject to the following conditions.
\begin{itemize}
\item[(rkk1)] $\cR\subseteq \cF_\FF(X;\bH)$, with all algebraic operations.
\item[(rkk2)] For any $x\in X$ and $h\in \cH_x$ we have $K_xh\in \cR$.
\item[(rkk3)] For any $f\in\cR$, $x\in X$, and $h\in\cH_x$, we have $\langle f(x),h\rangle_{\cH_x}=[ f,K_xh]$.
\end{itemize}

Property (rkk3) is called the \emph{reproducing property}. As a consequence, the following minimality condition holds as well.
\begin{itemize}
\item[(rkk4)] $\cR$ equals the closed linear span of $\{K_xh\mid x\in X,\ h\in\cH_x\}$.
\end{itemize}

Concerning existence of $\bH$-valued reproducing kernel Krein spaces associated to a given kernel $K\in\fK(X;\bH)$, 
the next result is Theorem~5.1, Corollary~5.2, and Corollary~5.3 in \cite{ConstantinescuGheondea1}.

\begin{theorem}\label{t:rkks}
Let $K\in\fK^h(X;\bH)$. Then, the mapping
\begin{equation*}
(\cK;V)\mapsto \cR(\cK;V):=\{(V(x)^\sharp f)_{x\in X} \mid f\in \cK\} 
\end{equation*}
with inner product on $\cR(\cK;V)$ given by
\begin{equation*}[(V(x)^\sharp f)_{x\in X},(V(x)^\sharp g)_{x\in X}]:= [f,g]_\cK,\quad f,g\in \cK,
\end{equation*}
maps the class of all $\bH$-valued minimal linearisations $(\cK;V)$ of $K$ onto the class of all $\bH$-valued reproducing kernel 
Krein spaces $(\cR,[\cdot,\cdot])$ with reproducing kernel $K$.

In addition, in this correspondence, two $\bH$-valued minimal linearisations $(\cK;V)$ and $(\cH;U)$ of $K$ are unitarily 
equivalent, if and only if the $\bH$-valued reproducing kernel Krein spaces $\cR(\cK;V)$ and $\cR(\cH;U)$ coincide.
\end{theorem}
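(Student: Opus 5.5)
We have to prove Theorem~\ref{t:rkks}. The mapping $(\cK;V)\mapsto\cR(\cK;V)$ should be a well-defined surjection from minimal Krein space linearisations of $K$ onto reproducing kernel Krein spaces with kernel $K$. Moreover, two minimal linearisations should be unitarily equivalent exactly when they give the same space. The plan imitates Remark~\ref{r:kolrk}, replacing Hilbert adjoints by Krein adjoints.

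\textbf{Well-definedness.} Given a minimal Krein space linearisation $(\cK;V)$, consider the linear map $\Lambda\colon\cK\ra\cF_\FF(X;\bH)$, $\Lambda f:=(V(x)^\sharp f)_{x\in X}$. We first show that $\Lambda$ is injective. If $V(x)^\sharp f=0$ for all $x$, then $[f,V(x)h]_\cK=0$ for all $x$ and all $h\in\cH_x$. By minimality (kkd4), $f\perp\cK$, and since Krein spaces are nondegenerate, $f=0$. Hence transporting the inner product, $[\Lambda f,\Lambda g]:=[f,g]_\cK$, is well defined and makes $\Lambda$ a Krein space unitary onto $\cR:=\cR(\cK;V)$; in particular $\cR$ is a Krein space, so (rkk1) holds.

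Next we check (rkk2). For $y\in X$ and $h\in\cH_y$ we have $\Lambda V(y)h=(V(x)^\sharp V(y)h)_x=(K(x,y)h)_x=K_yh$ by (kkd3), so $K_yh\in\cR$. For (rkk3), take $f\in\cK$, $x\in X$, $h\in\cH_x$. Then
\begin{equation*}
\langle(\Lambda f)(x),h\rangle_{\cH_x}=\langle V(x)^\sharp f,h\rangle_{\cH_x}=[f,V(x)h]_\cK=[\Lambda f,K_xh].
\end{equation*}
Here we use that $\cH_x$ is a Hilbert space, so the Krein adjoint into it coincides with the pairing $\langle V(x)^\sharp f,h\rangle=[f,V(x)h]$.

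\textbf{Surjectivity.} Let $(\cR;[\cdot,\cdot])$ be a reproducing kernel Krein space with kernel $K$. Set $\cK:=\cR$ and $V(x)h:=K_xh$. The main obstacle is boundedness of $V(x)\colon\cH_x\ra\cR$, since the Hilbert space estimate $\|K_xh\|^2=\langle K(x,x)h,h\rangle$ from Remark~\ref{r:kolrk} is no longer available.

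To handle it, fix a fundamental symmetry $J$ on $\cR$. The evaluation map $E_x\colon f\mapsto f(x)$ satisfies $\langle E_xf,h\rangle=[f,K_xh]=\langle f,JK_xh\rangle_J$. It is therefore closed, because each functional $f\mapsto\langle E_xf,h\rangle$ is continuous. It is also everywhere defined, so by the closed graph theorem $E_x$ is bounded. Consequently $V(x)=JE_x^*$ is bounded.

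Then $V(x)^\sharp f=f(x)$ by (rkk3), so $[V(y)h,V(x)k]=\langle K(x,y)h,k\rangle$, which is (kkd3). Minimality (kkd4) follows from (rkk4), which is itself a consequence of the reproducing property and nondegeneracy. Finally, $\Lambda f=(f(x))_x=f$, so this linearisation is mapped back to $\cR$.

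\textbf{Equivalence criterion.} Suppose $U\colon\cK_1\ra\cK_2$ is a Krein unitary with $UV_1(x)=V_2(x)$ for all $x$. Then $V_2(x)^\sharp Uf=V_1(x)^\sharp U^\sharp Uf=V_1(x)^\sharp f$, so the two spaces $\cR$ coincide as sets, and the inner products agree because $U$ is isometric.

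Conversely, suppose $\cR(\cK_1;V_1)=\cR(\cK_2;V_2)$ as Krein spaces. Define $U:=\Lambda_2^{-1}\Lambda_1$. It is a Krein unitary, being a composition of unitaries. It also satisfies $UV_1(x)h=\Lambda_2^{-1}K_xh=V_2(x)h$, so the two linearisations are unitarily equivalent.
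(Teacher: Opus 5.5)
Your proof is correct and is essentially the standard argument the paper relies on; the paper does not reprove this statement but cites \cite{ConstantinescuGheondea1}. Your argument is the Krein-space analogue of the correspondence in Remark~\ref{r:kolrk}: minimality gives injectivity of $f\mapsto (V(x)^\sharp f)_{x\in X}$, and the closed graph theorem correctly replaces the Hilbert-space norm estimate to get boundedness of $V(x)h=K_xh$. The one tacit point is boundedness of $\Lambda_2^{-1}\Lambda_1$, and it holds because the strong topology of a Krein space is determined by its inner product (equivalently, a surjective isometry between Krein spaces is closed, hence bounded).
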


As a consequence of this result and Theorem~\ref{t:unic}, one can get the characterisation of uniqueness of $\bH$-valued 
reproducing kernel Krein spaces induced by a given kernel $K\in\fK^h(X;\bH)$. 

\begin{theorem}\label{t:unick} Let $K\in\fK^h(X;\bH)$ be such that it has $\bH$-valued Krein space linearisations, equivalently, 
it has $\bH$-valued reproducing kernel Krein spaces. The following assertions are equivalent.
\begin{itemize}
\item[(i)] $K$ has unique $\bH$-valued reproducing kernel Krein space, modulo unitary equivalence.
\item[(ii)] For any $L\in\fK^+(X;\bH)$ such that $-L\leq K\leq L$, there exists $\epsilon>0$ such that, with respect to notation as 
in \eqref{e:gram}, either $(-\epsilon,0)\subset \rho(G_K)$ or $(0,\epsilon)\subset \rho(G_K)$.
\end{itemize}
\end{theorem}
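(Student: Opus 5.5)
The plan is to combine Theorem~\ref{t:rkks} with Theorem~\ref{t:unic}, so that uniqueness of reproducing kernel Krein spaces becomes uniqueness of minimal linearisations. The key observation is that, by the last part of Theorem~\ref{t:rkks}, the map $(\cK;V)\mapsto\cR(\cK;V)$ is surjective onto the class of $\bH$-valued reproducing kernel Krein spaces with kernel $K$. It also identifies two such spaces precisely when the corresponding minimal linearisations are unitarily equivalent. Hence the map induces a bijection between unitary equivalence classes of minimal Krein space linearisations of $K$ and reproducing kernel Krein spaces with kernel $K$.

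First I would make precise what ``unique modulo unitary equivalence'' means for reproducing kernel Krein spaces. Since these are concrete subspaces of $\cF_\FF(X;\bH)$ with a prescribed reproducing kernel, the natural reading is literal coincidence as sets with the same indefinite inner product. Any weaker notion of unitary equivalence intertwining the reproducing property forces $U(K_xh)=K_xh$; by (rkk4) and continuity this gives equality. I would check this briefly: a Krein unitary $U\colon\cR_1\ra\cR_2$ preserving all kernel sections is the identity on a dense set, hence $\cR_1=\cR_2$ isometrically.

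Then (i) of the present theorem holds iff there is exactly one reproducing kernel Krein space. By the bijection, this holds iff there is exactly one unitary equivalence class of minimal linearisations, i.e. (i) of Theorem~\ref{t:unic}. Theorem~\ref{t:unic} then gives the equivalence with the spectral gap condition (ii), which is verbatim the same condition.

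The main obstacle is the bookkeeping in the first step. One must show that surjectivity of $(\cK;V)\mapsto\cR(\cK;V)$ together with the ``if and only if'' clause really yields a well-defined bijection on equivalence classes. One must also show that the notion of equivalence of reproducing kernel Krein spaces matches coincidence. Existence is guaranteed by hypothesis through Theorem~\ref{t:ksl}, so both classes are nonempty and no degenerate case arises.
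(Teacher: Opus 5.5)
Your proposal is correct and follows the paper's route. The paper also obtains this theorem directly from Theorem~\ref{t:rkks} combined with Theorem~\ref{t:unic}: Theorem~\ref{t:rkks} gives the bijection between unitary equivalence classes of minimal Krein space linearisations and reproducing kernel Krein spaces with kernel $K$. Your check that a Krein unitary fixing every $K_xh$ must be the identity, so that ``uniqueness modulo unitary equivalence'' here means literal coincidence, is a useful clarification that the paper leaves implicit.
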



\subsection{Generalised $\Gamma$-Invariant Krein Space Linearisations}\label{ss:ihk}
In this subsection we consider $\bH$-valued Hermitian kernels that are invariant under actions of $*$-semigroupoids with unit.
The results will be generalisations of Theorem~\ref{t:invariantu} and Theorem~\ref{t:invariantul} 
to the case when representations are obtained
on $*$-semigroupoids modelled by operators, bounded or unbounded, on bundles of Krein spaces. To this end, we start by 
explaining the concepts of these $*$-semigroupoids. The difference with respect to Example~\ref{ex:segah} and 
Example~\ref{ex:semuh} will be performed by considering the Krein space involution instead of Hilbert space involution.

\begin{example}\label{ex:segahaba} (\emph{$*$-Semigroupoids Modelled by Bounded Operators on Krein Spaces})
Let $S$ be a nonempty set and $\bK=\{\cK_s\}_{s\in S}$ be a \emph{bundle of Krein spaces}
over $\FF$ for some nonempty set $S$.  
Since strong topologies on Krein spaces are provided by underlying Hilbert space structures 
and are unique, the organisation of the semigroupoid is similar to that explained in Example~\ref{ex:segah}. So, let 
\begin{equation}\label{e:gamabehab}
\Gamma_{\bK}=\bigsqcup_{s,t\in S} \cB(\cK_s,\cK_t),
\end{equation}
where $\cB(\cK_s,\cK_t)$ denotes the vector space of all bounded linear operators 
$T\colon \cK_s\ra\cK_t$. Then, the maps $d$ and $c$ are defined correspondingly and, similarly, the product on 
$\Gamma_{\bK}$. Thus, $(\Gamma_{\bK},S,d_{\bK},c_{\bK},\cdot)$ is a semigroupoid.

The semigroupoid $\Gamma_{\bK}$ has the unit $\epsilon\colon S\ra\Gamma_{\bK}$ defined by 
$\epsilon_s:=I_{\cK_s}$, where $I_{\cK_s}$ denotes the identity operator on $\cK_s$, for all $s\in S$.
It also has a natural involution: $\Gamma_{\bK}\ni T\mapsto T^\sharp\in\Gamma_{\bK}$ where, 
if $T\in\cB(\cK_s,\cK_t)$, by $T^\sharp\in\cB(\cK_t,\cK_s)$ we denote the Krein space adjoint operator.
In addition, the semigroupoid $\Gamma_{\bK}$ has a natural left action on $S$ defined by 
$T\cdot s:=t$, where $T\in\cB(\cK_s,\cK_t)$.
\end{example}

\begin{example}\label{ex:semuha}
 (\emph{$*$-Semigroupoids Modelled by Unbounded Operators in Krein Spaces.})
\label{ex:semesek} 
Let us assume, with notation as in Example~\ref{ex:semes},
that, for each $s\in S$, the vector space 
$\cD_s$ is a dense subspace of a Krein space $\cK_s$, and let
\begin{equation}\label{e:gabedek}
\Gamma_{\bK;\bD}:=\bigsqcup_{s,t\in S} \cL^\sharp(\cD_s,\cD_t),
\end{equation}
where, for each $s,t\in S$, we denote by $\cL^\sharp(\cD_s,\cD_t)$ the vector space of all
linear operators $T\colon \cD_s(\subseteq \cK_s)\ra\cK_t$, subject to the following assumptions.
\begin{itemize}
\item[(i)] $T\cD_s\subseteq \cD_t$.
\item[(ii)] $ \cD_t\subseteq \dom(T^\sharp)$ and $T^\sharp\cD_t\subseteq\cD_s$.
\end{itemize}
Here, by $T^\sharp\colon\dom(T^\sharp)(\subseteq \cK_t)\ra\cK_s$ we understand 
the adjoint operator (possibly unbounded) defined in the usual sense, see Subsection~\ref{ss:kstlo},
\begin{equation*}
\dom(T^\sharp):=\{k\in \cK_t\mid \cD_s\ni h\mapsto [Th,k]_{\cK_t}\mbox{ is bounded}\},
\end{equation*}
and
\begin{equation*}
[Th,k]_{\cK_t}=[h,T^\sharp k]_{\cK_s},\quad h\in\cD_s,\quad k\in\dom(T^\sharp).
\end{equation*}
Note that, in this way, any operator $T$ in $\cL^\sharp(\cD_s,\cD_t)$ is closable. The involution on $\Gamma_{\bK,\bD}$ is
defined by $\cL^\sharp(\cD_s,\cD_t)\ni T\mapsto T^\sharp|_{\cD_t}$,  for all $s,t\in S$.

Then, it is easy to see that $\Gamma_{\bK;\bD}\ni T\mapsto T^\sharp\in\Gamma_{\bK;\bD}$ is an involution
which makes $\Gamma_{\bK;\bD}$ a $*$-semigroupoid with unit. As in the previous example, 
there is a natural left action of $\Gamma_{\bK;\bD}$ onto $S$.
\end{example}

Let $K\colon X\times X\ra\Gamma_{\bH}$ be a kernel, where $\bH=\{\cH_x\mid x\in X\}$ is a bundle of Hilbert 
spaces over the field $\FF$, where the $*$-semigroupoid with
$\Gamma_{\bH}$ is defined in 
Example~\ref{ex:segah}. In particular, $K(x,y)\in \cB(\cH_y,\cH_x)$ for all $x,y\in X$. Recall the definitions of partially Hermitian 
and partially positive semidefinite, with respect to a partition of $X$, as in Subsection~\ref{ss:ipsdk}. We can have more general 
definitions, in terms of an arbitrary partition $P=\{X_s\}_{s\in S}$, where $S$ is a nonempty set, not necessarily the set of 
symbols of the semigroupoid $\Gamma$. We denote by $\fK^h_P(X;\bH)$ the real vector space of all $\bH$-operator valued
kernels which are partially Hermitian with respect to the partition $P$ and by $\fK^+_P(X;\bH)$ the convex cone of all 
$\bH$-operator valued kernels that are partially positive semidefinite with respect to the partition $P$.
If $K,L\in \fK^h_P(X;\bH)$, we say that $L$ \emph{partially dominates} $K$ with respect to the partition $P$, and we write
$K\leq_{\mathrm{P}}L$, if $L-K\in \fK^+_P(X;\bH)$. In particular, if $L\in \fK^h_P(X;\bH)$, we can equivalently write 
$L\geq_{\mathrm{P}}0$.

The next result is a localised version of Theorem~\ref{t:ksl} and its proof is a direct consequence of it, hence we omit it. We also 
include a characterisation in terms of reproducing kernel Krein spaces, via Theorem~\ref{t:rkks}.

\begin{theorem}\label{t:kslp}
Given a partition $P=\{X_s\}_{s\in S}$ of $X$ and $K\in\fK^h_P(X;\bH)$, the following assertions are equivalent.
\begin{itemize}
\item[(1)] There exists $L\in\fK^+_P(X;\bH)$ such that $-L\leq_{\mathrm{P}} K\leq_{\mathrm{P}} L$.
\item[(1)$^\prime$] There exists $L\in\fK^+_P(X;\bH)$ such that $K\leq_{\mathrm{P}} L$.
\item[(2)] There exists $L\in\fK^+_P(X;\bH)$ such that, for each $s\in S$, we have
\begin{equation}\label{e:fegald}
|[f,g]_K|\leq [f,f]_L^{1/2} [g,g]_L^{1/2},\quad f,g\in \cF_\FF^0(X_s;\bH).
\end{equation}
\item[(3)] $K=K_1-K_2$, for some $K_1,K_2\in \fK^+_P(X;\bH)$.
\item[(3)$^\prime$] $K=K_1-K_2$, for some disjoint $K_1,K_2\in \fK^+_P(X;\bH)$.
\item[(4)] There exists a pair $(\bK;\bV)$ with the following properties.
\begin{itemize}
\item[(LL1)] $\bK=\{\cK_s\}_{s\in S}$ is a bundle of Krein spaces.
\item[(LL2)] $\bV=\{V_x\}_{x\in X}$ is a bundle of bounded linear operators, 
$V_x\colon \cH_x\ra \cK_{a(x)}$ for each $x\in X$, such that, for all $s\in S$ and all $x,y\in X_s$, we have
 $K(x,y)=V_x^*J_{s}V_y$, where $J_{s}$ is a fixed, but otherwise arbitrary, fundamental symmetry of $\cK_{s}$,
 equivalently,
 \begin{equation}
 \langle K(x,y)k,h\rangle_{\cH_x}=[V_y k,V_x h]_{\cK_s},\quad h\in\cH_x,\ k\in\cH_y.
 \end{equation}
\item[(LL3)] For each $s\in S$, the Hilbert space $\cK_s$ is the closed span of $\{V_x \cH_x\mid x\in X,\ a(x)=s\}$.
\end{itemize}
\item[(5)] There exists a Krein space 
bundle $\bR=\{\cR_{K^s}\}_{s\in S}$, with $K^s=K|_{X_s\times X_s}$ and $\cR_{K^s}$ denoting 
a reproducing kernel $\bH_s$-valued Krein space with $\bH_s$-operator valued reproducing kernel $K^s$, where 
$\bH_s:=\{\cH_x\}_{x\in X_s}$, for all $s\in S$.
\end{itemize} 
\end{theorem}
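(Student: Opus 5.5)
The plan is to reduce everything to Theorem~\ref{t:ksl} and Theorem~\ref{t:rkks}, applied fibrewise. For each $s\in S$, put $\bH_s:=\{\cH_x\}_{x\in X_s}$ and $K^s:=K|_{X_s\times X_s}\in\fK^h(X_s;\bH_s)$. Here $K^s$ is Hermitian because $K$ is partially Hermitian with respect to $P$. The key observation is that partial notions with respect to $P$ are, by definition, the conjunction over $s\in S$ of the corresponding global notions for the restricted kernels.

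More precisely, I will check the following dictionary.
\begin{itemize}
\item[(i)] $L\in\fK^+_P(X;\bH)$ if and only if $L^s\in\fK^+(X_s;\bH_s)$ for all $s$.
\item[(ii)] $K\leq_{\mathrm P}L$ if and only if $K^s\leq L^s$ for all $s$.
\item[(iii)] Inequality \eqref{e:fegald} on $X_s$ is exactly \eqref{e:fegalu} for $K^s,L^s$.
\item[(iv)] $K=K_1-K_2$ on each $X_s\times X_s$ corresponds to the decompositions $K^s=K_1^s-K_2^s$.
\end{itemize}
Conversely, from a family of fibrewise kernels $\{L_s\}_{s\in S}$ one glues a kernel $L$ on $X$. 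Set $L(x,y):=L_s(x,y)$ when $x,y\in X_s$, and $L(x,y):=0$ when $a(x)\neq a(y)$. Entries off the diagonal blocks never enter any partial condition, so this choice is harmless. Gluing $K_1,K_2$ in (3) requires a little care, since $K$ itself need not vanish off the blocks. Define $K_1:=K_1^s$ on the blocks and $K_1:=K$ off the blocks, and $K_2:=K_2^s$ on the blocks and $K_2:=0$ off the blocks. Then $K=K_1-K_2$ globally, while both kernels remain partially positive semidefinite.

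With this dictionary, each of the assertions (1), (1)$'$, (2), (3), (3)$'$ is equivalent to saying that the corresponding assertion of Theorem~\ref{t:ksl} holds for $K^s$ for every $s$. Theorem~\ref{t:ksl} then gives their equivalence fibre by fibre, and gluing returns the partial statements. Disjointness in (3)$'$ is a partial notion: a partial minorant $P$ of both kernels restricts to a minorant of both $K_1^s$ and $K_2^s$, hence $P^s=0$ for each $s$. So disjointness is also checked fibrewise, provided we interpret disjointness in $\fK^+_P$ modulo off-block entries.

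For (4), Theorem~\ref{t:ksl} supplies a minimal Krein space linearisation $(\cK_s;V^s)$ of $K^s$ for each $s$. Set $V_x:=V^s(x)$ for $x\in X_s$, so that $V_x\colon\cH_x\ra\cK_{a(x)}$. The identity $K(x,y)=V_x^\sharp V_y=V_x^*J_sV_y$ comes from (kkd3) and \eqref{e:tesh}, taking $\cH_x$ as a Hilbert space with trivial symmetry, and (LL3) is minimality. Conversely, given $(\bK;\bV)$, the pair $(\cK_s;\{V_x\}_{x\in X_s})$ is a Krein space linearisation of $K^s$, and Theorem~\ref{t:ksl} gives (3) for $K^s$. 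For (5), Theorem~\ref{t:rkks}, applied to each $K^s$, states that $K^s$ has a reproducing kernel Krein space if and only if it has a minimal Krein space linearisation, which links (5) with (4) fibrewise.

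Nothing analytic is new here. The only step needing genuine care, and the one I expect to be the obstacle, is the bookkeeping for the off-block entries. Specifically, one must ensure that glued kernels lie in $\fK^h_P$ and that the global identity $K=K_1-K_2$ holds exactly, not only on the blocks. The fix is to absorb the off-block part of $K$ into $K_1$ as described above.
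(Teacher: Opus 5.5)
Your proposal is correct and follows the paper's route: the paper omits the proof because it is a direct consequence of Theorem~\ref{t:ksl}, applied blockwise to the restrictions $K^s$, together with Theorem~\ref{t:rkks} for item (5). Your extra bookkeeping supplies the details the paper leaves implicit. This covers gluing with zero off the blocks, absorbing the off-block part of $K$ into $K_1$, and reading disjointness modulo off-block entries, which is how the paper itself later defines partial disjointness.
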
 

A pair $(\bK;\bV)$ with properties (LL1) and (LL2) is called a  \emph{Krein space partial linearisation} 
of $K$ with respect to the partition $P$. If, in addition, property (LL3) holds, then we call it \emph{minimal}.

We consider now the setting as in Subsection~\ref{ss:ipsdk}, so
$X$ is a nonempty set and $(a,\cdot)$ is a left action of the $*$-semigroupoid $\Gamma$ on $X$, with $a\colon X\ra S$ the anchor map and 
$d\colon \Gamma\ra S$ and $c\colon \Gamma\ra S$ denote the domain map and the codomain map, 
respectively. Since the anchor map $a$ is surjective, for each $s\in S$ the set $X_s$ defined in \eqref{e:xes} is nonempty
and we have a partition of $X$ as in \eqref{e:part}, that we denote by $P_a$.

With notations and assumptions as before, 
if $x\in X$, let $\Gamma\cdot x:=\{\alpha\cdot x\mid \alpha\in \Gamma_{a(x)}\}$ be the $\Gamma$-orbit of $x$. We assume, in
addition, that condition \eqref{e:orbit} holds, equivalently, this means that the Hilbert space bundle 
$\{\cH_y\}_{y\in \Gamma\cdot x}$ is trivial, for all $x\in X$.
Recall that, the kernel $K$ is called \emph{invariant} under the action of $\Gamma$ on $X$, or simply 
\emph{$\Gamma$-invariant}, if condition \eqref{e:invariant} holds.

Our first goal is to show that $\Gamma$-invariant kernels yield certain generalised
$*$-representations of the $*$-semigroupoid $\Gamma$ on certain bundles of reproducing kernel Krein spaces, 
as a generalisation of Theorem~\ref{t:invariantu}. 

\begin{theorem}\label{t:invariantuk} With notation as before,
let $K\in \fK^h_{P_a}(X;\bH)$. The following assertions are equivalent.

\nr{1} $K$ is a
$\Gamma$-invariant kernel, in the sense that both \eqref{e:orbit} and \eqref{e:invariant} hold, such that there exists
$L\in  \fK^+_{P_a}(X;\bH)$ with the property that $K\leq_{\mathrm{P_a}} L$.

\nr{2} There exists a bundle $\bR=\{\cR_{K^s}\}_{s\in S}$, with $K^s=K|_{X_s\times X_s}$ and $\cR_{K^s}$ denoting 
a reproducing kernel $\bH_s$-valued Krein space with $\bH_s$-operator valued kernel $K^s$, where 
$\bH_s:=\{\cH_x\}_{x\in X_s}$, for all $s\in S$, and, letting
$\bD=\{\cD_s\}_{s\in S}$ with $\cD_s=\lin\{K^s_x\mid x\in X_s\}$, for all $s\in S$,
there exists a generalised $*$-representation $(\mathrm{Id}_S,\Phi)$, in the sense of 
Definition~\ref{d:urep}, where $\Phi\colon \Gamma\ra \Gamma_{\bR,\bD}$, where the $*$-semigroupoid $\Gamma_{\bR,\bD}$ is
defined as in Example~\ref{ex:semuha}, such that, for any $\alpha\in \Gamma$ and any $x\in X_{d(\alpha)}$, we have 
\begin{equation}\label{e:fivak}
K^{c(\alpha)}_{\alpha \cdot x}=\Phi(\alpha) K^{d(\alpha)}_x.
\end{equation}

\nr{3} There exists a quadruple $(\bK;\bD;\bV;\Phi)$ with the following properties.
\begin{itemize}
\item[(GIKL1)] The pair $(\bK;\bV)$ is a generalised Krein space minimal partial linearisation of $K$.
\item[(GIKL2)] $(\mathrm{Id}_S;\Phi)$ is a generalised $*$-representation of $\Gamma$ on $\Gamma_{\bK,\bD}$
and $\Phi(\alpha) V_x=V_{\alpha\cdot x}$ for all $x\in X$ and all $\alpha\in\Gamma_{a(x)}$.
\end{itemize}
\end{theorem}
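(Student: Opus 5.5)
The plan is to prove the cycle (1)$\Ra$(2)$\Ra$(3)$\Ra$(1), working fibrewise over $s\in S$ and reusing the proof of Theorem~\ref{t:invariantu}. The positivity arguments used there are replaced by the Krein space machinery of Theorem~\ref{t:kslp} and Theorem~\ref{t:rkks}.

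\emph{(1)$\Ra$(2).} Since $K\leq_{\mathrm{P_a}}L$ with $L\in\fK^+_{P_a}(X;\bH)$, Theorem~\ref{t:kslp}, implication (1)$^\prime\Ra$(5), gives for each $s\in S$ a reproducing kernel Krein space $\cR_{K^s}$ with kernel $K^s$.
\begin{itemize}
\item By (rkk4), $\cD_s=\lin\{K^s_x h\mid x\in X_s,\ h\in\cH_x\}$ is dense in $\cR_{K^s}$.
\item As in \eqref{e:phi}, I define $\Phi(\alpha)f:=(f_{\alpha^*\cdot x})_{x\in X_{c(\alpha)}}$ for $f\in\cD_{d(\alpha)}$.
\item The computation in the proof of Theorem~\ref{t:invariantu} used only $\Gamma$-invariance \eqref{e:invariant}. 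It therefore gives verbatim $\Phi(\alpha)\sum_j K_{y_j}h_j=\sum_j K_{\alpha\cdot y_j}h_j$. This shows that $\Phi(\alpha)\cD_{d(\alpha)}\subseteq\cD_{c(\alpha)}$ and that \eqref{e:fivak} holds.
\item Multiplicativity follows exactly as in \eqref{e:galp}.
\end{itemize}
For the adjoint relation, I compute with the reproducing property (rkk3) on generators:
\[[\Phi(\alpha)K_xh,K_yk]=[K_{\alpha\cdot x}h,K_yk]=\langle K(y,\alpha\cdot x)h,k\rangle=\langle K(\alpha^*\cdot y,x)h,k\rangle=[K_xh,\Phi(\alpha^*)K_yk].\]
This uses partial Hermitianity of $K$ on $X_{c(\alpha)}$ and $X_{d(\alpha)}$. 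Consequently $\cD_{c(\alpha)}\subseteq\dom(\Phi(\alpha)^\sharp)$, with $\Phi(\alpha)^\sharp|_{\cD_{c(\alpha)}}=\Phi(\alpha^*)|_{\cD_{c(\alpha)}}$. Hence $\Phi(\alpha)\in\cL^\sharp(\cD_{d(\alpha)},\cD_{c(\alpha)})$ and (UR5), (UR6) hold. One point needs care here: whether $\Phi(\alpha)$ is well defined on $\cD_{d(\alpha)}$ as a map of sections. It is, because $\Phi(\alpha)$ is defined pointwise on sections, not on representatives.

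\emph{(2)$\Ra$(3).} I pass through the correspondence of Theorem~\ref{t:rkks}, applied fibrewise.
\begin{itemize}
\item Set $\cK_s:=\cR_{K^s}$ and $V_x h:=K^{a(x)}_x h$.
\item Boundedness of $V_x$ follows from the reproducing property and Theorem~\ref{t:evop}-type continuity of evaluations in the Krein space. Fix a fundamental symmetry; then the evaluation $f\mapsto f(x)=$ pairing with $K_xh$ is continuous, and $V_x$ is its Krein adjoint.
\item (LL2) is (rkk3), and (LL3) is (rkk4).
\item With $\bD$ as in (2), relation \eqref{e:fivak} is exactly $\Phi(\alpha)V_x=V_{\alpha\cdot x}$, which gives (GIKL2).
\end{itemize}

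\emph{(3)$\Ra$(1).} From the minimal partial linearisation, Theorem~\ref{t:kslp}, implication (4)$\Ra$(1)$^\prime$, gives $L\in\fK^+_{P_a}$ with $K\leq_{\mathrm{P_a}}L$. Concretely, $L(x,y)=V_x^*V_y$ with respect to the Hilbert norms of the $J_s$. For invariance, take $\alpha\in\Gamma^{a(y)}_{a(x)}$. Then
\[\langle K(\alpha\cdot x,y)h,k\rangle=[V_xh,\Phi(\alpha)^\sharp V_yk]=[V_xh,\Phi(\alpha^*)V_yk]=[V_xh,V_{\alpha^*\cdot y}k]=\langle K(x,\alpha^*\cdot y)h,k\rangle.\]
This uses (UR6) together with the fact that $V\cH$ lies in $\cD$. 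That fact I would either take as part of the definition of $\bD$, as in (2), or justify by noting $\Phi(\alpha)V_x=V_{\alpha\cdot x}$ forces $V_x\cH_x\subseteq\dom\Phi(\alpha)$.

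Condition \eqref{e:orbit} is implicit throughout: it is needed for $\Phi(\alpha)V_x=V_{\alpha\cdot x}$ even to typecheck, so I treat it as a standing hypothesis. I expect the main obstacle to be the bookkeeping in (3)$\Ra$(1). There I must ensure that $\cD_s$ contains $V_x\cH_x$ so that the adjoint identity applies on the right vectors. The remaining steps are routine transcriptions of Theorem~\ref{t:invariantu}.
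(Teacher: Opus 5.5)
Your proposal is correct and follows the paper's proof. It runs the same cycle (1)$\Rightarrow$(2)$\Rightarrow$(3)$\Rightarrow$(1): the operator $\Phi(\alpha)f=(f_{\alpha^*\cdot x})_{x\in X_{c(\alpha)}}$ on the reproducing kernel Krein spaces from Theorem~\ref{t:kslp}, then Theorem~\ref{t:rkks} for (2)$\Rightarrow$(3), then the invariance computation together with $L^s(x,y)=V_x^*V_y$ for (3)$\Rightarrow$(1).

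Your check that $V_x\cH_x\subseteq\cD_{a(x)}$ makes explicit a step the paper uses silently; it is forced because $\Phi(\alpha)$ and $\Phi(\alpha^*)$ have domains $\cD_{d(\alpha)}$ and $\cD_{c(\alpha)}$. In your (3)$\Rightarrow$(1) display the arguments of $K$ are transposed relative to (LL2), but this is harmless: the identity you actually obtain, $K(y,\alpha\cdot x)=K(\alpha^*\cdot y,x)$, is \eqref{e:invariant} with $\alpha$ replaced by $\alpha^*$ and the roles of $x,y$ exchanged.
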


\begin{proof} (1)$\Ra$(2). We use Theorem~\ref{t:kslp} and get a Krein space bundle 
$\bR=\{\cR_{K^s}\}_{s\in S}$, with $K^s=K|_{X_s\times X_s}$ and $\cR_{K^s}$ is 
a reproducing kernel $\bH_s$-valued Krein space with $\bH_s$-operator valued kernel $K^s$, where 
$\bH_s:=\{\cH_x\}_{x\in X_s}$, for all $s\in S$. Then we practically proceed as in the proof of 
Theorem~\ref{t:invariantu}, observing that the positive semidefiniteness does not play any significant role, with 
the only difference that, instead of Hilbert spaces we get Krein spaces. Because of that, we will be short.

Let $\alpha\in \Gamma$. We define the linear operator $\Phi(\alpha)\colon \cD_{d(\alpha)}\ra \cD_{c(\alpha)}$ by
\begin{equation}\label{e:phia}
\Phi(\alpha)f:=(f_{\alpha^*\cdot x})_{x\in X_{c(\alpha)}},\quad f\in \cD_{d(\alpha)}.
\end{equation}
Inspecting the proof in Theorem~\ref{t:invariantu}, it follows that $(\mathrm{Id}_S,\Phi)$ is a generalised $*$-representation of 
the $*$-semigroupoid $\Gamma$ on $\Gamma_{\bR,\bD}$ in the sense of Definition~\ref{d:urep} and
we show that $\Phi$ has the property \eqref{e:fivak}.

(2)$\Ra$(3). This is actually a consequence of Theorem~\ref{t:rkks}.

(3)$\Ra$(1). We prove that $K$ is $\Gamma$-invariant. Indeed, let $x,y\in X$ and $\alpha \in\Gamma_{a(x)}^{a(y)}$ be  
arbitrary. Then, for any $h\in \cH_x$ and $k\in \cH_y$, we have
\begin{align*}
\langle K(\alpha\cdot x,y)k,h\rangle_{\cH_x} & = [V(y)k,V(\alpha\cdot x)h]_\cK= [V(y)k,\Phi(\alpha)V(x)h]_\cK \\
& = [\Phi(\alpha^*)V(y)k,V(x)h]_\cK = [V(\alpha^*\cdot y)k,V(x)h]_\cK \\
& = \langle V(x)^* V(\alpha^*y)k,h\rangle_{\cH_x} = \langle K(x,\alpha^*\cdot y)k,h]_\cK.
\end{align*}

We prove that $K$ is partially Hermitian with respect to the partition $P_a$. To see this, let $s\in S$ and $x,y\in X_s$ be arbitrary.
Then,
\begin{equation*}
K(x,y)^*=(V(x)^* J_s V(y))^*=V(y)^* J_s V(x)=K(y,x).
\end{equation*}

Finally, for each $s\in S$, let $L^s(x,y):=V(x)^*V(y)$, $x,y\in X_s$. Then define the $\bH$-operator valued kernel $L$ by
\begin{equation*}
L(x,y):=\begin{cases} L^s(x,y), & x,y\in X_s, \\
0,& \mbox{in the opposite case}.
\end{cases}
\end{equation*}
Then, fix $s\in S$ and let $x_1,\ldots, x_n\in X_s$ be arbitrary. 
We consider the operator block $n\times n$ matrix with $J_s$ on the 
diagonal and zero elsewhere, which is selfadjoint and dominated by the identity matrix, that is,
\begin{equation*}
\begin{bmatrix} J_s & 0 & \cdots & 0 \\ 0 & J_s & \cdots & 0 \\ \vdots & \vdots & \cdots & \vdots \\ 0 & 0 & \cdots & J_s
\end{bmatrix} \leq 
\begin{bmatrix} I_s & 0 & \cdots & 0 \\ 0 & I_s & \cdots & 0 \\ \vdots & \vdots & \cdots & \vdots \\ 0 & 0 & \cdots & I_s
\end{bmatrix},
\end{equation*}
where $I_s$ denotes the identity operator on the Hilbert space $(\cK_s;\langle \cdot,\cdot\rangle_{J_s})$. By multiplying both 
sides of the inequality to the 
left with the operator $\begin{bmatrix} V(x_1) & V(x_2) & \cdots & V(x_n)\end{bmatrix}$ and to the right with its adjoint, we 
preserve the inequality. Then, observing that,
\begin{equation*}
[K(x_i,x_j)]_{i,j=1}^n\! =\! \begin{bmatrix} V(x_1)^*J_s V(x_1) & V(x_1)^*J_sV(x_2) & \cdots & V(x_1)^*J_sV(x_n) \\ V(x_2)^*J_sV(x_1) & V(x_2)^*J_s 
V(x_2) & \cdots & V(x_2)^*J_sV(x_n) \\ \vdots & \vdots & \cdots & \vdots \\ V(x_n)^*J_s V(x_1) & V(x_n)^*J_s V(x_2) & \cdots & 
V(x_n)^*J_sV(x_n)
\end{bmatrix}
\end{equation*}
and that
\begin{equation*}
[L(x_i,x_j)]_{i,j=1}^n =
\begin{bmatrix} V(x_1)^* V(x_1) & V(x_1)^*V(x_2) & \cdots & V(x_1)^*V(x_n) \\ V(x_2)^*V(x_1) & V(x_2)^* 
V(x_2) & \cdots & V(x_2)^*V(x_n) \\ \vdots & \vdots & \cdots & \vdots \\ V(x_n)^* V(x_1) & V(x_n)^* V(x_2) & \cdots & 
V(x_n)^*V(x_n),
\end{bmatrix}
\end{equation*}
this proves that $[K(x_i,x_j)]_{i,j=1}^n\leq [L(x_i,x_j)]_{i,j=1}^n$ and, hence, $K\leq_{\mathrm{P_a}} L$.
\end{proof}

\subsection{$\Gamma$-Invariant Krein Space Linearisations}\label{ss:iksl}
Finally, we are interested in characterising those kernels  $K\in \fK^h_{P_a}(X;\bH)$ for which a left action of a $*$-semigroupoid 
$\Gamma$ on $X$ yields $*$-representations of $\Gamma$ with bounded operators, either on partial linearisations or on 
bundles of reproducing kernel Krein spaces of $K$. For this, we have to recall the concept of kernels 
$L\in \fK^+_{P_a}(X;\bH)$ 
that are of bounded shift type, as given in Subsection~\ref{ss:ipsdk}. Also, given 
$L_1,L_2\in \fK^+_{P_a}(X;\bH)$, we say that
$L_1$ and $L_2$ are disjoint if, whenever $L\in \fK^+_{P_a}(X;\bH)$ is such that 
$L\leq_{\mathrm{P_a}}L_1$ and $L\leq_{\mathrm{P_a}}L_2$, it follows that $L^s:=L|_{X_s\times X_s}$ is the 
zero kernel, for all $s\in S$.

\begin{theorem}\label{t:invariantukbp}
Let $K\in \fK^h_{P_a}(X;\bH)$. The following assertions are equivalent.

\nr{1} There exists $L\in\fK^+_P(X;\bH)$ of bounded shift type and
such that $-L\leq_{\mathrm{P_a}} K\leq_{\mathrm{P_a}} L$.

\nr{2} There exists $L\in\fK^+_P(X;\bH)$ of bounded shift type and 
such that, for each $s\in S$, we have
\begin{equation}\label{e:fegalt}
|[f,g]_K|\leq [f,f]_L^{1/2} [g,g]_L^{1/2},\quad f,g\in \cF_\FF^0(X_s;\bH_s).
\end{equation}

\nr{3} $K=K_1-K_2$ for some $K_1,K_2\in \fK^+_P(X;\bH)$, both of them of bounded shift type.

\nr{3}$^\prime$ $K=K_+-K_-$ for some disjoint $K_\pm\in \fK^+_P(X;\bH)$, both of them of 
bounded shift type.

\nr{4}  There exists a triple $(\bK;\bV;\Psi)$ with the following properties.
\begin{itemize}
\item[(IKL1)] The pair $(\bK;\bV)$ is a  minimal partial Krein space linearisation of $K$.
\item[(IKL2)] $(\mathrm{Id}_S;\Psi)$ is a $*$-representation of $\Gamma$ on $\Gamma_{\bK}$
such that, $\Psi(\alpha) V_x=V_{\alpha\cdot x}$, for all $x\in X$ and all $\alpha\in\Gamma_{a(x)}$.
\end{itemize}

\nr{5} There exists a Krein space bundle $\bR=\{\cR_{K^s}\}_{s\in S}$, with $K^s=K|_{X_s\times X_s}$ and $\cR_{K^s}$ is 
a reproducing kernel $\bH_s$-valued Krein space with $\bH_s$-operator valued reproducing kernel $K^s$, 
where $\bH_s:=\{\cH_x\}_{x\in X_s}$, for all $s\in S$, and
there exists a  $*$-representation $(\mathrm{Id}_S,\Phi)$, in the sense of 
Definition~\ref{d:serep}, where $\Phi\colon \Gamma\ra \Gamma_{\bR}$ and the $*$-semigroupoid $\Gamma_{\bR}$ is
defined as in Example~\ref{ex:segahaba}, such that, for any $\alpha\in \Gamma$ and any $x\in X_{d(\alpha)}$, we have 
\begin{equation}\label{e:fivaka}
K^{c(\alpha)}_{\alpha \cdot x}=\Phi(\alpha) K^{d(\alpha)}_x.
\end{equation}
\end{theorem}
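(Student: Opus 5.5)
We prove the cycle (1)$\Ra$(2)$\Ra$(4)$\Ra$(3)$^\prime$$\Ra$(3)$\Ra$(1), and separately (4)$\Leftrightarrow$(5) via Theorem~\ref{t:rkks}, applied fibrewise for each $s\in S$.

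The easy implications come first. (1)$\Ra$(2) is the fibrewise Cauchy--Schwarz argument already used in Theorem~\ref{t:ksl}. For $s\in S$, the forms $\langle\cdot,\cdot\rangle_{L\pm K}$ are nonnegative on $\cF_\FF^0(X_s;\bH_s)$, and polarisation gives \eqref{e:fegalt}; the same $L$ is kept, so bounded shift type is preserved. (3)$^\prime$$\Ra$(3) is trivial. For (3)$\Ra$(1), set $L:=K_1+K_2$. Then $-L\leq_{\mathrm{P_a}}K\leq_{\mathrm{P_a}}L$. Moreover $L$ is of bounded shift type: adding the two inequalities \eqref{e:bounded} for $K_1$ and $K_2$ gives the inequality for $L$ with constant $M_\alpha:=\max\{M_\alpha^{(1)},M_\alpha^{(2)}\}$.

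The main step is (2)$\Ra$(4). Fix $s$ and follow the proof of (2)$\Ra$(4) in Theorem~\ref{t:ksl} on $X_s$. Let $\cK_{L,s}$ be the Hilbert space induced by $\langle\cdot,\cdot\rangle_{L^s}$ on $\cF_\FF^0(X_s;\bH_s)$, and let $G_s\in\cB(\cK_{L,s})$ be the Gram operator of $[\cdot,\cdot]_{K^s}$. Put $\cK_s:=\cK_{G_s}$ as in Example~\ref{ex:kas}, with $V_x h:=\Pi_{G_s}(\delta_xh+\cN_{L^s})$. Since $L$ is of bounded shift type, Lemma~\ref{l:bst} shows that the shift $\Psi(\alpha)$ of \eqref{e:psias} induces a bounded operator $T_\alpha\colon\cK_{L,d(\alpha)}\ra\cK_{L,c(\alpha)}$, and likewise $S_\alpha:=T_{\alpha^*}$ is bounded.

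Here $\Gamma$-invariance of $K$ is needed. I would either assume it as part of the standing hypotheses or derive it (see below). Given invariance, the computation \eqref{e:lapsi} carried out with $[\cdot,\cdot]_K$ gives $[T_\alpha f,g]_K=[f,T_{\alpha^*}g]_K$, that is, $G_{c(\alpha)}T_\alpha=T_{\alpha^*}^*G_{d(\alpha)}$. Theorem~\ref{t:abetul}, with $A=G_{d(\alpha)}$, $B=G_{c(\alpha)}$, $T=T_\alpha$ and $S=T_{\alpha^*}$, then yields a unique bounded $\widetilde T_\alpha\colon\cK_{d(\alpha)}\ra\cK_{c(\alpha)}$ with $\widetilde T_\alpha\Pi_{G_{d(\alpha)}}=\Pi_{G_{c(\alpha)}}T_\alpha$. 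Set $\Psi(\alpha):=\widetilde T_\alpha$.

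Three identities must then be checked, each on the dense set $\Pi(\cF^0)$ and extended by continuity:
\begin{itemize}
\item multiplicativity, $\Psi(\alpha\beta)=\Psi(\alpha)\Psi(\beta)$, which follows from Lemma~\ref{l:psi} together with the uniqueness in Theorem~\ref{t:abetul};
\item the Krein adjoint relation $\Psi(\alpha)^\sharp=\Psi(\alpha^*)$, obtained from $[\widetilde T_\alpha\Pi f,\Pi g]=[T_\alpha f,g]_K=[f,T_{\alpha^*}g]_K$;
\item the intertwining relation $\Psi(\alpha)V_x=V_{\alpha\cdot x}$, which is \eqref{e:psivex}.
\end{itemize}
Minimality holds by construction.

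The expected obstacle is that the statement lists $\Gamma$-invariance of $K$ nowhere explicitly in (1)--(3). I would treat it as part of the standing setting of this subsection, as Theorem~\ref{t:invariantuk} did. Conversely, (4) forces invariance by the computation in (3)$\Ra$(1) of Theorem~\ref{t:invariantuk}, so the equivalences are consistent.

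For (4)$\Ra$(3)$^\prime$, fix a fundamental symmetry $J_s=J_s^+-J_s^-$ of each $\cK_s$ and set $K_\pm(x,y):=V_x^*J_s^\pm V_y$ for $x,y\in X_s$. Disjointness follows fibrewise as in Theorem~\ref{t:ksl}. For bounded shift type, write $K_++K_-=V_x^*V_y=:L$. Then
\[
\sum\langle L(\alpha\cdot x,\alpha\cdot y)g_y,g_x\rangle=\bigl\|\Psi(\alpha)\textstyle\sum V_yg_y\bigr\|^2\leq\|\Psi(\alpha)\|^2\bigl\|\textstyle\sum V_yg_y\bigr\|^2 ,
\]
using boundedness of $\Psi(\alpha)$ in the unitary norms.

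The delicate point is that $K_\pm$ individually involve $J^\pm_s$, which need not be intertwined by $\Psi(\alpha)$. We have $\|J^\pm\Psi(\alpha)u\|\leq\|\Psi(\alpha)\|\,\|u\|$, but the right-hand side of \eqref{e:bounded} requires $\|J^\pm u\|$, not $\|u\|$. I would therefore try instead to show that (3) with $L$ of bounded shift type is what is really needed. Indeed, (4)$\Ra$(1) holds directly with this $L$. Then (3)$^\prime$ would be obtained from (1) by applying the construction to the Gram operator $G_s$ and taking $K_\pm$ from its spectral parts $G_s^\pm$, with each dominated by $L$. This is the part of the argument I expect to be hardest to make airtight.

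Finally, (4)$\Leftrightarrow$(5) transports $\Psi$ through the correspondence $\cR(\cK_s;V)$ of Theorem~\ref{t:rkks}, setting $\Phi(\alpha)(V_\cdot^\sharp f):=V_\cdot^\sharp\Psi(\alpha)f$. This map is bounded, and \eqref{e:fivaka} follows from $\Psi(\alpha)V_x=V_{\alpha\cdot x}$.
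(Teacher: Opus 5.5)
\textbf{Gap: assertion (3)$^\prime$.} Your cycle breaks at (3)$^\prime$, and no variant of your fix can close it, because (3)$^\prime$ does not follow from the other assertions. You are right that bounded shift type of $K_++K_-$ says nothing about $K_+$ and $K_-$ separately. The paper's own proof of (4)$\Ra$(3)$^\prime$ in fact verifies only the sum.

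Your replacement via the spectral parts $G_s^\pm$ fails for the same reason. Domination by $L$ bounds the left side of \eqref{e:bounded} by $\|T_\alpha u\|^2$ in $\cK_{L,c(\alpha)}$, but the right side must be $\langle G^\pm_{d(\alpha)}u,u\rangle$, not $\|u\|^2$. Structurally, if $K=K_+-K_-$ with $K_\pm$ disjoint, then $(\cK_{K_+}\oplus\cK_{K_-};V_+\oplus V_-)$ is a minimal Krein space linearisation. Bounded shift type of both parts makes the lifted $\Psi(\alpha)$ diagonal there, i.e.\ it commutes with a bundle of fundamental symmetries. That is condition (IKL3) of Theorem~\ref{t:invariantukb}, and (4) does not imply it.

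\textbf{Counterexample.} Let $\Gamma=X=\mathbb{Z}$ act by translations (one symbol), with $\cH_x=\CC$. Give $\CC^2$ the inner product $[u,w]=u_1\ol{w_2}+u_2\ol{w_1}$, and let $U=\mathrm{diag}(2,1/2)$, which is Krein unitary. Put $v=(1,1)$, $V_n:=U^nv$, and $\Psi(k):=U^k$. Then $K(m,n)=[V_n,V_m]=2^{n-m}+2^{m-n}$ is $\Gamma$-invariant and (4) holds. Assertion (1) also holds, with $L(m,n)=\langle V_n,V_m\rangle_{\CC^2}$. Since $K$ has rank two, every minimal Krein linearisation of $K$ is two-dimensional and unitarily equivalent to $(\CC^2;V)$. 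A decomposition as in (3)$^\prime$ would therefore make $U$ commute with a fundamental symmetry of $\CC^2$. Then $U$ would be unitary for the associated Hilbert norm, which is impossible since $\sigma(U)=\{2,1/2\}$.

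\textbf{What survives.} The rest of your plan is sound:
\begin{itemize}
\item (4)$\Ra$(1) with $L(x,y)=V_x^*V_y$ on each fibre is correct, and so is your (3)$\Ra$(1).
\item (1)$\Ra$(3) holds directly with $K_1:=K+2L$ and $K_2:=2L$. Using $K\leq_{\mathrm{P_a}}L$, the bounded shift type of $L$, and $L\leq_{\mathrm{P_a}}K_1$, you get $[\Psi(\alpha)g,\Psi(\alpha)g]_{K_1}\leq 3[\Psi(\alpha)g,\Psi(\alpha)g]_L\leq 3M_\alpha[g,g]_L\leq 3M_\alpha[g,g]_{K_1}$.
\item Your (2)$\Ra$(4) follows the paper's route: the Gram operator, the induced Krein space of Example~\ref{ex:kas}, and lifting by Theorem~\ref{t:abetul}. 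You are more careful than the paper about multiplicativity and about $\Psi(\alpha)^\sharp=\Psi(\alpha^*)$.
\item Your (4)$\Leftrightarrow$(5) via Theorem~\ref{t:rkks} matches the paper.
\end{itemize}
You are also right that $\Gamma$-invariance of $K$ is missing from (1)--(3); the paper's proof of (2)$\Ra$(4) uses it tacitly. With that hypothesis added, (1), (2), (3), (4) and (5) are equivalent. Assertion (3)$^\prime$ must be removed from this theorem, since it belongs with the fundamental reducibility condition (IKL3) of Theorem~\ref{t:invariantukb}.
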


\begin{proof} (1)$\Leftrightarrow$(2). This is essentially obtained as in Proposition~38 in \cite{Schwartz}, see also Theorem~3.1 
in \cite{ConstantinescuGheondea1}, and we do not repeat the details.

(3)$^\prime\Ra$(3). Clear.

(3)$\Ra$(1). Clear.

(4)$\Leftrightarrow$(5). Any reproducing kernel Krein space is a minimal Krein space linearisation and the converse is obtained 
as in Theorem~\ref{t:rkks}.

(2)$\Ra$(4). Let $L\in\fK^+_P(X;\bH)$ be of bounded shift type and 
such that, for each $s\in S$, the inequality \eqref{e:fegalt} holds. For each $s\in S$, we denote by $\cG_s$ the Hilbert space 
obtained as a quotient-completion of the vector space $\cF_\FF^0(X_s;\bH_s)$ with respect to the positive semidefinite
inner product $\langle\cdot,\cdot\rangle_{L^s}$ defined as 
in \eqref{e:ipk}. Considering the convolution operator $C_{L^s}$ defined as in \eqref{e:conv}, we have
\begin{equation*}
\langle C_{L^s}f,g\rangle_0=\langle f,g\rangle_{L^s},\quad f,g\in \cF_\FF^0(X_s;\bH_s).
\end{equation*} 
With notation as in \eqref{e:psia}, \eqref{e:psias}, and \eqref{e:cedes}, we consider the semigroupoid morphism 
$(\mathrm{Id}_S,\Psi)$, see Lemma~\ref{l:psi}. Since $L$ is of bounded shift type, for each $\alpha\in\Gamma$, in view of
Lemma~\ref{l:bst}, the operator $\Psi(\alpha)$ uniquely induces a bounded linear operator between the Hilbert spaces
$\cG_{d(\alpha)}$ and $\cG_{c(\alpha)}$, denoted yet by $\Psi(\alpha)$. In view of the multiplicativity property of $\Psi$, see
Lemma~\ref{l:psi}, in this way we obtain a semigroupoid representation $\Psi\colon \Gamma\ra \Gamma_{\bG}$, where the 
semigroupoid $\Gamma_{\bG}$ is defined as in Example~\ref{ex:segah}.

Since $K$ is $\Gamma$-invariant and observing that the fact that kernel $K$ in \eqref{e:lapsi} is partially positive semidefinite 
does not play any role, what actually matters is that it is partially Hermitian, exactly in 
the same fashion we can prove that, for any $\alpha\in\Gamma$,  $f\in \cF^0_\FF(X_{d(\alpha)};\bH_{d(\alpha)})$, and
$g\in \cF_\FF^0(X_{c(\alpha)};\bH_{c(\alpha)})$, we have
\begin{equation}\label{e:sepsi}
[\Psi(\alpha)f,g]_K=[f,\Psi(\alpha^*)g]_K.
\end{equation}

In view of the inequality \eqref{e:fegalt},  for each $s\in S$,
the indefinite inner product $[\cdot,\cdot]_{K^s}$ factors and extends to the Hilbert space 
$\cG_s$, and the same inequality holds, hence there exists uniquely a selfadjoint contraction 
$G_{K^s}\in\cB(\cG_s)$, the Gram operator of $[\cdot,\cdot]_{K^s}$ with respect to 
$\langle\cdot,\cdot\rangle_{L^s}$. Using again
the quotient completion method of inducing a Krein space from a selfadjoint bounded operator, 
as described in Example~\ref{ex:kas}, we produce the Krein space $\cK_s:=\cK_{G_{K^s}}$.
From \eqref{e:sepsi}, it follows that, for each $\alpha\in\Gamma$, the operators $\Psi(\alpha)$ and 
$\Psi(\alpha^*)$ are adjoints of each other, with respect to the indefinite inner products 
$[\cdot,\cdot]_{K^{d(\alpha)}}$ and $[\cdot,\cdot]_{K^{c(\alpha)}}$, 
hence Theorem~\ref{t:abetul} is applicable and, consequently, both $\Psi(\alpha)$ and $\Psi(\alpha^*)$ 
can be lifted as bounded linear operators, 
\begin{equation}
\Psi(\alpha)\colon \cK_{d(\alpha)}\ra \cK_{c(\alpha)},\quad \Psi(\alpha^*)\colon \cK_{c(\alpha)}\ra \cK_{d(\alpha)},
\end{equation}
such that
\begin{equation}
[\Psi(\alpha)f,g]_{K}=[f,\Psi(\alpha^*)g]_K,\quad f\in \cK_{d(\alpha)},\ g\in \cK_{c(\alpha)},
\end{equation}
that is, $(\mathrm{Id}_S;\Psi)$ can be viewed as a $*$-representation of $\Gamma$ on the 
$*$-semigroupoid $\Gamma_{\bK}$, defined as in Example~\ref{ex:segahaba}.

For each $x\in X$, the operator $V(x)\colon \cH_x\ra \cK_{a(x)}$ is defined in the natural fashion
\begin{equation}\label{e:vexah}
V(x)h:= \delta_x h+\cN_{K^{a(x)}},\quad h\in\cH_x.
\end{equation}
We observe that, for each $x\in X$, considering the map $\cH_x\ni h\mapsto 
V_L(x)h:=\delta_x h+\cN_{L^{a(x)}}\in 
\cG_{a(x)}$ and the canonical operator $\Pi_{G_{L^{a(x)}}}\colon \cG_{a(x)}\ra \cK_{a(x)}=\cK_{G_{L^{a(x)}}}$, we have
\begin{equation}\label{e:vexapi}
V(x)= \Pi_{G_{L^{a(x)}}} \circ V_L(x).
\end{equation}
Indeed, by \eqref{e:fegalt}, we have $\cN_{L^{a(x)}}\subseteq \cN_{K^{a(x)}}$ and, consequently,
\begin{equation*}
\cN_{K^{a(x)}}=\cN_{L^{a(x)}}+\ker(G_{L^{a(x)}}).
\end{equation*}
Then, for each $h\in \cH_x$ we have
\begin{align*}
\bigl(\Pi_{G_{L^{a(x)}}} \circ V_L(x)\bigr)h & =  \Pi_{G_{L^{a(x)}}} (\delta_x h+ \cN_{L^{a(x)}})\\
& =\delta_x h+\bigl(\cN_{L^{a(x)}}+\ker(G_{L^{a(x)}})\bigr)=\delta_x h+ \cN_{K^{a(x)}},
\end{align*}
hence \eqref{e:vexapi} is proven and, consequently, $V(x)$ is a bounded linear operator. 
Then, for each $x,y\in X$, 
such that $a(x)=a(y)=s$, and each $h\in\cH_x$ and $k\in\cH_y$, we have
\begin{align*}
[V(y)k,V(x)h]_{\cK_s} & =[\delta_y k+\cN_{K^s},\delta_x h+\cN_{K^s}]_{K^s}=\langle C_{K^s}\delta_y k,\delta_x h\rangle_0 \\
& = \sum_{z\in X_s}\langle (C_{K^s}\delta_y k)(z),\delta_x(z)h\rangle_{\cH_x}=\langle (C_{K^s}\delta_yk)(x),h\rangle_{\cH_x} \\
& = \langle \sum_{z\in X_s}\delta_y(z)K^s(x,z)k,h\rangle_{\cH_x} = \langle K(x,y)k,h\rangle_{\cH_x}.
\end{align*}
All these prove that the triple $(\bK;\bV;\Psi)$ is a $\Gamma$-invariant minimal Krein space linearisation of $K$.

(4)$\Ra$(3)$^\prime$. Let $(\bK;\bV;\Psi)$ be a triple subject to the conditions (IKL1) and (IKL2). For each $s\in S$, let $J_s$ be 
a fundamental symmetry on $\cK_s$ and consider the Hilbert space inner product $\langle \cdot,\cdot\rangle_{J_s}$ induced by 
$J_s$, that is, 
\begin{equation}
\langle h,k\rangle_{J_s}=[J_s h,k]_{\cK_s},\quad h,k\in \cK_s.
\end{equation}
Let $J_s=J_s^+-J_s^-$ be the Jordan decomposition of $J_s$, that is, $J_s^\pm$ is the spectral projection of $\cK_s$ onto 
$\ker(I_{\cK_s}\mp J_s)$, respectively. Then $I_{\cK_s}=J_s^++J_s^-$ and $J_s^+J_s^-=0$. For each $x,y\in X$, define
\begin{equation}\label{e:kpm}
K_\pm(x,y)= \begin{cases} V^*(x)J_s^\pm V(y), & a(x)=a(y)=s,\\
0,& \mbox{in the opposite case}.
\end{cases}
\end{equation}
Clearly, $K_\pm$ are positive semidefinite and the same argument as the one used in proof of Theorem~\ref{t:ksl} shows that
they are disjoint. It remains to prove that the $\bH$-operator valued kernel $K_++K_-$ is of bounded shift type. To see this, let
$\alpha\in\Gamma$ be arbitrary. Then,
\begin{align*}
\sum_{x,y\in X_{d(\alpha)}} & \!\!\!\!
\langle (K_+(\alpha\cdot x, \alpha\cdot y)  +K_-(\alpha\cdot x,\alpha\cdot y))g_y,g_x\rangle_{\cH_x} \\
&  \!\!\!\!\!\!\! = \!\!\!\!\!\! \sum_{x,y\in X_{d(\alpha)}} \!\!\!\!\! \langle (V(\alpha\cdot x)^*J_s^+V(\alpha\cdot y)
+V(\alpha\cdot x)^*J_s^-V(\alpha\cdot y))g_y,g_x\rangle_{\cH_x} \\
&  \!\!\!\!\!\!\! = \!\!\!\!\!\!\!\!\sum_{x,y\in X_{d(\alpha)}}  \!\!\!\!\! \langle (V(x)^*\Psi(\alpha)^*J_s^+\Psi(\alpha)V(y)
+V(x)^*\Psi(\alpha)^*J_s^-\Psi(\alpha)V(y))g_y,g_x\rangle_{\cH_x} \\
&  = \bigl\langle J_s^+ \Psi(\alpha)\sum_{y\in X_{d(\alpha)}} V(y)g_y,J_s^+ \Psi(\alpha)\sum_{x\in X_{d(\alpha)}} V(x)g_x\bigr\rangle_{J_s} 
\\
& \phantom{= \sum_{x,y\in X_{d(\alpha)}}} +
\bigl\langle J_s^- \Psi(\alpha) \sum_{y\in X_{d(\alpha)}} V(y)g_y,J_s^- \Psi(\alpha)\sum_{x\in X_{d(\alpha)}} V(x)g_x\bigr\rangle_{J_s}\\
\intertext{which, taking into account that $J_s^\pm$ are orthogonal projections, $J_s^+J_s^-=0$, and $J_s^++J_s^-=I_s$, equals} 
& = \bigl\langle \Psi(\alpha) \sum_{y\in X_{d(\alpha)}}V(y)g_y,\Psi(\alpha)\sum_{x\in X_{d(\alpha)}}V(x)g_x\bigr\rangle_{J_s} \\
\intertext{which, by the same argument as before, equals}
& \leq \|\Psi(\alpha)\|^2   \bigl\langle \sum_{y\in X_{d(\alpha)}}V(y)g_y,\sum_{x\in X_{d(\alpha)}}V(x)g_x\bigr\rangle_{J_s}\\
& =  \|\Psi(\alpha)\|^2 \biggl(  \bigr\langle J_s^+\sum_{y\in X_{d(\alpha)}}V(y)g_y,J_s^+\sum_{x\in X_{d(\alpha)}}V(x)g_x\bigr\rangle_{J_s}\biggr. \\
& \phantom{= \sum_{x,y\in X_{d(\alpha)}}} + \biggl. \bigl\langle J_s^-\sum_{y\in X_{d(\alpha)}}V(y)g_y,J_s^-\sum_{x\in X_{d(\alpha)}}V(x)g_x\bigr\rangle_{J_s}\biggr) \\
& = \|\Psi(\alpha)\|^2\biggl( \sum_{x,y\in X_{d(\alpha)}}\!\!\! \langle \bigl(V(x)^*J_s^+V(y)+V(x)^*J_s^-V(y)g_y,g_x\rangle_{\cH_x}
\biggr)  \\
& = \|\Psi(\alpha)\|^2 \sum_{x,y\in X_{d(\alpha)}} \langle \bigl(K_+(x,y) +K_-(x,y)\bigr)g_y,g_x\rangle_{\cH_x}.\qedhere
\end{align*}
\end{proof}

\begin{remark} According to Corollary~\ref{c:invsem}, if $\Gamma$ is an inverse semigroupoid, then in assertions (1) through (3) 
in the previous theorem, we do not have to ask that those partially positive semidefinite kernels are of bounded shift type since 
this happens automatically.
\end{remark}

The last theorem answers the question: what happens if, in items (1) through (3) of Theorem~\ref{t:invariantukbp}, 
we ask that the partially positive semidefinite kernels $L$ be $\Gamma$-invariant?

\begin{theorem}\label{t:invariantukb}
Let $K\in \fK^h_{P_a}(X;\bH)$. The following assertions are equivalent.

\nr{1} There exists $L\in\fK^+_{P_a}(X;\bH)$, $\Gamma$-invariant and of bounded shift type, 
such that $-L\leq_{\mathrm{P_a}} K\leq_{\mathrm{P_a}} L$.

\nr{2} There exists $L\in\fK^+_{P_a}(X;\bH)$, $\Gamma$-invariant and of bounded shift type, 
such that, for each $s\in S$, we have
\begin{equation}\label{e:fegalp}
|[f,g]_K|\leq [f,f]_L^{1/2} [g,g]_L^{1/2},\quad f,g\in \cF_\FF^0(X_s;\bH).
\end{equation}

\nr{3} $K=K_1-K_2$ for some $K_1,K_2\in \fK^+_{P_a}(X;\bH)$, both of them $\Gamma$-invariant and of bounded shift type.

\nr{3}$^\prime$ $K=K_+-K_-$ for some disjoint $K_\pm\in \fK^+_{P_a}(X;\bH)$, both of them $\Gamma$-invariant and of 
bounded shift type.

\nr{4} There exists a triple $(\bK;\bV;\Psi)$ with the following properties.
\begin{itemize}
\item[(IKL1)] The pair $(\bK;\bV)$ is a Krein space minimal partial linearisation of $K$.
\item[(IKL2)] $(\mathrm{Id}_S;\Psi)$ is a $*$-representation of $\Gamma$ on $\Gamma_{\bK}$
such that, $\Psi(\alpha) V_x=V_{\alpha\cdot x}$ for all $x\in X$ and all $\alpha\in\Gamma_{a(x)}$.
\item[(IKL3)] There exists a distinguished bundle $\{J_s\}_{s\in S}$, where $J_s$ is a fundamental symmetry on $\cK_s$, for all 
$s\in S$, such that $\Psi(\alpha)J_{d(\alpha)}=J_{c(\alpha)}\Psi(\alpha)$ holds for all $\alpha\in\Gamma$. 
\end{itemize}

\nr{5} There exist a Krein space bundle $\bR=\{\cR_{K^s}\}_{s\in S}$ where, for each $s\in S$, 
$K^s=K|_{X_s\times X_s}$ and $\cR_{K^s}$ is 
a reproducing kernel $\bH_s$-valued Krein space with $\bH_s$-operator valued reproducing 
kernel $K^s$ with a distinguished fundamental 
symmetry $J_s$, where $\bH_s:=\{\cH_x\}_{x\in X_s}$, and a  $*$-representation $(\mathrm{Id}_S,\Phi)$, in the sense of 
Definition~\ref{d:serep}, where $\Phi\colon \Gamma\ra \Gamma_{\bR}$ and the $*$-semigrou\-poid $\Gamma_{\bR}$ is
defined as in Example~\ref{ex:segahaba}, such that, we have 
\begin{equation}\label{e:pipaj} \Phi(\alpha)J_{d(\alpha)}=J_{c(\alpha)}\Phi(\alpha),\quad \alpha\in \Gamma,
\end{equation}
and
\begin{equation}\label{e:fivakad}
K^{c(\alpha)}_{\alpha \cdot x}=\Phi(\alpha) K^{d(\alpha)}_x,\quad \alpha\in\Gamma,\ x\in X_{d(\alpha)}
\end{equation}
\end{theorem}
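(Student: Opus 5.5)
We follow the pattern of the proof of Theorem~\ref{t:invariantukbp}, adding the invariance of the majorant and the joint fundamental symmetries. We establish the cycle (1)$\Leftrightarrow$(2), (3)$^\prime\Ra$(3)$\Ra$(1), (2)$\Ra$(4), (4)$\Ra$(3)$^\prime$, and (4)$\Leftrightarrow$(5).

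\textbf{Routine steps.} The equivalence (1)$\Leftrightarrow$(2) is the localised Schwartz argument already quoted; it does not touch invariance or bounded shift type, since the same $L$ serves in both statements. The implication (3)$^\prime\Ra$(3) is trivial. For (3)$\Ra$(1) we take $L:=K_1+K_2$. Sums of $\Gamma$-invariant kernels are $\Gamma$-invariant, and sums of kernels of bounded shift type are of bounded shift type, with constant $M_\alpha:=\max(M^1_\alpha,M^2_\alpha)$. Moreover $-L\leq_{\mathrm{P_a}}K\leq_{\mathrm{P_a}}L$ holds because $L-K=2K_2$ and $L+K=2K_1$. For (4)$\Leftrightarrow$(5) we use the correspondence of Theorem~\ref{t:rkks} fibrewise. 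It transports $\Psi$ to $\Phi$ with $\Phi(\alpha)K^{d(\alpha)}_x=K^{c(\alpha)}_{\alpha\cdot x}$, and it transports the fundamental symmetries $J_s$ by the same unitary identifications, so the intertwining relations are preserved.

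\textbf{(2)$\Ra$(4).} We repeat the construction in the proof of Theorem~\ref{t:invariantukbp}. Let $\cG_s$ be the quotient-completion of $\cF^0_\FF(X_s;\bH_s)$ with respect to $\langle\cdot,\cdot\rangle_{L^s}$. Because $L$ is $\Gamma$-invariant, identity \eqref{e:lapsi} gives $\langle\Psi(\alpha)f,g\rangle_L=\langle f,\Psi(\alpha^*)g\rangle_L$, and bounded shift type makes the $\Psi(\alpha)$ bounded on the $\cG_s$ by Lemma~\ref{l:bst}. Hence $\Psi$ is a genuine Hilbert space $*$-representation on $\{\cG_s\}$. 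On the other hand, \eqref{e:sepsi} gives $G_{K^{c(\alpha)}}\Psi(\alpha)=\Psi(\alpha^*)^*G_{K^{d(\alpha)}}=\Psi(\alpha)G_{K^{d(\alpha)}}$, using the Hilbert adjoint in the $\cG$'s. Thus the Gram operators intertwine $\Psi$.

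The main obstacle is to pass from this intertwining to intertwining with the fundamental symmetries. We intend to use the Borel functional calculus. Writing $T=\Psi(\alpha)$, $A=G_{K^{d(\alpha)}}$, $B=G_{K^{c(\alpha)}}$, we have $BT=TA$ and $AT^*=T^*B$ (the latter from $\alpha^*$). We first show $p(B)T=Tp(A)$ for every polynomial $p$. Passing to the limit by the spectral theorem, for instance via the continuous calculus applied to $\begin{bmatrix}A&T^*\\T&B\end{bmatrix}$-type arguments or the Fuglede--Putnam-style reasoning for selfadjoint operators, we obtain $f(B)T=Tf(A)$ for bounded Borel $f$. In particular this holds for $|\cdot|^{1/2}$ and for $\operatorname{sgn}$, which gives $S_BT=TS_A$ and $|B|^{1/2}T=T|A|^{1/2}$.

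We then realise $\cK_s$ as $\cB_{G_{K^s}}$ from Example~\ref{ex:kasu}, with $J_s$ the compression of $S_{G_{K^s}}$. The lifted operator of Theorem~\ref{t:abetu} acts as $\Psi(\alpha)$ restricted to $\ran(|A|^{1/2})$; it is bounded because $|B|^{1/2}T=T|A|^{1/2}$, and it commutes with the $J$'s because $S_BT=TS_A$. This gives (IKL3). The remaining properties, (IKL1) and $\Psi(\alpha)V_x=V_{\alpha\cdot x}$, follow as in Theorem~\ref{t:invariantukbp} through \eqref{e:vexapi}.

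\textbf{(4)$\Ra$(3)$^\prime$.} We define $K_\pm$ by \eqref{e:kpm}, using the distinguished $J_s$. Disjointness and bounded shift type follow exactly as in Theorem~\ref{t:invariantukbp}. For $\Gamma$-invariance, (IKL3) shows that $\Psi(\alpha)$ commutes with the projections $J^\pm$. Combined with the fact that $\Psi(\alpha^*)=\Psi(\alpha)^\sharp=J\Psi(\alpha)^*J$, this gives $\Psi(\alpha^*)=\Psi(\alpha)^*$ in the Hilbert sense. Hence
\[
K_\pm(\alpha\cdot x,y)=V_x^*\Psi(\alpha)^*J^\pm V_y=V_x^*J^\pm\Psi(\alpha^*)V_y=K_\pm(x,\alpha^*\cdot y).
\]
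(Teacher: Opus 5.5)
Your proposal is correct. For the one substantive step, (IKL3) in (2)$\Rightarrow$(4), you take a more explicit route than the paper.

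\textbf{The paper's route.} The paper asserts that invariance of $K$ and $L$ makes $\Psi(\alpha^*)$ simultaneously the Krein adjoint and the Hilbert adjoint of $\Psi(\alpha)$, the latter with respect to $\langle\cdot,\cdot\rangle_{J_{d(\alpha)}}$ and $\langle\cdot,\cdot\rangle_{J_{c(\alpha)}}$. It then reads off $\Psi(\alpha)J_{d(\alpha)}=J_{c(\alpha)}\Psi(\alpha)$ from $\Psi(\alpha)^\sharp=J_{d(\alpha)}\Psi(\alpha)^*J_{c(\alpha)}$. That is short, but the $J$-adjointness is not immediate. On the dense part of $\cK_s$, the inner product $\langle\cdot,\cdot\rangle_{J_s}$ is $\langle |G_{K^s}|\cdot,\cdot\rangle_{L^s}$, not $\langle\cdot,\cdot\rangle_{L^s}$. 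So one actually needs $|G_{K^{c(\alpha)}}|\Psi(\alpha)=\Psi(\alpha)|G_{K^{d(\alpha)}}|$.

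\textbf{What your route supplies.} You first get $G_{K^{c(\alpha)}}\Psi(\alpha)=\Psi(\alpha)G_{K^{d(\alpha)}}$ on the spaces $\cG_s$, from \eqref{e:sepsi} together with the $L$-adjointness of $\Psi(\alpha)$ and $\Psi(\alpha^*)$. The Borel functional calculus then transfers this intertwining to $\operatorname{sgn}$ and to $|\cdot|^{1/2}$, which is exactly the missing fact. Working in the model $\cB_{G_{K^s}}$ also identifies the lifted operators as restrictions of the bounded operators on the $\cG_s$. This makes multiplicativity, the $\sharp$-relation, $\Psi(\alpha)V_x=V_{\alpha\cdot x}$, and commutation with the $J_s$ transparent.

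\textbf{Two small points.}
\begin{enumerate}
\item The device you half-name, $\begin{bmatrix}A&T^*\\ T&B\end{bmatrix}$, is not the right one. The clean argument is that $A\oplus B$ commutes with $\begin{bmatrix}0&0\\ T&0\end{bmatrix}$, hence so do all its spectral projections. Alternatively: polynomials, then Weierstrass approximation, then bounded pointwise limits in the strong operator topology.
\item In (4)$\Rightarrow$(3)$^\prime$, the computation in Theorem~\ref{t:invariantukbp} only bounds the sum $K_++K_-$. Statement (3)$^\prime$ requires each $K_\pm$ separately to be of bounded shift type. For this, use $\Psi(\alpha)J^\pm_{d(\alpha)}=J^\pm_{c(\alpha)}\Psi(\alpha)$, which you already derived from (IKL3). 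It gives $\|J^\pm_{c(\alpha)}\Psi(\alpha)v\|^2\leq\|\Psi(\alpha)\|^2\|J^\pm_{d(\alpha)}v\|^2$, with the norms taken in the $J$-Hilbert spaces.
\end{enumerate}
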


\begin{proof} From previous results, the equivalence of (1), (2), (3) and (3)$^\prime$ are clear. Also, the 
equivalence of item (4) and (5) is clear.

(2)$\Ra$(4). Let $L\in\fK^+_{P_a}(X;\bH)$ be $\Gamma$-invariant and of bounded shift type and 
such that, for each $s\in S$, the inequality \eqref{e:fegalp} holds. We use the constructions and the notation as in the proof of the 
implication (2)$\Ra$(4) in Theorem~\ref{t:invariantukbp}. For each $s\in S$, we consider the Gram operator $G_{K^s}$ of the 
indefinite inner product $[\cdot,\cdot]_{K^s}$ with respect to the positive semidefinite inner product 
$\langle\cdot,\cdot\rangle_{L^s}$ and let 
\begin{equation*}
G_{K^s}=J_{G_{K^s}} |G_{K^s}|,
\end{equation*}
be its polar decomposition with respect to the Hilbert space $(\cG_s;\langle\cdot,\cdot\rangle_{L^s})$, where 
$J_{G_{K^s}}$ is a selfadjoint partial isometry. By the construction of the Krein space induced by $G_{K^s}$, 
see Example~\ref{ex:kas}, $J_{G_{K^s}}$ induces a fundamental symmetry on $\cK_s$, that we denote by $J_s$. 
It only remains to prove that property (IKL3) holds. To see this, let us observe that, since both $K$ and $L$ are 
$\Gamma$-invariant, for each $\alpha\in\Gamma$, the shift operators $\Psi(\alpha)$ and $\Psi(\alpha^*)$ are adjoints one to 
each other, with respect to the pair of indefinite inner products $[\cdot,\cdot]_{\cK^{d(\alpha)}}$ and 
$[\cdot,\cdot]_{\cK^{c(\alpha)}}$, and the pair of positive definite inner products 
$\langle\cdot,\cdot\rangle_{J_{d(\alpha)}}$ and $\langle\cdot,\cdot\rangle_{J_{c(\alpha)}}$, which imply that 
$\Psi(\alpha)J_{d(\alpha)}=J_{c(\alpha)} \Psi(\alpha)$. From here we get that
\begin{equation*}
\Psi(\alpha^*)=\Psi(\alpha)^*=J_{d(\alpha)} \Psi(\alpha)^* J_{c(\alpha)},
\end{equation*}
from which, by taking the adjoints, we get $\Psi(\alpha)J_{d(\alpha)}=J_{c(\alpha)} \Psi(\alpha)$.

(4)$\Ra$(3)$^\prime$. We use the notation from the proof of the implication (4)$\Ra$(3)$^\prime$ in 
Theorem~\ref{t:invariantukbp}. With definition of $K_+$ as in \eqref{e:kpm}, for arbitrary $\alpha\in\Gamma$,
$x\in X_{d(\alpha)}$, and $y\in X_{c(\alpha)}$, we have
\begin{align*}
K_+(\alpha\cdot x,y) & =V(\alpha\cdot x)^* J_{c(\alpha)} V(y)=V(x)^* \Psi(\alpha)^*J_{c(\alpha)} \\
& =V(x)^* J_{d(\alpha)} \Psi(\alpha)^* 
V(y) = V(x)^*J_{d(\alpha)} V(\alpha^*\cdot y)=K_+(x,\alpha^*\cdot y).
\end{align*}
Similarly, we prove that $K_-(\alpha\cdot x,y)=K_-(x,\alpha^*\cdot y)$. Thus, both $K_+$ and $K_-$ are $\Gamma$-invariant.
\end{proof}
\bigskip

\textbf{Declarations}
\bigskip

\textbf{Funding.} The author did not receive support from any organisation for the submitted work.\medskip

\textbf{Competing Interest.} The authors have no relevant financial or non-financial interests to disclose.\medskip

\textbf{Data.} No data is associated with the research of the submitted work.\medskip

\textbf{Work.} The author did all the work and no tool of Artificial Intelligence was used at any stage.

\end{document}